\newcommand\C{\mathbb{C}}
\newcommand\Z{\mathbb{Z}}
\newcommand\Q{\mathbb{Q}}
\newcommand\N{\mathbb{N}}
\newcommand\id{\mathrm{id}}
\newcommand\rat{\mathrm{rat}}
\newcommand\ev{\mathrm{ev}}
\newcommand\g{\mathfrak{g}}
\newcommand\fm{\mathfrak{m}}
\newcommand\bx{\mathbf{x}}
\newcommand\bT{\mathbf{T}}
\newcommand\bU{\mathbf{U}}
\newcommand\cF{\mathcal{F}}
\newcommand\cS{\mathcal{S}}
\newcommand\cE{\mathcal{E}}
\newcommand{\ts}{\textstyle}
\DeclareMathOperator{\Hom}{Hom}
\DeclareMathOperator{\Ext}{Ext}
\DeclareMathOperator{\End}{End}
\DeclareMathOperator{\Aut}{Aut}
\DeclareMathOperator{\Int}{Int}
\DeclareMathOperator{\Out}{Out}
\DeclareMathOperator{\Spec}{Spec}
\DeclareMathOperator{\maxSpec}{maxSpec}
\DeclareMathOperator{\Supp}{Supp} 
\DeclareMathOperator{\wt}{wt}
\DeclareMathOperator{\height}{ht}
\DeclareMathOperator{\mult}{mult}
\theoremstyle{plain}
\newtheorem{theorem}{Theorem}[section]
\newtheorem{proposition}[theorem]{Proposition}
\newtheorem{lemma}[theorem]{Lemma}
\newtheorem{corollary}[theorem]{Corollary}
\theoremstyle{definition}
\newtheorem{definition}[theorem]{Definition}
\newtheorem*{remark*}{Remark}
\newtheorem{remark}[theorem]{Remark}
\newtheorem{example}[theorem]{Example}
\numberwithin{equation}{section}
\begin{document}
%

\title[Local Weyl modules for EMAs with free abelian group actions]{Local Weyl modules for equivariant map algebras \\ with free abelian group actions}

\author{Ghislain Fourier}
\address{G.~Fourier: Mathematisches Institut der Universit\"at zu K\"oln}
\email{gfourier@math.uni-koeln.de}
\author{Tanusree Khandai}
\address{T.~Khandai:}
\email{p.tanusree@gmail.com}
\author{Deniz Kus}
\address{D.~Kus: Mathematisches Institut der Universit\"at zu K\"oln}
\email{dkus@math.uni-koeln.de}
\author{Alistair Savage}
\address{A.~Savage: Department of Mathematics and Statistics, University of Ottawa}
\email{alistair.savage@uottawa.ca}
\thanks{The first and the third authors were partially sponsored by the DFG-Schwerpunktprogramm 1388 ``Darstellungstheorie''.  The research of the fourth author was supported by a Discovery Grant from the Natural Sciences and Engineering Research Council of Canada.}

\subjclass[2010]{17B10, 17B65}


\begin{abstract}
Suppose a finite group $\Gamma$ acts on a scheme $X$ and a finite-dimensional Lie algebra $\g$.  The associated \emph{equivariant map algebra} is the Lie algebra of equivariant regular maps from $X$ to $\g$.  Examples include generalized current algebras and (twisted) multiloop algebras.

Local Weyl modules play an important role in the theory of finite-dimensional representations of loop algebras and quantum affine algebras.  In the current paper, we extend the definition of local Weyl modules (previously defined only for generalized current algebras and twisted loop algebras) to the setting of equivariant map algebras where $\g$ is semisimple, $X$ is affine of finite type, and the group $\Gamma$ is abelian and acts freely on $X$.  We do so by defining \emph{twisting} and \emph{untwisting functors}, which are isomorphisms between certain categories of representations of equivariant map algebras and their untwisted analogues.  We also show that other properties of local Weyl modules (e.g.\ their characterization by homological properties and a tensor product property) extend to the more general setting considered in the current paper.
\end{abstract}

\maketitle \thispagestyle{empty}

\tableofcontents

%
\section*{Introduction}
%

Partially because of their importance in the theory of quantum affine Lie algebras, loop algebras $\g \otimes \C[t,t^{-1}]$, where $\g$ is a semisimple Lie algebra, have been the subject of intense study over the last two decades.  Their representation theory is particularly interesting because the category of finite-dimensional representations is not semisimple.  In \cite{C85,CP86}, it was shown that the irreducible objects in these categories are highest weight in a suitable sense, and a classification was given in terms of these highest weights, which are $n$-tuples of polynomials, where $n$ is the rank of $\g$.  In \cite{CP01}, it was shown that to each such $n$-tuple of polynomials $\pi$, there exists a unique largest highest weight module $W(\pi)$ of highest weight $\pi$. The modules $W(\pi)$, called \emph{(local) Weyl modules} by analogy with the modular representation theory of (the positive characteristic version of) $\g$, have the property that any finite-dimensional highest weight module of highest weight $\pi$ is a quotient of $W(\pi)$.

Weyl modules for loop algebras also play an important role in the representation theory of quantum affine algebras.  In particular, under a natural condition on their highest weight, the irreducible finite-dimensional representations of quantum affine algebras specialize at $q=1$ to representations of the loop algebras.  In this limit, the representations are no longer irreducible, but are quotients of the corresponding local Weyl module.  It was conjectured (and proved for $\g = \mathfrak{sl}_2$) in \cite{CP01} that all local Weyl modules are obtained as $q=1$ limits of irreducible finite-dimensional modules of quantum affine algebras.  In particular, this conjecture implies that the local Weyl modules are the classical limits of the standard modules defined by Nakajima in \cite{N01} and further studied by Varagnolo and Vasserot in \cite{VV02}.

In \cite{CP01}, Chari and Pressley defined the \emph{global Weyl modules} associated to dominant integral weights of $\g$.  These are the largest integrable highest weight modules of the given highest weight and were conjectured to be free modules for a certain commutative algebra.  This motivated a series of papers \cite{BN04,CL06,CM04,FoL07,N01,Na10} on local Weyl modules which computed their dimension and character, identified them with tensor products of Demazure modules, and eventually lead to the proof of this conjecture as well as the aforementioned conjecture that all local Weyl modules are $q=1$ limits of irreducible finite-dimensional modules of quantum affine algebras (for an arbitrary simple $\g$).

In \cite{FL04}, Feigin and Loktev extended the notion of global Weyl modules to the setting of generalized current algebras $\g\otimes A$, where $A$ is a commutative associative unital algebra over the complex numbers.  In the case that $A$ is the coordinate ring of an affine variety, they also extended the definition of local Weyl modules and obtained analogues of some of the results of \cite{CP01}.  In particular, they proved that these modules are finite-dimensional and that every local Weyl module is the tensor product of local Weyl modules associated to a single point (a property which is also true for finite-dimensional irreducible modules).

Motivated by the methods used to study the BGG-category $\mathcal O$ for semisimple Lie algebras, a functorial approach to the study of the Weyl modules for generalized current algebras was adopted in \cite{CFK}.  There it was shown that, via homological properties, one can naturally define more general Weyl modules for the Lie algebra $\g\otimes A$, where $A$ is a commutative associative unital algebra over the complex numbers.  This is done by defining the \emph{Weyl functor} from a suitable category of modules for a commutative algebra $\mathbf{A}_{\lambda}$ (these modules play the role of highest weight spaces) to the category of integrable modules for $\g \otimes A$ with weights bounded by a dominant integrable weight $\lambda$ of $\g$.  Under the condition that $A$ is finitely generated, it was shown that every local Weyl module is finite-dimensional.  Furthermore, the translation of the universal property of the Weyl module into the language of homological algebra yielded a simplified proof of the tensor product property.

The algebras mentioned above all are ``untwisted''.  There are natural twisted versions of loop algebras, related to the twisted affine Lie algebras.  More precisely, the twisted loop algebras are fixed point subalgebras of untwisted loop algebras $\g \otimes \C[t,t^{-1}]$ under the action of certain finite-order automorphisms.  Extending the ideas of \cite{CP01}, local Weyl modules for the twisted loop algebras were defined and studied in \cite{CFS08}, where it was realized that they can be identified with suitably chosen local Weyl modules for untwisted loop algebras.  It is thus natural to ask if twisted versions of local Weyl modules exist when one moves from loop algebras to the more general setting of generalized current algebras.

The twisted analogues of generalized current algebras are \emph{equivariant map algebras}.  Suppose $X = \Spec A$ is an affine scheme and $\g$ is a finite-dimensional Lie algebra, both defined over an algebraically closed field of characteristic zero, and that $\Gamma$ is a finite group acting on both $X$ (equivalently, on $A$) and $\g$ by automorphisms.  Then the equivariant map algebra $(\g \otimes A)^\Gamma$ is the Lie algebra of equivariant algebraic maps from $X$ to $\g$.  In the current paper, we will assume that $\g$ is semisimple, $X$ is of finite type, $\Gamma$ is abelian, and $\Gamma$ acts freely on $X$.  Even with these restrictions, equivariant map algebras are a large class of Lie algebras that include the above mentioned examples of (twisted) loop algebras and generalized current algebras as well as many others.

A complete classification of the irreducible finite-dimensional representations of an equivariant map algebra was given in \cite{NSS}.  Let $X_*$ denote the set of finite subsets of $X_\rat$, the set of rational points of $X$, that do not contain two points in the same $\Gamma$-orbit.  For $\bx \in X_*$, we have a surjective \emph{evaluation map}
\[ \textstyle
  \ev^\Gamma_\bx : (\g \otimes A)^\Gamma \to \g^\bx = \bigoplus_{x \in \bx} \g.
\]
An \emph{evaluation representation} is a representation of the form $\rho \circ \ev^\Gamma_\bx$, where $\rho = \bigotimes_{x \in \bx} \rho_x$ for representations $\rho_x : \g \to \End V_x$, $x \in \bx$.  In the setup of the current paper, the classification of \cite{NSS} says that all irreducible finite-dimensional representations are evaluation representations.  We define the \emph{support} of an irreducible finite-dimensional representation to be $\bigcup (\Gamma \cdot x)$, where the union is over the $x \in \bx$ such that $\rho_x$ is nontrivial.  For an arbitrary finite-dimensional representation, we define its support to be the union of the supports of its irreducible constituents.  This support depends only on the isomorphism class of the representation.

For an equivariant map algebra, one is not assured of the existence of a semisimple fixed point subalgebra $\g^\Gamma$ or a Cartan subalgebra of $(\g \otimes A)^\Gamma$ in the classical sense.  Since past approaches to the study of Weyl modules for twisted loop algebras rely heavily on the representation theory of $\g^\Gamma$, this is a major obstacle to generalizing such techniques to the more general setting of equivariant map algebras.  Furthermore, owing to the unavailability of the classical notion of weights for $(\g \otimes A)^\Gamma$-modules, the notion of highest weight  modules is not clear in this context. For these reasons, new techniques are needed.

Let $\cF$ and $\cF^\Gamma$ denote the category of finite-dimensional $(\g \otimes A)$-modules and $(\g \otimes A)^\Gamma$-modules respectively.  For $\bx \in X_*$, let $\cF_\bx$ (respectively $\cF^\Gamma_\bx$) denote the full subcategory of $\cF$ (respectively $\cF^\Gamma$) consisting of modules with support contained in $\bx$ (respectively $\Gamma \cdot \bx$).  Motivated by \cite{CFS08,Lau10,NSS} we define, for each $\bx \in X_*$, mutually inverse isomorphisms of categories
\[
  \xymatrix{
    \cF_\bx \ar@<1ex>[rr]^{\bT_\bx} & & \cF^\Gamma_\bx \ar@<1ex>[ll]^{\bU_\bx}
  }
\]
called \emph{twisting} and \emph{untwisting functors} (see Theorem~\ref{theo:category-equivalence}).  These functors allow us to move back and forth at will between the theory of finite-dimensional representations of equivariant map algebras (satisfying the assumptions of the current paper) and the corresponding theory for generalized current algebras.  In particular, to any irreducible finite-dimensional $(\g \otimes A)^\Gamma$-module $V$, we can associate a twisted local Weyl module as follows.  Let $\bx \in X_*$ contain one point in each $\Gamma$-orbit in the support of $V$.  Then $\bU_\bx V$ is an irreducible finite-dimensional $(\g \otimes A)$-module, to which is associated an (untwisted) local Weyl module $W$.  We then define the local Weyl module associated to $V$ to be $\bT_\bx W$, and one can show that this definition is independent of the choice of $\bx$ (see Proposition~\ref{prop:Weyl-restriction-independence}).

Apart from their role in the definition of the twisted local Weyl modules, the twisting and untwisting functors also allow us to use the characterization of local Weyl modules by homological properties given in \cite{CFK} to give a similar characterization of twisted local Weyl modules.  However, some subtlety is involved here.  The homological characterization given in \cite{CFK} involves certain categories of highest weight modules.  Since the Cartan subalgebra of $\g$ is not necessarily preserved by the action of the group $\Gamma$, such methods do not immediately carry over to the twisted setting.  In order to circumvent this problem, we replace the usual order on weights by another partial order arising from a suitably defined \emph{height} function on the weight lattice.  Our modified homological characterization is equivalent to the one given in \cite{CFK}, but has the advantage that it carries over to the twisted versions.

There are several natural questions arising from our treatment of local Weyl modules for equivariant map algebras.  For instance, can one define global Weyl modules (see \cite{CP01, CFK}) and is there an analogue of the algebra $\mathbf{A}_\lambda$ defined in \cite{CFK}?  Can one extend the results of the current paper to the case where the group $\Gamma$ does not act freely on $X$?  It would also be interesting to further examine the relationship between the twisting and untwisting functors defined here and connections between the representation theory of twisted and untwisted quantum affine algebras appearing in the literature (see, for example, \cite{Her10}).

The paper is organized as follows.  In Section~\ref{sec:ema-background} we recall the definition of equivariant map algebras and certain results on their finite-dimensional irreducible representations.  We introduce the twisting and untwisting functors in Section~\ref{sec:cat-equivalence} and prove that they are isomorphisms of categories.  In Section~\ref{sec:local-weyl-modules} we recall the results on local Weyl modules for generalized current algebras and then introduce the notion of local Weyl modules for equivariant map algebras.  We also show there that they satisfy a natural tensor product property.  Finally, in Section~\ref{sec:homological-properties} we give a characterization of the local Weyl modules by homological properties.

\subsection*{Acknowledgements}

The authors would like to thank E.~Neher for useful discussions.  The first, third, and fourth authors would also like to thank the Hausdorff Research Institute for Mathematics and the organizers of the Trimester Program on the Interaction of Representation Theory with Geometry and Combinatorics, during which the ideas in the current paper were developed.  The fourth author would like to thank the Institut de Math\'ematiques de Jussieu and the D\'epartement de Math\'ematiques d'Orsay for their hospitality during his stays there, when some of the writing of the current paper took place.

%
\section{Equivariant map algebras and their irreducible representations}
\label{sec:ema-background}
%

In this section, we review the definition of equivariant map algebras and the classification of their irreducible finite-dimensional representations given in \cite{NSS}.  Let $k$ be an algebraically closed field of characteristic zero and $A$ be unital associative commutative finitely generated $k$-algebra.  We let $X = \Spec A$, the prime spectrum of $A$ (so $X$ is an affine scheme of finite type).  A point $x \in X$ is called a \emph{rational point} if $A/\mathfrak{m}_x \cong k$, where $\mathfrak{m}_x$ is the ideal corresponding to $x$.  We denote the subset of rational points of $X$ by $X_\rat$.  Since $A$ is finitely generated, we have $X_\rat = \maxSpec A$. Suppose $\Gamma$ is a finite abelian group acting on $X$ (equivalently, on $A$) and on a semisimple Lie algebra $\g$ by automorphisms.  Let $\g \otimes A$ be the Lie $k$-algebra of regular maps from $X$ to $\g$.  This is a Lie algebra under pointwise multiplication.  The \emph{equivariant map algebra} $(\g \otimes A)^\Gamma$ consists of the $\Gamma$-fixed points of the canonical (diagonal) action of $\Gamma$ on $\g \otimes A$.  Thus $(\g \otimes A)^\Gamma$ is the subalgebra of $\Gamma$-equivariant maps.  In the current paper, we are interested in the case that $\Gamma$ acts freely on $X$, by which we mean that it acts freely on $X_\rat$.  We shall assume this is the case for the entirety of the paper.  Following the usual abuse of notation, we will use the terms `module' and `representation' interchangeably.

\begin{remark}
  We could consider the more general case where $\g$ is finite-dimensional reductive.  However, then $(\g \otimes A)^\Gamma \cong ([\g,\g] \otimes A)^\Gamma \oplus (Z(\g) \otimes A)^\Gamma$ as Lie algebras, \cite[(3.4)]{NS11}, where $[\g,\g]$ is semisimple and $Z(\g)$ is the centre of $\g$ (and so $(Z(\g) \otimes A)^\Gamma$ is an abelian Lie algebra).  The representation theory of $(\g \otimes A)^\Gamma$ thus essentially ``splits'' and so it suffices to consider the case of $\g$ semisimple.  See \cite{NS11} for details.
\end{remark}

We denote by $X_*$ the set of finite subsets $\bx\subseteq X_\rat$ for which $\Gamma \cdot x \cap \Gamma \cdot x' = \varnothing$ for distinct $x,x'\in \bx$. For $\bx\in X_*$, we define $\g^\bx = \bigoplus_{x\in \bx}\, \g$.  The evaluation map
\[
  \ev^\Gamma_\bx : (\g \otimes A)^\Gamma \to \g^\bx,\quad \ev^\Gamma_\bx (\alpha) =  (\alpha(x))_{x\in \bx},
\]
is a Lie algebra epimorphism \cite[Cor.~4.6]{NSS}.  To $\bx \in X_*$ and a set $\{\rho_x: x \in \bx\}$ of (nonzero) representations $\rho_x : \g \to \End_k V_x$, we associate the \emph{evaluation representation} $\ev^\Gamma_\bx(\rho_x)_{x\in \bx}$ of $(\g \otimes A)^\Gamma$, defined as the composition
\[ \ts
  (\g \otimes A)^\Gamma \xrightarrow{\ev^\Gamma_\bx} \g^\bx \xrightarrow{\bigotimes_{x \in \bx} \rho_x} \End_k \left( \bigotimes_{x \in \bx} V_x \right).
\]
If all $\rho_x, x\in \bx$, are irreducible finite-dimensional representations, then this is also an irreducible finite-dimensional representation of $(\g \otimes A)^\Gamma$, \cite[Prop.~4.9]{NSS}. The \emph{support}
of an evaluation representation $V= \bigotimes_{x \in \bx} V_x$, abbreviated $\Supp V$, is the union of all $\Gamma \cdot x$, $x \in \bx$, for which $\rho_x$ is not the one-dimensional trivial representation of $\g$.

Fix a triangular decomposition $\g = \mathfrak{n}^- \oplus \mathfrak{h} \oplus \mathfrak{n}^+$ and a set of simple roots for $\g$.  Let $P$ and $Q$ be the corresponding weight and root lattices respectively, and let $P^+$ denote the set of dominant integral weights.  For $\lambda \in P^+$, let $V(\lambda)$ be the corresponding irreducible representation of $\g$ of highest weight $\lambda$.  In this way we identify the set of isomorphism classes of irreducible finite-dimensional $\g$-modules with $P^+$.

It is well known that $\Aut \g \cong \Int \g \rtimes \Out \g$, where $\Int \g$ is the group of inner automorphisms of $\g$ and $\Out \g$ is the group of diagram automorphisms of $\g$.  The diagram automorphisms act naturally on $P$, $Q$, and $P^+$.  If $\rho$ is an irreducible representation of $\g$ of highest weight $\lambda \in P$ and $\gamma$ is an automorphism of $\g$, then $\rho \circ \gamma^{-1}$ is the irreducible representation of $\g$ of highest weight $\gamma_{\mathrm{Out}} \cdot \lambda$, where $\gamma_{\mathrm{Out}}$ is the outer part of the automorphism $\gamma$ (see \cite[VIII, \S7.2, Rem.~1]{Bou75}).  So the group $\Gamma$ acts naturally on each $P^+$ via the quotient $\Aut \g \twoheadrightarrow \Out \g$.

Let $\cE$ denote the set of finitely supported functions $\psi : X_\rat \to P^+$ and let $\cE^\Gamma$ denote the subset of $\cE$ consisting of those functions which are $\Gamma$-equivariant. Here the support of $\psi \in \cE$ is
\[
  \Supp \psi = \{x \in X_\rat\ |\ \psi(x) \ne 0\}.
\]

If $\bx \in X_*$ and $\rho_x,\rho_x'$ are isomorphic representations of $\g$ for each $x \in \bx$, the evaluation representations $\ev^\Gamma_\bx (\rho_x)_{x \in \bx}$ and $\ev^\Gamma_\bx (\rho_x')_{x \in \bx}$ are isomorphic. Therefore, for $\bx \in X_*$ and representations $\rho_x$ of $\g^x$ for $x \in \bx$, we define $\ev^\Gamma_\bx ([\rho_x])_{x \in \bx}$ to be the isomorphism class of $\ev^\Gamma_\bx (\rho_x)_{x \in \bx}$. (Here $[\rho_x]$, $x \in \bx$, denotes the isomorphism class of the representation $\rho_x$.)

For $\psi \in \cE^\Gamma$, we define $\ev^\Gamma_\psi = \ev^\Gamma_\bx (\psi(x))_{x \in \bx}$, where $\bx \in X_*$ contains one element of each $\Gamma$-orbit in $\Supp \psi$.  By \cite[Lem.~4.13]{NSS}, $\ev^\Gamma_\psi$ is independent of the choice of $\bx$.  If $\psi$ is the map that is identically 0 on $X$, we define $\ev^\Gamma_\psi$ to be the isomorphism class of the trivial representation of $(\g \otimes A)^\Gamma$.  We say that an evaluation representation is a \emph{single orbit evaluation representation} if its isomorphism class is $\ev^\Gamma_\psi$ for some $\psi \in \cE^\Gamma$ whose support is contained in a single $\Gamma$-orbit.  For all of the above notation, we drop the superscript $\Gamma$ when $\Gamma = \{1\}$.  For instance, for a finite subset $\bx \subseteq X_\rat$, $\ev_\bx : \g \otimes A \to \g^\bx$ is the corresponding evaluation map.  Similarly, for $\psi \in \cE$, $\ev_\psi$ is the corresponding isomorphism class of representations of $\g \otimes A$.

\begin{proposition}[{\cite[Th.~5.5]{NSS}}] \label{prop:irred-classification}
  The map
  \[
    \cE^\Gamma \to \cS^\Gamma,\quad \psi \mapsto \ev^\Gamma_\psi,\quad \psi \in \cE^\Gamma,
  \]
  is a bijection, where $\cS^\Gamma$ denotes the set of isomorphism classes of irreducible finite-dimensional representations of $(\g \otimes A)^\Gamma$.  In particular, all irreducible finite-dimensional representations of $(\g \otimes A)^\Gamma$ are evaluation
  representations.
\end{proposition}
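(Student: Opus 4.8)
The plan is to treat the two halves of the statement separately: \emph{injectivity} of $\psi\mapsto\ev^\Gamma_\psi$, and the assertion that \emph{every} irreducible finite-dimensional representation of $L:=(\g\otimes A)^\Gamma$ arises this way (recall each $\ev^\Gamma_\psi$ is already known to be irreducible and finite-dimensional by \cite[Prop.~4.9]{NSS}). The second half is the substantial one. For injectivity, suppose $\ev^\Gamma_\psi\cong\ev^\Gamma_{\psi'}$. The support of an evaluation representation depends only on its isomorphism class and equals $\Supp\psi$ for $\ev^\Gamma_\psi$, so $\Supp\psi=\Supp\psi'=:S$, a $\Gamma$-stable finite subset of $X_\rat$. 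I would choose $\bx\subseteq S$ meeting each $\Gamma$-orbit in $S$ exactly once; then $\bx\in X_*$, and by independence of the choice of orbit representatives (\cite[Lem.~4.13]{NSS}) one has $\ev^\Gamma_\psi=\ev^\Gamma_\bx(\psi(x))_{x\in\bx}$ and likewise for $\psi'$. Both factor through the epimorphism $\ev^\Gamma_\bx\colon L\twoheadrightarrow\g^\bx$, so the given $L$-module isomorphism is an isomorphism of $\g^\bx$-modules $\bigotimes_{x\in\bx}V(\psi(x))\xrightarrow{\sim}\bigotimes_{x\in\bx}V(\psi'(x))$; restricting the action to the $x$-th summand $\g$ of $\g^\bx=\bigoplus_{x\in\bx}\g$ and comparing isotypic components forces $V(\psi(x))\cong V(\psi'(x))$, i.e.\ $\psi(x)=\psi'(x)$, for every $x\in\bx$. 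Finally, an arbitrary $z\in X_\rat$ either lies outside $S$, where $\psi(z)=0=\psi'(z)$, or equals $\gamma\cdot x$ for unique $x\in\bx$ and $\gamma\in\Gamma$, and then $\Gamma$-equivariance gives $\psi(z)=\gamma\cdot\psi(x)=\gamma\cdot\psi'(x)=\psi'(z)$; hence $\psi=\psi'$.

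For surjectivity, let $V$ be an irreducible finite-dimensional $L$-module with kernel $K$, an ideal of finite codimension. The key step would be to produce a $\Gamma$-stable cofinite ideal $\mathfrak a\subseteq A$ with $(\g\otimes\mathfrak a)^\Gamma\subseteq K$. Since $\Gamma$ is abelian and $k$ has characteristic zero, the $\Gamma$-actions on $\g$ and on $A$ are diagonalizable, giving eigenspace decompositions $\g=\bigoplus_{\xi\in\widehat\Gamma}\g_\xi$ and $A=\bigoplus_{\xi\in\widehat\Gamma}A_\xi$, so that $L=\bigoplus_\xi\g_\xi\otimes A_{\xi^{-1}}$. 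For a basis vector $x$ of $\g_\xi$ the $k$-linear map $A_{\xi^{-1}}\to L/K$, $a\mapsto\overline{x\otimes a}$, has finite-dimensional target, hence cofinite kernel; intersecting over a basis of $\g_\xi$ shows that $\mathfrak a'_\xi:=\{a\in A_{\xi^{-1}}:\g_\xi\otimes a\subseteq K\}$ is cofinite in $A_{\xi^{-1}}$. Because $\Gamma$ acts freely, $A$ is strongly $\widehat\Gamma$-graded (the extension $A^\Gamma\subseteq A$ is $\widehat\Gamma$-Galois), which forces the homogeneous ideal $A\mathfrak a'_\xi$ to be cofinite in $A$; then $\mathfrak a:=\bigcap_\xi A\mathfrak a'_\xi$ is a cofinite $\Gamma$-stable ideal of $A$, and since $\mathfrak a\cap A_{\xi^{-1}}\subseteq A\mathfrak a'_\xi\cap A_{\xi^{-1}}=\mathfrak a'_\xi$ for every $\xi$, one gets $(\g\otimes\mathfrak a)^\Gamma=\bigoplus_\xi\g_\xi\otimes(\mathfrak a\cap A_{\xi^{-1}})\subseteq K$.

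Granting $\mathfrak a$, the module $V$ descends to the finite-dimensional Lie algebra $(\g\otimes A/\mathfrak a)^\Gamma$ ($\Gamma$-invariants being exact in characteristic zero). As $\mathfrak a$ is $\Gamma$-stable and cofinite and $\Gamma$ acts freely, $\Spec(A/\mathfrak a)$ is a finite disjoint union of free $\Gamma$-orbits $\Gamma\cdot y_1,\dots,\Gamma\cdot y_m$ of (possibly non-reduced) rational points, and forming $\Gamma$-invariants retains a single fat point from each orbit: $(\g\otimes A/\mathfrak a)^\Gamma\cong\bigoplus_{j=1}^m\g\otimes\cO_{y_j}$ with each $\cO_{y_j}$ local Artinian — and this is where freeness is essential. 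For each $j$ the radical $\g\otimes\mathfrak m_{y_j}$, with $\mathfrak m_{y_j}$ the maximal ideal of $\cO_{y_j}$, satisfies $[\g\otimes\cO_{y_j},\,\g\otimes\mathfrak m_{y_j}]=\g\otimes\mathfrak m_{y_j}$ (because $[\g,\g]=\g$ and $\cO_{y_j}\mathfrak m_{y_j}=\mathfrak m_{y_j}$), hence acts by $0$ on any irreducible module. Therefore $V$ factors through $\bigoplus_j\g=\g^\bx$ with $\bx=\{y_1,\dots,y_m\}\in X_*$, and since $\ev^\Gamma_\bx$ is surjective, $V$ is irreducible over $\g^\bx$, hence an outer tensor product $\bigotimes_j V(\lambda_j)$ with $\lambda_j\in P^+$; discarding the $j$ with $\lambda_j=0$ and extending $y_j\mapsto\lambda_j$ to a $\Gamma$-equivariant function on $X_\rat$ (unambiguous, as distinct $y_j$ lie in distinct $\Gamma$-orbits) produces $\psi\in\cE^\Gamma$ with $V\cong\ev^\Gamma_\psi$. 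The main obstacle I expect is the construction of $\mathfrak a$, and within it the structural input — coming from freeness of the $\Gamma$-action — that $\Spec A\to\Spec A^\Gamma$ is an \'etale $\widehat\Gamma$-torsor, equivalently that $A$ is strongly $\widehat\Gamma$-graded: this is what upgrades ``cofinite in each graded piece $A_{\xi^{-1}}$'' to ``cofinite $\Gamma$-stable ideal of $A$'', and it also underlies the orbitwise decomposition $(\g\otimes A/\mathfrak a)^\Gamma\cong\bigoplus_j\g\otimes\cO_{y_j}$. The remaining steps are routine, and the full argument is carried out in \cite[\S5]{NSS}.
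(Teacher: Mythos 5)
The paper does not prove this proposition: it is imported verbatim from \cite[Th.~5.5]{NSS} (cf.\ Remark~\ref{rem:irred-classification}), so your sketch can only be judged on its own terms. Most of it is fine. The injectivity argument works (though ``the support depends only on the isomorphism class'' is itself something to justify at this stage; you can sidestep it by letting $\bx$ contain one point of each $\Gamma$-orbit meeting $\Supp\psi\cup\Supp\psi'$). The appeal to strong gradedness is legitimate: Lemma~\ref{lem:free-actions} with $I=A$ gives $A_\xi A_{\xi'}=A_{\xi+\xi'}$, and a partition-of-unity argument ($1=\sum_i u_iv_i$ with $u_i\in A_{\tau+\xi}$, $v_i\in A_{-\tau-\xi}$) does show that the ideal generated by a cofinite subspace of $A_{-\xi}$ is cofinite in $A$. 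Likewise, \emph{granted} a $\Gamma$-stable cofinite ideal $\mathfrak{a}$ with $(\g\otimes\mathfrak{a})^\Gamma\subseteq K$, the rest of your surjectivity argument (descent to $(\g\otimes A/\mathfrak{a})^\Gamma\cong\bigoplus_j\g\otimes\cO_{y_j}$, killing $\g\otimes\fm_{y_j}$ via Lie's theorem, reading off $\psi\in\cE^\Gamma$) is sound and parallels Proposition~\ref{prop:fundamental-quotient-isom}.

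The genuine gap is the asserted equality $A\mathfrak{a}'_\xi\cap A_{\xi^{-1}}=\mathfrak{a}'_\xi$ (in the paper's additive notation, $A_{\xi^{-1}}=A_{-\xi}$). The degree-$(-\xi)$ component of the ideal generated by the homogeneous subspace $\mathfrak{a}'_\xi\subseteq A_{-\xi}$ is $A^\Gamma\mathfrak{a}'_\xi$, which equals $\mathfrak{a}'_\xi$ only if $\mathfrak{a}'_\xi$ is an $A^\Gamma$-submodule, and nothing in your argument gives that: bracketing $K$ against $\g^\Gamma\otimes A^\Gamma\subseteq L$ only yields $[\g^\Gamma,\g_\xi]\otimes A^\Gamma\mathfrak{a}'_\xi\subseteq K$, and $[\g^\Gamma,\g_\xi]$ can be a proper subspace of $\g_\xi$ --- indeed $\g^\Gamma=0$ for $\g=\mathfrak{sl}_2$ with $\Gamma\cong\Z/2\times\Z/2$ acting by commuting inner involutions, which occurs in the free multiloop setting of Example~\ref{eg:multiloop} with $n=2$, $m_1=m_2=2$. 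So $(\g\otimes\mathfrak{a})^\Gamma\subseteq K$ is not established, and this is exactly the step you yourself flag as the crux before deferring to \cite{NSS}; note you also cannot substitute Proposition~\ref{prop:fd-annihilator}, whose proof invokes the present proposition. A repair must use the whole grading of $\g$: from $\g_\zeta=\sum_\tau[\g_\tau,\g_{\zeta-\tau}]$ and $[\g_\tau\otimes A_{-\tau},\g_{\zeta-\tau}\otimes\mathfrak{a}'_{\zeta-\tau}]\subseteq K$ one gets cofinite subspaces $S_\zeta=\bigcap_\tau A_{-\tau}\mathfrak{a}'_{\zeta-\tau}\subseteq A_{-\zeta}$ with $\g_\zeta\otimes S_\zeta\subseteq K$, but these are still not the graded components of a $\Gamma$-invariant ideal, so a further idea is required --- which is precisely what the cited argument of \cite{NSS} supplies.
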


\begin{remark} \label{rem:irred-classification}
  The classification of irreducible finite-dimensional representations given in \cite{NSS} is much more general than Proposition~\ref{prop:irred-classification}.  In particular, it applies in the case that $\g$ is any finite-dimensional Lie algebra, $\Gamma$ is any finite group (i.e.\ not necessarily abelian), and the action of $\Gamma$ is arbitrary (i.e.\ $\Gamma$ need not act freely on $X$).  In this generality, all irreducible finite-dimensional representations are tensor products of evaluation representations and one-dimensional representations.  However, under the more restrictive assumptions of the current paper, $(\g \otimes A)^\Gamma$ is a perfect Lie algebra (i.e.\ $[(\g \otimes A)^\Gamma,(\g \otimes A)^\Gamma]=(\g \otimes A)^\Gamma$) and so $(\g \otimes A)^\Gamma$ has no nontrivial one-dimensional representations, \cite[Lem.~6.1]{NS11}.
\end{remark}

\begin{definition}[Notation for irreducibles] \label{def:irred-notation}
  For $\psi \in \mathcal{E}^\Gamma$, we let $V_\Gamma(\psi)$ denote the corresponding irreducible representation of $(\g \otimes A)^\Gamma$ (that is, $V_\Gamma(\psi)$ is some irreducible representation in the isomorphism class $\ev^\Gamma_\psi$).  For $\psi \in \mathcal{E}$, we let $V(\psi)$ denote the corresponding irreducible representation of $\g \otimes A$.
\end{definition}

\begin{example}[Untwisted map algebras]
  When the group $\Gamma$ is trivial, $(\g \otimes A)^\Gamma = \g \otimes A$ is called an \emph{untwisted map algebra}, or \emph{generalized current algebra}.  These algebras arise also for a nontrivial group $\Gamma$ acting trivially on $\g$ or on $X$. In the first case we have $(\g \otimes A)^\Gamma \cong \g \otimes A^\Gamma$, and in the second
  $(\g \otimes A)^\Gamma = \g^\Gamma \otimes A$.
\end{example}

\begin{example}[Multiloop algebras] \label{eg:multiloop}
  Fix positive integers $n, m_1, \dots, m_n$.  Let
  \[
    \Gamma = \langle \gamma_1,\dots, \gamma_n : \gamma_i^{m_i}=1,\ \gamma_i \gamma_j = \gamma_j \gamma_i,\ \forall\ 1 \le i,j \le n \rangle \cong \Z/m_1 \Z \times \dots \times \Z/m_n \Z,
  \]
  and suppose that $\Gamma$ acts on $\g$. Note that this is equivalent to specifying commuting automorphisms $\sigma_i$, $i=1,\dots,n$, of $\g$ such that $\sigma_i^{m_i}=\id$. For $i = 1,\dots, n$, let $\xi_i$ be a primitive $m_i$-th root of unity. Let $X=(k^\times)^n$ and define an action of $\Gamma$ on $X$ by
  \[
    \gamma_i \cdot (z_1, \dots, z_n) = (z_1, \dots, z_{i-1}, \xi_i z_i, z_{i+1}, \dots, z_n).
  \]
  Then
  \begin{equation} \label{eq:twisted-multiloop-def}
      M(\g,\sigma_1,\dots,\sigma_n,m_1,\dots,m_n) := (\g \otimes A)^\Gamma
  \end{equation}
  is the \emph{multiloop algebra} of $\g$ relative to $(\sigma_1,
  \dots, \sigma_n)$ and $(m_1, \ldots, m_n)$.
\end{example}

\begin{definition}[$\g$-weights]
  We can identify $\g$ with the subalgebra $\g \otimes k \subseteq \g \otimes A$.  In this way, any $(\g \otimes A$)-module $V$ can be viewed as a $\g$-module.  We will refer to the weights of this $\g$-module as the \emph{$\g$-weights} of $V$ (assuming $V$ has a weight decomposition, e.g.\ $V$ is finite-dimensional).  For a $\g$-weight $\lambda$, we let $V_\lambda$ denote the corresponding weight space of $V$.
\end{definition}

%
\section{Twisting and untwisting functors} \label{sec:cat-equivalence}
%

In this section, we define isomorphisms between certain categories of modules for (untwisted) map algebras $\g \otimes A$ and their equivariant analogues $(\g \otimes A)^\Gamma$.  This isomorphism will be our key tool in defining local Weyl modules in the equivariant setting and proving their characterization via homological properties.

Recall that for a point $x \in X_\rat$, $\fm_x$ denotes the corresponding maximal ideal of $A$.  For $\eta : X_\rat \to \N = \Z_{\ge 0}$ with finite support, define
\begin{equation} \label{eq:I_eta-def} \ts
  I_\eta = \prod_{x \in \Supp \eta} \fm_x^{\eta(x)}.
\end{equation}
For a finite subset $\bx \subseteq X$, we define $I_\bx = I_\eta$, where $\eta(x) = 1$ for $x \in \bx$ and $\eta(x) = 0$ for $x \not \in \bx$.  It is straightforward to check that $\g \otimes I_\eta$ is an ideal of $\g \otimes A$ and so we have a \emph{generalized evaluation map}
\begin{gather*}
  \ts \ev_\eta : \g \otimes A \twoheadrightarrow (\g \otimes A)/(\g \otimes I_\eta) \cong \bigoplus_{x \in \Supp \eta} \g \otimes (A/\fm_x^{\eta(x)}) \cong \bigoplus_{x \in \Supp \eta} (\g \otimes A)/(\g \otimes \fm_x^{\eta(x)}), \\
  \ts \ev_\eta(\alpha) = \bigoplus_{x \in \Supp \eta} (\alpha + (\g \otimes \fm_x^{\eta(x)})).
\end{gather*}
Let
\[ \ts
  \ev_\eta^\Gamma : (\g \otimes A)^\Gamma \to \bigoplus_{x \in \Supp \eta} (\g \otimes A)/(\g \otimes \fm_x^{\eta(x)})
\]
denote the restriction of $\ev_\eta$ to $(\g \otimes A)^\Gamma$.  Clearly
\[
  \ker \ev_\eta^\Gamma = (\ker \ev_\eta) \cap (\g \otimes A)^\Gamma = (\g \otimes I_\eta) \cap (\g \otimes A)^\Gamma = (\g \otimes I_\eta)^\Gamma.
\]

Recall that $X_*$ is the set of finite subsets of $X_\rat$ that do not contain two points in the same $\Gamma$-orbit.

\begin{lemma} \label{lem:ideal-equality}
  If $\eta : X_\rat \to \N$ satisfies $\Supp \eta \in X_*$, then
  \[ \ts
    (\g \otimes I_\eta)^\Gamma = (\g \otimes \tilde I_\eta)^\Gamma, \quad \text{where} \quad
    \tilde I_\eta = \prod_{x \in \Supp \eta} \prod_{\gamma \in \Gamma} \fm_{\gamma \cdot x}^{\eta(x)}.
  \]
\end{lemma}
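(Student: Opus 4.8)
The plan is to prove the two containments $(\g \otimes I_\eta)^\Gamma \subseteq (\g \otimes \tilde I_\eta)^\Gamma$ and $(\g \otimes \tilde I_\eta)^\Gamma \supseteq (\g \otimes I_\eta)^\Gamma$ separately; the second is immediate since $\tilde I_\eta \subseteq I_\eta$ (the product defining $\tilde I_\eta$ includes the factors $\fm_x^{\eta(x)}$ obtained by taking $\gamma = 1$, and each $\fm_{\gamma\cdot x}^{\eta(x)}$ is an ideal, so the whole product lies in $I_\eta$), whence $\g \otimes \tilde I_\eta \subseteq \g \otimes I_\eta$ and the same containment holds after taking $\Gamma$-fixed points. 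So the substance of the lemma is the forward inclusion $(\g \otimes I_\eta)^\Gamma \subseteq (\g \otimes \tilde I_\eta)^\Gamma$.

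For the forward inclusion, the key observation is that the left-hand side is a $\Gamma$-stable set: if $\alpha \in (\g \otimes I_\eta)^\Gamma$, then for every $\gamma \in \Gamma$ we also have $\gamma \cdot \alpha = \alpha$, but computing $\gamma \cdot \alpha$ as an element of $\g \otimes A$ shows it lies in $\g \otimes (\gamma \cdot I_\eta)$, where $\gamma \cdot I_\eta = \prod_{x \in \Supp\eta} \fm_{\gamma \cdot x}^{\eta(x)}$ (since $\gamma$ carries $\fm_x$ to $\fm_{\gamma \cdot x}$). Therefore $\alpha = \gamma\cdot\alpha \in \g \otimes (\gamma\cdot I_\eta)$ for every $\gamma$, so $\alpha$ lies in $\bigcap_{\gamma \in \Gamma} \bigl(\g \otimes (\gamma \cdot I_\eta)\bigr) = \g \otimes \bigl(\bigcap_{\gamma\in\Gamma} \gamma\cdot I_\eta\bigr)$. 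It thus remains to show that $\bigcap_{\gamma \in \Gamma} (\gamma \cdot I_\eta) = \tilde I_\eta$, i.e.\ that the intersection of the ideals $\prod_{x}\fm_{\gamma\cdot x}^{\eta(x)}$ over $\gamma \in \Gamma$ equals their product $\prod_{x}\prod_{\gamma}\fm_{\gamma\cdot x}^{\eta(x)}$.

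The equality of this intersection and product is where the hypothesis $\Supp\eta \in X_*$ and the free action of $\Gamma$ are used: the maximal ideals $\{\fm_{\gamma \cdot x} : x \in \Supp\eta,\ \gamma \in \Gamma\}$ are pairwise distinct (two points $x, x'$ of $\Supp\eta$ lie in distinct $\Gamma$-orbits by definition of $X_*$, and freeness ensures the orbit of each $x$ has exactly $|\Gamma|$ distinct points), hence pairwise comaximal, and $A$ is finitely generated over an algebraically closed field so localizations at distinct maximal ideals behave well. For pairwise comaximal ideals, a product of prime powers supported at distinct maximal ideals equals the intersection of those prime-power ideals — this is the standard CRT-type fact that if $J_1, \dots, J_r$ are pairwise comaximal then $J_1 \cdots J_r = J_1 \cap \dots \cap J_r$, applied after first grouping the factors $\fm_{\gamma\cdot x}^{\eta(x)}$ by the maximal ideal they are supported at. More precisely: $\gamma \cdot I_\eta = \prod_{x} \fm_{\gamma\cdot x}^{\eta(x)} = \bigcap_x \fm_{\gamma\cdot x}^{\eta(x)}$, and then $\bigcap_\gamma \gamma\cdot I_\eta = \bigcap_\gamma \bigcap_x \fm_{\gamma\cdot x}^{\eta(x)} = \prod_\gamma \prod_x \fm_{\gamma \cdot x}^{\eta(x)} = \tilde I_\eta$, again by pairwise comaximality of all the $\fm_{\gamma\cdot x}$. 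I expect the main obstacle to be making the combinatorial bookkeeping of the CRT argument clean — in particular, checking that no two of the maximal ideals $\fm_{\gamma\cdot x}$ coincide (which reduces exactly to $\Supp\eta \in X_*$ together with freeness of the action) and citing or recalling the precise form of the Chinese Remainder Theorem for products versus intersections of ideals in a commutative ring; everything else is routine manipulation of the diagonal $\Gamma$-action on $\g \otimes A$.
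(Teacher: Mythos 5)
Your proof is correct and follows essentially the same route as the paper: the easy inclusion from $\tilde I_\eta \subseteq I_\eta$, then using $\Gamma$-invariance of $\alpha$ to place it in $\g$ tensored with each $\fm_{\gamma\cdot x}^{\eta(x)}$, and finally converting the intersection of powers of pairwise distinct (hence comaximal) maximal ideals into their product. The paper compresses the CRT step into the phrase ``since the ideals $\fm_{\gamma\cdot x}$ are relatively prime,'' which you simply spell out in more detail (your remark about localizations over an algebraically closed field is unnecessary, but harmless).
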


\begin{proof}
  Since $\tilde I_\eta \subseteq I_\eta$, we have $(\g \otimes \tilde I_\eta)^\Gamma \subseteq (\g \otimes I_\eta)^\Gamma$.  Suppose $\alpha \in (\g \otimes I_\eta)^\Gamma$.  Then for each $x \in \Supp \eta$ and $\gamma \in \Gamma$, we have
  \[
    \alpha \in (\g \otimes I_\eta)^\Gamma \subseteq \g \otimes I_\eta \subseteq \g \otimes \fm_x^{\eta(x)} \implies \alpha = \gamma \cdot \alpha \in \gamma (\g \otimes \fm_x^{\eta(x)}) = \g \otimes \fm_{\gamma \cdot x}^{\eta(x)}.
  \]
  Thus
  \[ \ts
    (\g \otimes I_\eta)^\Gamma \subseteq \g \otimes \bigcap_{x \in \Supp \eta} \bigcap_{\gamma \in \Gamma} \fm_{\gamma \cdot x}^{\eta(x)} = \g \otimes \tilde I_\eta,
  \]
  since the ideals $\fm_{\gamma \cdot x}$ are relatively prime.  Thus $(\g \otimes I_\eta)^\Gamma \subseteq (\g \otimes \tilde I_\eta)^\Gamma$.
\end{proof}

\begin{proposition} \label{prop:fundamental-quotient-isom}
  If $\eta : X_\rat \to \N$ satisfies $\Supp \eta \subseteq X_*$, then the map $\ev_\eta^\Gamma$ is surjective and hence induces an isomorphism
  \[
    (\g \otimes A)^\Gamma/(\g \otimes I_\eta)^\Gamma \xrightarrow{\cong} (\g \otimes A)/(\g \otimes I_\eta).
  \]
\end{proposition}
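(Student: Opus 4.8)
The plan is to obtain the isomorphism from the first isomorphism theorem once surjectivity of $\ev_\eta^\Gamma$ is established. As recorded just before Lemma~\ref{lem:ideal-equality}, the kernel of $\ev_\eta^\Gamma$ is precisely $(\g \otimes I_\eta)^\Gamma$; hence, granting that $\ev_\eta^\Gamma$ is onto, it induces an isomorphism $(\g \otimes A)^\Gamma/(\g \otimes I_\eta)^\Gamma \xrightarrow{\cong} (\g \otimes A)/(\g \otimes I_\eta)$, where on the right we identify $(\g \otimes A)/(\g \otimes I_\eta)$ with $\bigoplus_{x \in \Supp \eta}(\g \otimes A)/(\g \otimes \fm_x^{\eta(x)})$ by the Chinese Remainder Theorem (the $\fm_x$, $x \in \Supp \eta$, being distinct maximal ideals). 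So the entire content is the surjectivity of $\ev_\eta^\Gamma$.

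To prove surjectivity I would combine a Chinese Remainder argument with summation over $\Gamma$. Since $\Supp \eta \in X_*$ and $\Gamma$ acts freely on $X_\rat$, the points $\gamma \cdot x$ for $x \in \Supp \eta$, $\gamma \in \Gamma$, are pairwise distinct, so the maximal ideals $\fm_{\gamma \cdot x}$ are pairwise comaximal and therefore so are the ideals $\fm_{\gamma \cdot x}^{\eta(x)}$. Hence the Chinese Remainder Theorem over $A$, tensored with $\g$, shows that the natural map $\g \otimes A \to \bigoplus_{x \in \Supp \eta}\bigoplus_{\gamma \in \Gamma} (\g \otimes A)/(\g \otimes \fm_{\gamma \cdot x}^{\eta(x)})$ is surjective.

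Now, given a class in the target of $\ev_\eta^\Gamma$, represented by $(\alpha_x)_{x \in \Supp \eta}$ with $\alpha_x \in \g \otimes A$, I would use the previous step to choose $\alpha \in \g \otimes A$ with $\alpha \equiv \alpha_x \pmod{\g \otimes \fm_x^{\eta(x)}}$ for all $x \in \Supp \eta$ and $\alpha \equiv 0 \pmod{\g \otimes \fm_{\gamma \cdot x}^{\eta(x)}}$ for all $x \in \Supp \eta$ and $\gamma \in \Gamma \setminus \{e\}$. Then $\beta := \sum_{\gamma \in \Gamma} \gamma \cdot \alpha$ lies in $(\g \otimes A)^\Gamma$, and I would compute $\ev_\eta^\Gamma(\beta)$ using the identity $\gamma \cdot (\g \otimes \fm_y^n) = \g \otimes \fm_{\gamma \cdot y}^n$ from the proof of Lemma~\ref{lem:ideal-equality}: for $x \in \Supp \eta$ and $\gamma \ne e$ one has $\gamma^{-1} \ne e$, hence $\alpha \in \g \otimes \fm_{\gamma^{-1} \cdot x}^{\eta(x)}$ and so $\gamma \cdot \alpha \in \g \otimes \fm_x^{\eta(x)}$, while the term $\gamma = e$ contributes $\alpha_x$. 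Thus $\beta \equiv \alpha_x \pmod{\g \otimes \fm_x^{\eta(x)}}$ for every $x \in \Supp \eta$, i.e.\ $\ev_\eta^\Gamma(\beta)$ is the chosen class, proving surjectivity.

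I do not anticipate a real difficulty; the only care needed is the orbit bookkeeping — checking that for $\gamma \ne e$ the point $\gamma^{-1} \cdot x$ lies among those at which $\alpha$ was forced to vanish, which is exactly where freeness of the $\Gamma$-action enters. (A more conceptual alternative would be: $\g \otimes \tilde I_\eta$ is $\Gamma$-stable, so applying the exact functor of $\Gamma$-invariants — exact since $\operatorname{char} k = 0$ — to $0 \to \g \otimes \tilde I_\eta \to \g \otimes A \to (\g \otimes A)/(\g \otimes \tilde I_\eta) \to 0$ yields surjectivity onto $((\g \otimes A)/(\g \otimes \tilde I_\eta))^\Gamma$, which one then identifies with $(\g \otimes A)/(\g \otimes I_\eta)$ via the Chinese Remainder decomposition together with the fact that $\Gamma$ permutes the summands over each orbit simply transitively, using Lemma~\ref{lem:ideal-equality}. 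The direct argument above has the small bonus of not even needing $|\Gamma|$ to be invertible.)
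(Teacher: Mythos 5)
Your argument is correct, and its core is the same as the paper's: produce an element of $\g \otimes A$ whose value at each $x \in \Supp\eta$ is prescribed modulo $\fm_x^{\eta(x)}$ and which vanishes to order $\eta$ at all the other points of the relevant $\Gamma$-orbits, then symmetrize over $\Gamma$ and use freeness of the action to see that the extra translates do not disturb the values (your orbit bookkeeping with $\gamma^{-1}\cdot x$, including the matching of exponents, is exactly where the paper also invokes freeness). The only real difference is how the interpolating element is produced: you invoke the Chinese Remainder Theorem for the pairwise comaximal ideals $\fm_{\gamma\cdot x}^{\eta(x)}$ (legitimate, since the points $\gamma\cdot x$ are pairwise distinct when $\Supp\eta \in X_*$ and $\Gamma$ acts freely), whereas the paper avoids citing CRT and instead constructs the function explicitly, choosing $f_1$ vanishing at $x$ and equal to an $n$-th root of $-1$ on the remaining orbit points and setting $f_2 = f(1+f_1^n)^n$, which is in effect a hands-on proof of the one instance of CRT that is needed. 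Your route is shorter and handles an arbitrary target of $\ev_\eta^\Gamma$ in one step rather than reducing to elements $a\otimes f$ supported at a single point; the paper's route is more self-contained. Your parenthetical alternative (exactness of $\Gamma$-invariants in characteristic zero applied to $\g\otimes\tilde I_\eta$, followed by identifying the invariants of the CRT decomposition over each free orbit with a single factor) is also sound, and is closer in spirit to the surjectivity arguments of \cite{NSS}, though it does use $|\Gamma|$ invertible, as you note.
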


\begin{proof}
  It suffices to show that for arbitrary $a \in \g$, $f \in A$, $x \in \Supp \eta$, there exists $\alpha \in (\g \otimes A)^\Gamma$ such that
  \[
    \alpha - (a \otimes f) \in \g \otimes \fm_x^{\eta(x)},\quad \alpha \in \g \otimes \fm_y^{\eta(y)} \ \forall\ y \in \Supp \eta \setminus \{x\}.
  \]
  Let $n = \max_{y \in \Supp \eta} \eta(y)$ and let $\xi$ be an $n$-th root of $-1$.  Since the action of $\Gamma$ on $X$ is free, we can choose $f_1 \in A$ such that
  \[
    f_1(x) = 0, \quad f_1(\gamma \cdot x)=\xi \ \forall\ \gamma \in \Gamma, \gamma \ne 1,\quad f_1(\gamma \cdot y) = \xi \ \forall\ \gamma \in \Gamma, y \in \Supp \eta \setminus \{x\}.
  \]
  Then $f_1 \in \fm_x$.  So
  \[
    f_1^n \in \fm_x^n,\quad f_1^n(\gamma \cdot x)=-1 \ \forall\ \gamma \in \Gamma, \gamma \ne 1,\quad f_1^n(\gamma \cdot y) = -1 \ \forall\ \gamma \in \Gamma, y \in \Supp \eta \setminus \{x\}.
  \]
  Hence
  \[ \ts
    1 + f_1^n \in 1 + \fm_x^n,\quad 1+f_1^n \in \prod_{\gamma \in \Gamma,\, \gamma \ne 1} \fm_{\gamma \cdot x} \prod_{\substack{\gamma \in \Gamma \\ y \in \Supp \eta \setminus \{x\}}} \fm_{\gamma \cdot y}.
  \]
  Recall that for any ideal $I$ of $A$, the set $1 + I$ is closed under multiplication.  Thus
  \[ \ts
    (1+f_1^n)^n \in 1 + \fm_x^n,\quad (1+f_1^n)^n \in \prod_{\gamma \in \Gamma,\, \gamma \ne 1} \fm_{\gamma \cdot x}^n \prod_{\substack{\gamma \in \Gamma \\ y \in \Supp \eta \setminus \{x\}}} \fm_{\gamma \cdot y}^n,
  \]
  and so, setting $f_2 = f(1+f_1^n)^n$, we have
  \[ \ts
    f_2 \in f + \fm_x^n,\quad f_2 \in \prod_{\gamma \in \Gamma,\, \gamma \ne 1} \fm_{\gamma \cdot x}^n \prod_{\substack{\gamma \in \Gamma \\ y \in \Supp \eta \setminus \{x\}}} \fm_{\gamma \cdot y}^n.
  \]
  Define
  \[ \ts
    \alpha = \sum_{\gamma \in \Gamma} \gamma \cdot (a \otimes f_2) = \sum_{\gamma \in \Gamma} (\gamma \cdot a) \otimes (\gamma \cdot f_2) \in (\g \otimes A)^\Gamma.
  \]
  Since $\gamma \cdot \fm_y = \fm_{\gamma \cdot y}$ and $\Gamma$ acts freely on $X$, we have
  \[
    \gamma \cdot f_2 \in \fm_x^n \subseteq \fm_x^{\eta(x)} \ \forall\ \gamma \in \Gamma, \gamma \ne 1.
  \]
  Thus
  \[
    \alpha + \g \otimes \fm_x^{\eta(x)} = (a \otimes f_2) + \g \otimes \fm_x^{\eta(x)} = a \otimes f + \g \otimes \fm_x^{\eta(x)}.
  \]
  We also have
  \[
    \gamma \cdot f_2 \in \fm_y^n \subseteq \fm_y^{\eta(y)} \ \forall\ \gamma \in \Gamma,\ y \in \Supp \eta \setminus \{x\},
  \]
  and so
  \[
    \alpha \in \g \otimes \fm_y^{\eta(y)} \ \forall\ y \in \Supp \eta \setminus \{x\}.
  \]
\end{proof}

Let $\Xi$ be the character group of $\Gamma$. This is an abelian group, whose group operation we will write additively. Hence, $0$ is the character of the trivial one-dimensional representation, and if an irreducible representation affords the character $\xi$, then $-\xi$ is the character of the dual representation.

If $\Gamma$ acts on an algebra $B$ by automorphisms, it is well known that $B=\bigoplus_{\xi \in \Xi} B_\xi$ is a $\Xi$-grading, where $B_\xi$ is the isotypic component of type $\xi$. It follows that $(\g \otimes A)^\Gamma$ can be written as
\begin{equation} \label{eq:M-grading}
  (\g \otimes A)^\Gamma = \ts \bigoplus_{\xi \in \Xi} \, \g_\xi \otimes A_{-\xi},
\end{equation}
since $\g = \bigoplus_\xi \g_\xi$ and $A=\bigoplus_\xi A_\xi$ are $\Xi$-graded and $(\g_\xi \otimes A_{\xi'})^\Gamma = 0$ if $\xi'\ne -\xi$. The decomposition \eqref{eq:M-grading} is an algebra $\Xi$-grading.

\begin{lemma}[{\cite[Lem.~4.4]{NS11}}] \label{lem:free-actions}
  Suppose a finite abelian group $\Gamma$ acts on a unital associative commutative $k$-algebra $A$ (and hence on $X = \Spec A$) by automorphisms.  Let $A = \bigoplus_{\xi \in \Xi} A_\xi$ be the associated grading on $A$, where $\Xi$ is the character group of $\Gamma$.  Then the following conditions are equivalent:
  \begin{enumerate}
    \item \label{lem-item:free-actions:Gamma-free} $\Gamma$ acts freely on $X$, and
    \item \label{lem-item:free-actions:ideal-strong-grading} $\prod_{i=1}^n I_{\xi_i} = (I^n)_{\sum_{i=1}^n \xi_i}$ for all $\xi_1,\dots,\xi_n \in \Xi$ and any $\Gamma$-invariant ideal $I$ of $A$.  Here $I_\xi = I \cap A_\xi$ for $\xi \in \Xi$.
  \end{enumerate}
\end{lemma}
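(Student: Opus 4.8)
The plan is to prove the equivalence by establishing, as the main intermediate step, that a free action of $\Gamma$ forces the $\Xi$-grading $A = \bigoplus_{\xi \in \Xi} A_\xi$ to be \emph{strong}, meaning $A_\xi A_{-\xi} = A_0$ (equivalently $A_\xi A_{\xi'} = A_{\xi+\xi'}$) for all $\xi,\xi' \in \Xi$; condition (b) for a general $\Gamma$-invariant ideal then follows formally, and the converse is a short direct contradiction. Note first that the inclusion $\prod_{i=1}^n I_{\xi_i} \subseteq (I^n)_{\sum_i \xi_i}$ always holds, since the $\Xi$-grading is a ring grading and a $\Gamma$-invariant ideal is graded ($I = \bigoplus_\xi I_\xi$, $I_\xi = I \cap A_\xi$); so the content of (b) is the reverse inclusion.

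Assume (a) and grant the strong-grading property. I would first observe that $I_\xi = A_\xi I_0$ for every $\xi$: the inclusion $\supseteq$ is clear, and for $\subseteq$ one writes $I_\xi = A_0 I_\xi = A_\xi(A_{-\xi}I_\xi) \subseteq A_\xi(I \cap A_0) = A_\xi I_0$. Hence $\prod_{i=1}^n I_{\xi_i} = \big(\prod_i A_{\xi_i}\big) I_0^{\,n} = A_{\xi_1+\cdots+\xi_n}\, I_0^{\,n}$, using $A_{\xi_1}\cdots A_{\xi_n} = A_{\xi_1+\cdots+\xi_n}$. On the other hand, expanding $I^n = \big(\bigoplus_\zeta I_\zeta\big)^n$ into homogeneous products and collecting terms of a fixed total degree $\zeta$ gives $(I^n)_\zeta = A_\zeta I_0^{\,n}$ for every $\zeta$ (each homogeneous product of total degree $\zeta$ equals $A_\zeta I_0^{\,n}$ by the previous formula, and the term indexed by $(\zeta,0,\dots,0)$ already realizes it). Comparing the two displays yields (b).

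It remains to prove $A_\xi A_{-\xi} = A_0$ when $\Gamma$ acts freely, and this is the heart of the matter. This product is an ideal of $A_0 = A^\Gamma$; since $A$ is integral over $A_0$ (as $\Gamma$ is finite), every maximal ideal of $A_0$ has the form $\fm_x \cap A_0$ for some $x \in X_\rat$, and since $A_0$ is a finitely generated $k$-algebra it suffices to show $A_\xi A_{-\xi} \not\subseteq \fm_x$ for every $x \in X_\rat$. Fix $x$. By freeness the points $\gamma \cdot x$ ($\gamma \in \Gamma$) are pairwise distinct, so the $\fm_{\gamma\cdot x}$ are pairwise coprime and the Chinese Remainder Theorem produces $h \in A$ with $h \equiv 1 \pmod{\fm_x}$ and $h \in \fm_{\gamma\cdot x}$ for all $\gamma \ne 1$. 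The ``$\xi$-averages'' $f = |\Gamma|^{-1}\sum_{\gamma}\xi(\gamma)^{-1}(\gamma\cdot h) \in A_\xi$ and $g = |\Gamma|^{-1}\sum_\gamma \xi(\gamma)(\gamma\cdot h) \in A_{-\xi}$ then satisfy $f(x) = g(x) = |\Gamma|^{-1} \ne 0$ --- here freeness is used a second time, to see that $\gamma\cdot h \in \fm_x$ whenever $\gamma \ne 1$ --- so $fg \in A_\xi A_{-\xi}$ has nonzero value at $x$, i.e.\ $fg \notin \fm_x$. For the converse (b) $\Rightarrow$ (a), I would argue contrapositively: if the action is not free, pick $x \in X_\rat$ and $\gamma_0 \in \Gamma\setminus\{1\}$ with $\gamma_0\cdot x = x$, and (using that $\Gamma$ is abelian, so characters separate its elements) a character $\xi$ with $\xi(\gamma_0) \ne 1$. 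Since $\gamma_0$ preserves $\fm_x$ and therefore acts as the identity on $A/\fm_x \cong k$, any $a \in A_\xi$ satisfies $\xi(\gamma_0)a(x) = (\gamma_0\cdot a)(x) = a(x)$, hence $a(x) = 0$; thus $A_\xi \subseteq \fm_x$ and likewise $A_{-\xi}\subseteq \fm_x$, so $A_\xi A_{-\xi} \subseteq \fm_x^2 \cap A_0 \subsetneq A_0 = (A^2)_0$, and (b) fails for $I = A$, $n = 2$, $(\xi_1,\xi_2) = (\xi,-\xi)$.

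The only real obstacle is the strong-grading identity $A_\xi A_{-\xi} = A_0$: this is where freeness genuinely enters, via the combination of a Chinese-Remainder partition-of-unity (to produce graded sections of prescribed nonvanishing) and the passage from ``the ideal is all of $A_0$'' to ``it is contained in no maximal ideal of $A_0$''. Everything else is formal manipulation of gradings; the only point requiring care is to keep the convention for the $\Xi$-grading (which character a graded piece affords, and the induced action on functions) consistent across the two implications.
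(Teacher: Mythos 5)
Your argument is correct. One point of comparison to keep in mind: the paper itself gives no proof of this lemma --- it is imported verbatim from \cite[Lem.~4.4]{NS11} --- so there is no in-text argument to match against; your write-up is a complete, self-contained substitute for that citation. The route you take (freeness $\Rightarrow$ the $\Xi$-grading is strong, proved by a Chinese Remainder function on the orbit $\Gamma\cdot x$ together with its $\xi$-isotypic averages and integrality of $A$ over $A_0=A^\Gamma$ to reduce to rational points; then the formal identities $I_\xi=A_\xi I_0$ and $(I^n)_\zeta=A_\zeta I_0^{\,n}$ for any $\Gamma$-invariant ideal $I$; converse via a character nontrivial on a nontrivial stabilizer) is the standard strong-grading characterization of free abelian actions and is in the same spirit as the cited source. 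Two small points of hygiene, neither a genuine gap: (i) identifying every maximal ideal of $A_0$ with $\fm_x\cap A_0$ for some $x\in X_\rat$ needs lying-over \emph{and} finite generation of $A$ over the algebraically closed field $k$ (Nullstellensatz); your sentence credits finite generation of $A_0$, which is not the relevant hypothesis. This is harmless here because finite generation of $A$ is a standing assumption of the paper and ``free on $X$'' is defined there to mean free on $X_\rat$, but note the lemma as displayed does not restate these hypotheses, so your proof proves the lemma in the form actually used in the paper rather than in fullest possible generality. (ii) The averages defining $f\in A_\xi$, $g\in A_{-\xi}$ use $|\Gamma|^{-1}\in k$, i.e.\ characteristic zero (or characteristic prime to $|\Gamma|$), again covered by the paper's assumptions. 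With the conventions pinned down as you use them (isotypic components $A_\xi=\{a : \gamma\cdot a=\xi(\gamma)a\}$ and $(\gamma\cdot h)(x)=h(\gamma^{-1}\cdot x)$, applied consistently in both directions), every step checks out.
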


For a Lie algebra $L$, define $L^n$, $n \ge 1$, by
\[
  L^1 = L,\quad L^n = [L,L^{n-1}],\quad n > 1.
\]
The following proposition, combined with Proposition~\ref{prop:fundamental-quotient-isom}, will allow us to define and deduce properties of finite-dimensional modules for equivariant map algebras from the corresponding notions for untwisted map algebras.

\begin{proposition} \label{prop:fd-annihilator}
  Every finite-dimensional $(\g \otimes A)^\Gamma$-module is annihilated by $(\g \otimes I_\eta)^\Gamma$ for some $\eta : X_\rat \to \N$ with $\Supp \eta \subseteq X_*$.
\end{proposition}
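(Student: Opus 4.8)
The plan is to reduce to a statement about a finite-dimensional module for the untwisted algebra $\g \otimes A$ and then use a support/annihilator argument there. Let $V$ be a finite-dimensional $(\g \otimes A)^\Gamma$-module. First I would pass to the ideal $J = \Ann_{(\g \otimes A)^\Gamma} V$; since $V$ is finite-dimensional, the quotient $(\g \otimes A)^\Gamma / J$ is a finite-dimensional Lie algebra. The key point to extract is that this quotient, being finite-dimensional, must have a large ideal of $(\g \otimes A)^\Gamma$ inside $J$: more precisely, I want to show $(\g \otimes I_\eta)^\Gamma \subseteq J$ for suitable $\eta$ with $\Supp \eta \subseteq X_*$.

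The cleanest route is via the $\Xi$-grading \eqref{eq:M-grading} and the strong-grading property in Lemma~\ref{lem:free-actions}. Because $(\g \otimes A)^\Gamma$ is perfect (Remark~\ref{rem:irred-classification}), the descending central series stabilizes, but what I really want is a statement like: for every cofinite-dimensional ideal $J$ of $(\g \otimes A)^\Gamma$ there is a $\Gamma$-invariant ideal $I$ of $A$ with $A/I$ finite-dimensional and $(\g \otimes I)^\Gamma \subseteq J$. To get this, consider the ideal $I$ of $A$ generated by all $f \in A_\xi$ (for $\xi \in \Xi$) such that $\g_{-\xi} \otimes f \subseteq J$; one checks $I$ is $\Gamma$-invariant, and since $\g$ is semisimple with $\g = \bigoplus_\xi \g_\xi$ and each $\g_{-\xi}$ generates $\g$ as an ideal (or spans after bracketing, using perfectness of $\g$), the condition $\g_{-\xi} \otimes A_\xi \subseteq J$ for all $\xi$ forces $(\g \otimes A)^\Gamma \subseteq J + (\g \otimes I)^\Gamma$, i.e.\ $A/I$ controls the codimension. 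Finiteness of $\dim V$ gives $\dim\big((\g\otimes A)^\Gamma/J\big) < \infty$, hence $\dim(A/I) < \infty$, hence $I$ is cofinite in $A$. Then $\sqrt{I} = \fm_{x_1} \cap \cdots \cap \fm_{x_r}$ for finitely many rational points (here one uses that $A$ is a finitely generated algebra over an algebraically closed field, so maximal ideals are rational and $A/I$ Artinian), and $I \supseteq \tilde I_\eta$ for some $\eta$ supported on a set of representatives of the $\Gamma$-orbits of $\{x_1,\dots,x_r\}$ (which lies in $X_*$), after taking $\eta$ large enough that $\big(\prod_i \fm_{x_i}\big)^{N} \subseteq I$. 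Applying Lemma~\ref{lem:ideal-equality} then gives $(\g \otimes I_\eta)^\Gamma = (\g \otimes \tilde I_\eta)^\Gamma \subseteq (\g \otimes I)^\Gamma \subseteq J = \Ann V$, as desired.

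An alternative, and perhaps more transparent, route avoids the grading: restrict $V$ to the subalgebra $\g = \g \otimes k$ and to each $\g \otimes \fm_x^N$, or simply observe that $V$ is a module over the finite-dimensional quotient $L := (\g\otimes A)^\Gamma / \Ann V$, use that the kernel of the action contains $L^{(k)}$ for large $k$ only if $L$ is solvable — which it need not be — so instead argue directly: the annihilator $J$ contains $(\g \otimes I)^\Gamma$ where $I = \{f \in A : a \otimes f \text{ acts by } 0 \text{ for all } a \in \g \text{ with } a \otimes f \in (\g\otimes A)^\Gamma\}$... this gets delicate, so I would commit to the grading argument above.

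\textbf{Main obstacle.} The crux is showing that cofiniteness of the annihilator $J$ in $(\g \otimes A)^\Gamma$ translates into cofiniteness of a genuine \emph{$\Gamma$-invariant ideal $I$ of $A$} with $(\g \otimes I)^\Gamma \subseteq J$ — that is, that $J$ is forced to contain an ideal of the form $(\g \otimes I)^\Gamma$ at all, rather than some smaller subspace. This requires using semisimplicity of $\g$ (so that $\g_{-\xi}$, for the various $\xi$, together generate $\g$ as a Lie algebra and each bracket $[\g_{-\xi}, \g_{-\xi'}] \subseteq \g_{-\xi-\xi'}$, reconstructing all of $\g$) together with the strong-grading identity of Lemma~\ref{lem:free-actions}\eqref{lem-item:free-actions:ideal-strong-grading}, which is exactly what guarantees $(\g \otimes I)^\Gamma = \bigoplus_\xi \g_\xi \otimes I_{-\xi}$ behaves well under products of ideals. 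Once $I$ is produced, passing from $I \supseteq \tilde I_\eta$ is routine commutative algebra (Nullstellensatz plus the fact that $\tilde I_\eta$ is a product of powers of the finitely many maximal ideals in $\Supp I$), and Lemma~\ref{lem:ideal-equality} finishes it.
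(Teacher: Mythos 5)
There is a genuine gap at exactly the point you flag as the crux: the inclusion $(\g\otimes I)^\Gamma\subseteq J$ for your ideal $I$ is neither proved nor, as you have set it up, true. First, your generating set includes every $f\in A_\xi$ for which $\g_{-\xi}=0$, since the condition $\g_{-\xi}\otimes f\subseteq J$ is then vacuous. If, for instance, $\Gamma$ acts trivially on $\g$ and freely on $X$, then $\g_{-\xi}=0$ for all $\xi\neq 0$, so $I$ contains every $A_\xi$ with $\xi\neq 0$, and by Lemma~\ref{lem:free-actions} (applied with the ideal $A$ itself) also $A_\xi A_{-\xi}=A_0\ni 1$; hence $I=A$ and $(\g\otimes I)^\Gamma=(\g\otimes A)^\Gamma\not\subseteq J$ whenever $J$ is proper. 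Second, even after restricting to those $\xi$ with $\g_{-\xi}\neq 0$, the propagation from the generators of $I$ to all of $I$ does not follow from $J$ being an ideal: for $f\in A_\xi$ with $\g_{-\xi}\otimes f\subseteq J$ and $g\in A_\tau$, bracketing with $\g_{-\tau}\otimes A_\tau$ only gives $[\g_{-\tau},\g_{-\xi}]\otimes gf\subseteq J$, and $[\g_{-\tau},\g_{-\xi}]$ can be a proper subspace of $\g_{-\xi-\tau}$ (take $\tau=\xi=0$ with $\g_0$ the Cartan subalgebra fixed by an inner involution of $\mathfrak{sl}_2$, a case allowed by the paper's hypotheses: $[\g_0,\g_0]=0$). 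Saying that $\g_{-\xi}$ generates $\g$ does not repair this, because the functional part changes with each bracket, so you never conclude $\g_{-\xi-\tau}\otimes gf\subseteq J$. The underlying structural claim --- every cofinite ideal of $(\g\otimes A)^\Gamma$ contains some $(\g\otimes I)^\Gamma$ with $I$ cofinite and $\Gamma$-invariant --- may well hold in this setting, but your sketch does not establish it; in particular the sentence ``the condition $\g_{-\xi}\otimes A_\xi\subseteq J$ for all $\xi$ forces $(\g\otimes A)^\Gamma\subseteq J+(\g\otimes I)^\Gamma$'' is not a correct formulation of what is needed (its hypothesis would already say $J=(\g\otimes A)^\Gamma$). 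The cofiniteness of $I$ and the final steps (Nullstellensatz, $I\supseteq\tilde I_\eta$, Lemma~\ref{lem:ideal-equality}) are fine, but they rest on the unproved containment.

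For contrast, the paper sidesteps any claim about arbitrary cofinite ideals. It takes a composition series of $V$, uses Proposition~\ref{prop:irred-classification} to see that each irreducible subquotient is an evaluation module and hence is killed by $(\g\otimes I_{\eta_i})^\Gamma$ with $\eta_i$ the characteristic function of its support, and then proves the identity $\bigl((\g\otimes I)^\Gamma\bigr)^m=(\g\otimes I^m)^\Gamma$ for $\Gamma$-invariant ideals $I$ --- this is where Lemma~\ref{lem:free-actions} and semisimplicity of $\g$ enter --- so that $\bigl((\g\otimes I_\nu)^\Gamma\bigr)^n=(\g\otimes I_{n\nu})^\Gamma$ annihilates all of $V$; Lemma~\ref{lem:ideal-equality} then adjusts the support into $X_*$, exactly as in your last step. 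If you wish to salvage your route, you would have to prove the ideal-containment claim, and the natural proof uses the same graded computation as the paper's power identity, at which point the composition-series argument is the shorter path.
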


\begin{proof}
  Suppose $V$ is a finite-dimensional $(\g \otimes A)^\Gamma$-module annihilated by $(\g \otimes I_\eta)^\Gamma$ for some finitely supported $\eta : X_\rat \to \N$.  By Lemma~\ref{lem:ideal-equality}, we can find $\eta' : X_\rat \to \N$ with $\Supp \eta' \subseteq X_*$ and $(\g \otimes I_{\eta'})^\Gamma \subseteq (\g \otimes I_\eta)^\Gamma$.  Thus it suffices to prove that every finite-dimensional $(\g \otimes A)^\Gamma$-module is annihilated by some $(\g \otimes I_\eta)^\Gamma$.

  We first prove by induction that for any $\Gamma$-invariant ideal $I$ of $A$,
  \begin{equation} \label{eq:power-ideal}
    \left( (\g \otimes I)^\Gamma \right)^m = (\g \otimes I^m)^\Gamma \quad \forall\ m \ge 1.
  \end{equation}
  The result is trivial for $m=1$.  Assume it is true for some $m \ge 1$.  Then
  \begin{flalign*}
    && \left( (\g \otimes I)^\Gamma \right)^{m+1} &= \left[ (\g \otimes I)^\Gamma, \left( (\g \otimes I)^\Gamma \right)^m \right] \\
    && &= \left[ (\g \otimes I)^\Gamma, (\g \otimes I^m)^\Gamma \right] && \text{(by the induction hypothesis)} \\
    && &= \ts \left[ \bigoplus_{\xi \in \Xi} \g_\xi \otimes I_{-\xi}, \bigoplus_{\tau \in \Xi} \g_\tau \otimes (I^m)_{-\tau} \right] \\
    && &= \ts \sum_{\xi,\tau \in \Xi} [\g_\xi,\g_\tau] \otimes I_{-\xi} (I^m)_{-\tau} \\
    && &= \ts \sum_{\xi,\tau \in \Xi} [\g_\xi,\g_\tau] \otimes (I^{m+1})_{-\xi-\tau} && \text{(by Lemma~\ref{lem:free-actions})} \\
    && &= \ts \bigoplus_{\xi \in \Xi} \g_\xi \otimes (I^{m+1})_{-\xi} && \text{(since $\g$ is semisimple)} \\
    && &= (\g \otimes I^{m+1})^\Gamma.
  \end{flalign*}
  Thus \eqref{eq:power-ideal} holds.

  Now let $V$ be a finite-dimensional $(\g \otimes A)^\Gamma$-module.  Then there exists a filtration
  \[
    0 = V_0 \subseteq V_1 \subseteq \dots \subseteq V_n = V,
  \]
  such that $V_i/V_{i-1}$ is an irreducible finite-dimensional $(\g \otimes A)^\Gamma$-module for $1 \le i \le n$.  By Proposition~\ref{prop:irred-classification}, each $V_i/V_{i-1}$ is an evaluation module.  Let  $\eta_i : X_\rat \to \N$ be the characteristic function of the support of $V_i/V_{i-1}$.  Then $(\g \otimes I_{\eta_i})^\Gamma \cdot (V_i/V_{i-1}) = 0$.  In other words, $(\g \otimes I_{\eta_i})^\Gamma \cdot V_i \subseteq V_{i-1}$.

  Let $\nu = \sum_{i=1}^n \eta_i$ and $\eta = n \nu$.  We claim that $(\g \otimes I_\eta)^\Gamma \cdot V = 0$.  Since $I_\eta = I_\nu^n$, it follows from \eqref{eq:power-ideal} that $\left( (\g \otimes I_\nu)^\Gamma \right)^n = (\g \otimes I_\eta)^\Gamma$.  Because $I_\nu \subseteq I_{\eta_i}$, we have $(\g \otimes I_\nu)^\Gamma \cdot V_i \subseteq V_{i-1}$ for all $1 \le i \le n$.  Therefore
  \[
    (\g \otimes I_\eta)^\Gamma \cdot V = \left( (\g \otimes I_\nu)^\Gamma \right)^n \cdot V = 0.\qedhere
  \]
\end{proof}

For functions $\eta, \eta' : X_\rat \to \N$ with finite support, we write $\eta \le \eta'$ if $\eta(x) \le \eta'(x)$ for all $x \in X_\rat$.  Clearly
\[
  \eta \le \eta' \implies I_\eta \supseteq I_{\eta'} \implies \g \otimes I_\eta \supseteq \g \otimes I_{\eta'}.
\]
Thus, for $\eta \le \eta'$, we have natural projections
\[
  (\g \otimes A)/(\g \otimes I_{\eta'}) \twoheadrightarrow (\g \otimes A)/(\g \otimes I_{\eta}),
  \quad (\g \otimes A)^\Gamma/(\g \otimes I_{\eta'})^\Gamma \twoheadrightarrow (\g \otimes A)^\Gamma/(\g \otimes I_{\eta})^\Gamma.
\]

\begin{lemma} \label{lem:ideal-isom-comm-diagram}
  If $\eta,\eta' : X_\rat \to \N$ are such that $\eta \le \eta'$ and $\Supp \eta' \subseteq X_*$, then the diagram
  \begin{equation} \label{eq:ideal-isom-comm-diagram}
    \xymatrix{
      (\g \otimes A)^\Gamma/(\g \otimes I_{\eta'})^\Gamma \ar[r]^{\cong} \ar@{->>}[d] & (\g \otimes A)/(\g \otimes I_{\eta'}) \ar@{->>}[d] \\
      (\g \otimes A)^\Gamma/(\g \otimes I_{\eta})^\Gamma \ar[r]^{\cong} & (\g \otimes A)/(\g \otimes I_{\eta})
    }
  \end{equation}
  is commutative, where the horizontal maps are the isomorphisms induced by evaluation as in Proposition~\ref{prop:fundamental-quotient-isom}.
\end{lemma}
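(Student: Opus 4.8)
The plan is to observe that all four maps in the square are built from a single construction — the quotient map $\ev_\eta$ of $\g \otimes A$ and its restriction to $(\g \otimes A)^\Gamma$ — so that commutativity reduces to a one-line diagram chase once the maps are written out explicitly. There is no representation-theoretic input needed here; everything is formal.

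First I would record that $\Supp \eta \subseteq \Supp \eta'$ (if $\eta(x) \neq 0$ then $\eta'(x) \geq \eta(x) > 0$), so the hypothesis $\Supp \eta' \subseteq X_*$ gives $\Supp \eta \subseteq X_*$ as well, and Proposition~\ref{prop:fundamental-quotient-isom} applies to both $\eta$ and $\eta'$. Thus the top horizontal arrow is the isomorphism induced by $\ev_{\eta'}^\Gamma$, sending $\alpha + (\g \otimes I_{\eta'})^\Gamma \mapsto \alpha + (\g \otimes I_{\eta'})$, and the bottom horizontal arrow is the one induced by $\ev_\eta^\Gamma$, sending $\alpha + (\g \otimes I_{\eta})^\Gamma \mapsto \alpha + (\g \otimes I_{\eta})$. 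The right vertical arrow is the natural projection $\alpha + (\g \otimes I_{\eta'}) \mapsto \alpha + (\g \otimes I_{\eta})$, which is well defined since $\eta \le \eta'$ forces $\g \otimes I_{\eta'} \subseteq \g \otimes I_{\eta}$, and the left vertical arrow is the analogous projection between the fixed-point quotients.

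Then I would chase an arbitrary class $\alpha + (\g \otimes I_{\eta'})^\Gamma$ with $\alpha \in (\g \otimes A)^\Gamma$. Going right then down produces $\alpha + (\g \otimes I_{\eta'}) \mapsto \alpha + (\g \otimes I_{\eta})$; going down then right produces $\alpha + (\g \otimes I_{\eta})^\Gamma \mapsto \alpha + (\g \otimes I_{\eta})$. These coincide, so the square commutes. Equivalently, on $\g \otimes A$ one has $\ev_\eta = p \circ \ev_{\eta'}$, where $p$ is the right vertical projection; restricting this identity to $(\g \otimes A)^\Gamma$ and passing to the maps it induces on the quotients gives exactly the asserted commutativity. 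I do not anticipate any genuine obstacle: the only step requiring a word of justification is the applicability of Proposition~\ref{prop:fundamental-quotient-isom} to $\eta$, which is immediate from $\Supp \eta \subseteq \Supp \eta'$.
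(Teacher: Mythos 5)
Your proof is correct and is essentially the paper's argument: the paper simply notes that both composites are induced from $(\g \otimes A)^\Gamma \hookrightarrow \g \otimes A \twoheadrightarrow (\g \otimes A)/(\g \otimes I_\eta)$, which is exactly your explicit diagram chase (together with your correct, minor observation that $\Supp \eta \subseteq \Supp \eta'$ makes Proposition~\ref{prop:fundamental-quotient-isom} applicable to $\eta$ as well).
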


\begin{proof}
  This is clear from the fact that both compositions in the diagram are induced from the composition
  \[
    (\g \otimes A)^\Gamma \hookrightarrow \g \otimes A \twoheadrightarrow (\g \otimes A)/(\g \otimes I_\eta).
  \]
\end{proof}

Suppose $V$ is a finite-dimensional $(\g \otimes A)^\Gamma$-module.  By Proposition~\ref{prop:fd-annihilator}, there exists a function $\eta : X_\rat \to \N$, $\Supp \eta \subseteq X_*$, such that $(\g \otimes I_\eta)^\Gamma$ annihilates $V$.  Therefore the action of $(\g \otimes A)^\Gamma$ on $V$ factors through $(\g \otimes A)^\Gamma/(\g \otimes I_\eta)^\Gamma$ and the composition
\[
  \g \otimes A \twoheadrightarrow (\g \otimes A)/(\g \otimes I_\eta) \cong (\g \otimes A)^\Gamma/(\g \otimes I_\eta)^\Gamma \to \End V
\]
defines an action of $(\g \otimes A)$ on $V$.  We denote the resulting $(\g \otimes A)$-module by $V^\eta$.

\begin{lemma} \label{lem:eta-independence}
  Suppose $V$ is a finite-dimensional $(\g \otimes A)^\Gamma$-module that is annihilated by $(\g \otimes I_\eta)^\Gamma$ and $(\g \otimes I_{\eta'})^\Gamma$ for functions $\eta,\eta' : X_\rat \to \N$ such that $\Supp \eta \cup \Supp \eta' \subseteq X_*$.  Then $V^\eta = V^{\eta'}$ as $(\g \otimes A)$-modules.
\end{lemma}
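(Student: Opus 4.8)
The plan is to reduce both module structures $V^\eta$ and $V^{\eta'}$ to a single one coming from a common refinement of $\eta$ and $\eta'$, and then to invoke the compatibility of the evaluation isomorphisms recorded in Lemma~\ref{lem:ideal-isom-comm-diagram}. First I would set $\eta'' = \max(\eta,\eta')$, the pointwise maximum, so that $\eta \le \eta''$, $\eta' \le \eta''$, and $\Supp \eta'' = \Supp \eta \cup \Supp \eta' \subseteq X_*$. Since $I_{\eta''} \subseteq I_\eta$, we have $(\g \otimes I_{\eta''})^\Gamma \subseteq (\g \otimes I_\eta)^\Gamma$, so $V$ is also annihilated by $(\g \otimes I_{\eta''})^\Gamma$ and the $(\g \otimes A)$-module $V^{\eta''}$ is defined. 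It then suffices to show $V^\eta = V^{\eta''}$, since the same argument with $\eta'$ in place of $\eta$ gives $V^{\eta'} = V^{\eta''}$.

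To see that $V^\eta = V^{\eta''}$, I would chase the definitions. The action of an element $x \in \g \otimes A$ on $V^{\eta''}$ is obtained by projecting $x$ to $(\g \otimes A)/(\g \otimes I_{\eta''})$, transporting it along the isomorphism of Proposition~\ref{prop:fundamental-quotient-isom} to $(\g \otimes A)^\Gamma/(\g \otimes I_{\eta''})^\Gamma$, and then letting this element act on $V$. Because the $(\g \otimes A)^\Gamma$-action on $V$ already factors through $(\g \otimes A)^\Gamma/(\g \otimes I_\eta)^\Gamma$, this action is unchanged if we further compose with the projection $(\g \otimes A)^\Gamma/(\g \otimes I_{\eta''})^\Gamma \twoheadrightarrow (\g \otimes A)^\Gamma/(\g \otimes I_\eta)^\Gamma$. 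By the commutativity of \eqref{eq:ideal-isom-comm-diagram} (applied to $\eta \le \eta''$), the resulting composite equals: project $x$ to $(\g \otimes A)/(\g \otimes I_{\eta''})$, then to $(\g \otimes A)/(\g \otimes I_\eta)$, transport along the isomorphism to $(\g \otimes A)^\Gamma/(\g \otimes I_\eta)^\Gamma$, and act on $V$. But the composition of the first two projections is just the canonical map $\g \otimes A \twoheadrightarrow (\g \otimes A)/(\g \otimes I_\eta)$, so this is precisely the defining action on $V^\eta$. Hence $V^\eta = V^{\eta''}$ as $(\g \otimes A)$-modules, and the lemma follows.

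The only point requiring any care is the diagram chase in the second paragraph; I do not expect a serious obstacle, as Lemma~\ref{lem:ideal-isom-comm-diagram} is tailored exactly to this comparison and the argument is otherwise purely formal.
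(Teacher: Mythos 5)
Your proof is correct and follows essentially the same route as the paper: the paper takes the common refinement $\tau = \eta + \eta'$ instead of your $\max(\eta,\eta')$, notes that $(\g \otimes I_\tau)^\Gamma$ annihilates $V$, and concludes $V^\eta = V^\tau = V^{\eta'}$ from the commutativity supplied by Lemma~\ref{lem:ideal-isom-comm-diagram}, exactly as in your diagram chase. The choice of dominating function is immaterial, so there is nothing to fix.
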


\begin{proof}
  Let $\tau = \eta + \eta'$.  It is clear that $(\g \otimes I_\tau)^\Gamma$ annihilates $V$.  Since $\Supp \tau = \Supp \eta \cup \Supp \eta'$, it follows from Lemma~\ref{lem:ideal-isom-comm-diagram} that the diagram
  \[
    \xymatrix{
      & (\g \otimes A)/(\g \otimes I_\eta) \ar[r]^(0.46){\cong} & (\g \otimes A)^\Gamma/(\g \otimes I_{\eta})^\Gamma \ar[dr] & \\
      \g \otimes A \ar@{->>}[ur] \ar@{->>}[r] \ar@{->>}[dr] & (\g \otimes A)/(\g \otimes I_\tau) \ar[r]^(0.46){\cong} \ar@{->>}[u] \ar@{->>}[d] & (\g \otimes A)^\Gamma/(\g \otimes I_\tau)^\Gamma \ar[r] \ar@{->>}[u] \ar@{->>}[d] & \End V \\
      & (\g \otimes A)/(\g \otimes I_{\eta'}) \ar[r]^(0.46){\cong} & (\g \otimes A)^\Gamma/(\g \otimes I_{\eta'})^\Gamma \ar[ur] &
    }
  \]
  commutes, where the three isomorphisms in the middle are the inverses of the isomorphisms of Proposition~\ref{prop:fundamental-quotient-isom} induced by evaluation.  It follows that $V^\eta = V^\tau = V^{\eta'}$ as $(\g \otimes A)$-modules.
\end{proof}

\begin{definition}[Categories $\cF$, $\cF^\Gamma$, $\cF_\bx$, and $\cF_\bx^\Gamma$]
  Let $\cF$ and $\cF^\Gamma$ be the categories of finite-dimensional representations of $\g \otimes A$ and $(\g \otimes A)^\Gamma$ respectively.  For $\bx \in X_*$, define $\cF_\bx$ (resp.\ $\cF_\bx^\Gamma$) to be the full subcategory of $\cF$ (resp.\ $\cF^\Gamma$) consisting of those representations whose irreducible constituents all have support contained in $\bx$ (resp.\ $\Gamma \cdot \bx$).
\end{definition}

\begin{definition}[Twisting functor]
  We have a natural \emph{twisting functor} $\bT : \cF \to \cF^\Gamma$ defined by restricting from $\g \otimes A$ to $(\g \otimes A)^\Gamma$.  For any $\bx \in X_*$, we have the induced functor $\bT_\bx : \cF_\bx \to \cF_\bx^\Gamma$.
\end{definition}

\begin{definition}[Untwisting functor]
  Fix $\bx \in X_*$.  By Proposition~\ref{prop:fd-annihilator}, every module $V \in \cF_\bx^\Gamma$ is annihilated by some $(\g \otimes I_\eta)^\Gamma$ with $\Supp \eta \subseteq \bx$.  By Lemma~\ref{lem:eta-independence}, the modules $V^\eta$ are independent of the choice of $\eta$.  The \emph{untwisting functor} $\bU_\bx : \cF_\bx^\Gamma \to \cF_\bx$ is defined to be the functor that, on objects, maps $V$ to $V^\eta$.  Now suppose $V, W \in \cF_\bx^\Gamma$ and $\beta : V \to W$ is a morphism in $\cF_\bx^\Gamma$.  Since $\cF_\bx^\Gamma$ is a full subcategory of $\cF^\Gamma$, $\beta : V \to W$ is a morphism in $\cF^\Gamma$, which means that it is a homomorphism of $(\g \otimes A)^\Gamma$-modules.  Choose $\eta : X_\rat \to \N$ with support contained in $\bx$ such that $(\g \otimes I_\eta)^\Gamma$ annihilates both $V$ and $W$.  Then the action of $(\g \otimes A)^\Gamma$ on $V$ and $W$ factors through $(\g \otimes A)^\Gamma/(\g \otimes I_\eta)^\Gamma$.  By definition, it follows that $\beta$ is also a homomorphism of $(\g \otimes A)$-modules from $V^\eta$ to $W^\eta$.  We define $\bU_\bx(\beta)$ to be this homomorphism.  One easily sees that $\bU_\bx$ respects composition of morphisms and hence is a well-defined functor.
\end{definition}

For a $\Gamma$-invariant subset $Y$ of $X_\rat$, let $Y_\Gamma$ denote the set of subsets of $Y$ containing exactly one point from each $\Gamma$-orbit in $Y$.  For $\psi \in \cE^\Gamma$ and $\bx \in (\Supp \psi)_\Gamma$, define
\[
  \psi_\bx : X_\rat \to P^+,\quad \psi_\bx(x) =
  \begin{cases}
    \psi(x) & \text{if } x \in \bx, \\
    0 & \text{if } x \not \in \bx.
  \end{cases}
\]

\begin{theorem} \label{theo:category-equivalence}
  For $\bx \in X_*$, the twisting and untwisting functors have the following properties.
  \begin{enumerate}
    \item \label{theo-item:restrction-functor} The twisting functor $\bT$ maps the isomorphism class $\ev_\psi$ for $\psi \in \cE$, $\Supp \psi \subseteq X_*$, to the isomorphism class $\ev^\Gamma_{\psi^\Gamma}$ for $\psi^\Gamma \in \cE^\Gamma$, where
        \[ \ts
          \psi^\Gamma(x) = \sum_{\gamma \in \Gamma} \gamma \cdot \psi(\gamma^{-1} \cdot x), \quad x \in X_\rat.
        \]

    \item \label{theo-item:untwisting-psi} The untwisting functor $\bU_\bx$ maps the isomorphism class $\ev^\Gamma_\psi$, $\psi \in \cE^\Gamma$, to the isomorphism class $\ev_{\psi_\bx}$.

    \item \label{theo-item:category-equivalence} The functors $\bT_\bx$ and $\bU_\bx$ are mutually inverse isomorphisms of categories.
  \end{enumerate}
\end{theorem}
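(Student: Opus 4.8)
The plan is to prove the three parts in order, using the structural results already established. For part~\eqref{theo-item:restrction-functor}, I would take $\psi \in \cE$ with $\Supp \psi \subseteq X_*$ and an evaluation representation $V = \bigotimes_{x \in \Supp \psi} V(\psi(x))$ of $\g \otimes A$. Restricting to $(\g \otimes A)^\Gamma$, I need to identify the isomorphism class of the resulting module. The key observation is that for a rational point $x$, the $\Gamma$-orbit $\Gamma \cdot x$ contributes, after restriction and evaluation at all its points, a tensor factor $\bigotimes_{\gamma \in \Gamma} V(\gamma \cdot \psi(x))$ pulled back along $\ev^\Gamma_{\Gamma \cdot x}$; but since a single $(\g \otimes A)^\Gamma$ element evaluates compatibly across the orbit, one should compare this with the single orbit evaluation representation $\ev^\Gamma_\varphi$ where $\varphi$ is supported on the orbit. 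The combinatorial bookkeeping of which weight lands at which point — this is precisely the formula $\psi^\Gamma(x) = \sum_{\gamma \in \Gamma} \gamma \cdot \psi(\gamma^{-1} \cdot x)$ — comes from the fact that, under the action of $\gamma$ on $\g$, the irreducible $V(\lambda)$ is carried to $V(\gamma_{\mathrm{Out}} \cdot \lambda)$ (as recalled in Section~\ref{sec:ema-background}). I would verify this first on a single orbit using the definition of $\ev^\Gamma_\psi$ via a choice of orbit representatives $\bx \in (\Supp \psi)_\Gamma$, then extend multiplicatively over distinct orbits; here one uses that $\ev^\Gamma_\psi$ is defined through $\ev^\Gamma_\bx$ and, by Lemma~4.13 of \cite{NSS}, is independent of the choice of $\bx$.

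For part~\eqref{theo-item:untwisting-psi}, I would start from $\psi \in \cE^\Gamma$ with $\Supp \psi \subseteq \Gamma \cdot \bx$ for the fixed $\bx \in X_*$. The module $V_\Gamma(\psi)$ is annihilated by $(\g \otimes I_\eta)^\Gamma$ where $\eta$ is the characteristic function of $\bx' := \Supp \psi \cap \bx \in (\Supp \psi)_\Gamma$ (by the argument in the proof of Proposition~\ref{prop:fd-annihilator}). Then $\bU_\bx V_\Gamma(\psi) = V_\Gamma(\psi)^\eta$, whose $(\g \otimes A)$-action factors through $(\g \otimes A)/(\g \otimes I_\eta) \cong \bigoplus_{x \in \bx'} \g$ via the isomorphism of Proposition~\ref{prop:fundamental-quotient-isom}. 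Under this isomorphism, the action of $(\g \otimes A)^\Gamma$ on $V_\Gamma(\psi)$ — which by Proposition~\ref{prop:irred-classification} is $\ev^\Gamma_\bx(\psi(x))_{x \in \bx'}$ — is transported to the action of $\g^{\bx'}$ given by $\bigotimes_{x \in \bx'} V(\psi(x))$; but by construction $\psi_\bx$ is exactly the restriction of $\psi$ to $\bx$, so $V_\Gamma(\psi)^\eta$ lies in the isomorphism class $\ev_{\psi_\bx}$. The one thing to check carefully is that the evaluation map $\ev^\Gamma_\bx$ and the composite $(\g \otimes A)^\Gamma \hookrightarrow \g \otimes A \to (\g \otimes A)/(\g \otimes I_\eta)$ agree after identifying $(\g \otimes A)/(\g \otimes I_\eta) \cong \g^\bx$ via evaluation — this is essentially the commutativity already recorded in Lemma~\ref{lem:ideal-isom-comm-diagram}, specialized appropriately.

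For part~\eqref{theo-item:category-equivalence}, I would show $\bU_\bx \circ \bT_\bx = \id_{\cF_\bx}$ and $\bT_\bx \circ \bU_\bx = \id_{\cF_\bx^\Gamma}$, on objects and on morphisms. Given $V \in \cF_\bx$, choose $\eta$ with $\Supp \eta \subseteq \bx$ large enough that $\g \otimes I_\eta$ annihilates $V$; then $\g \otimes I_\eta \subseteq \g \otimes \fm_x^{\eta(x)}$ for each $x$, the action of $\g \otimes A$ on $V$ factors through $(\g \otimes A)/(\g \otimes I_\eta)$, and $(\g \otimes I_\eta)^\Gamma = (\g \otimes I_\eta) \cap (\g \otimes A)^\Gamma$ annihilates $\bT_\bx V$. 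By Proposition~\ref{prop:fundamental-quotient-isom} the isomorphism $(\g \otimes A)^\Gamma/(\g \otimes I_\eta)^\Gamma \xrightarrow{\cong} (\g \otimes A)/(\g \otimes I_\eta)$ is induced by the inclusion $(\g \otimes A)^\Gamma \hookrightarrow \g \otimes A$; chasing this through the definition of $\bU_\bx$ shows that $(\bT_\bx V)^\eta$ carries exactly the original $\g \otimes A$-action, i.e.\ $\bU_\bx \bT_\bx V = V$. The reverse composition is the same diagram chase read backwards. On morphisms both identities are immediate because, by fullness of the subcategories, a morphism is just a linear map intertwining the relevant (identical, after the identification) actions. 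The main obstacle I anticipate is not any single step but keeping the identifications straight: one must be sure that the annihilating ideal can be chosen uniformly (with $\Supp \eta \subseteq \bx$ rather than merely $\subseteq X_*$) so that $\bU_\bx$ is genuinely defined on all of $\cF_\bx^\Gamma$ and lands in $\cF_\bx$, and that all the evaluation isomorphisms in play are the \emph{same} one — this is exactly what Lemmas~\ref{lem:ideal-equality}, \ref{lem:eta-independence}, and \ref{lem:ideal-isom-comm-diagram} were set up to guarantee, so the proof should reduce to invoking them in the right order. One should also note that $\bU_\bx$ preserves supports (it sends $\ev^\Gamma_\psi$ to $\ev_{\psi_\bx}$ by part~\eqref{theo-item:untwisting-psi}, and $\Supp \psi_\bx \subseteq \bx$), confirming it maps $\cF_\bx^\Gamma$ into $\cF_\bx$, and likewise part~\eqref{theo-item:restrction-functor} with the evident inclusion $\Supp \psi^\Gamma \subseteq \Gamma \cdot \Supp \psi$ confirms $\bT_\bx$ maps $\cF_\bx$ into $\cF_\bx^\Gamma$.
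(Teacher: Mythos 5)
Your proposal is correct and follows essentially the same route as the paper: part~\eqref{theo-item:restrction-functor} is read off from the definition of $\ev^\Gamma_\psi$ via a choice of orbit representatives, part~\eqref{theo-item:untwisting-psi} transports the evaluation action through the isomorphism of Proposition~\ref{prop:fundamental-quotient-isom} (which is induced by the inclusion $(\g \otimes A)^\Gamma \hookrightarrow \g \otimes A$), and part~\eqref{theo-item:category-equivalence} is the diagram chase showing the two mutually inverse quotient isomorphisms compose to recover the original action, with Lemmas~\ref{lem:ideal-equality}, \ref{lem:eta-independence}, and \ref{lem:ideal-isom-comm-diagram} supplying the required independence of the annihilating ideal. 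The one inaccuracy is the aside in part~\eqref{theo-item:restrction-functor} claiming the orbit of $x$ contributes a factor $\bigotimes_{\gamma \in \Gamma} V(\gamma \cdot \psi(x))$ pulled back along $\ev^\Gamma_{\Gamma \cdot x}$: the latter map is not defined (since $\Gamma \cdot x \notin X_*$) and the restricted module is not a tensor product over the orbit; but this remark is not load-bearing, as your subsequent comparison with $\ev^\Gamma_\bx(\psi(x))_{x \in \bx}$ for $\bx \in (\Supp \psi)_\Gamma$ is exactly what carries the argument (and is all the paper itself uses).
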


\begin{proof}
  Part~\eqref{theo-item:restrction-functor} follows immediately from the definition of the evaluation representations given in Section~\ref{sec:ema-background}.

  Now suppose $\bx \in X_*$ and $V \in \cF^\Gamma_\bx$ is irreducible and corresponds to $\psi \in \cE^\Gamma$.  Let $\rho = \left( \bigotimes_{x \in \bx} \rho_x \right) \circ \ev^\Gamma_\bx$ be the corresponding representation.  Then $\rho$ factors through $(\g \otimes A)^\Gamma / (\g \otimes I_\bx)^\Gamma$ and so $\bU_\bx(V)$ is the $(\g \otimes A)$-module given by the composition
  \[ \textstyle
    \g \otimes A \twoheadrightarrow (\g \otimes A)/(\g \otimes I_\bx) \xrightarrow{\cong} (\g \otimes A)^\Gamma/(\g \otimes I_\bx)^\Gamma \cong \g^\bx \xrightarrow{\bigotimes_{x \in \bx} \rho_x} \End V.
  \]
  Since this is precisely the evaluation representation $\left( \bigotimes_{x \in \bx} \rho_x \right) \circ \ev_\bx$ of $\g \otimes A$, which is in the isomorphism class $\ev_{\psi_\bx}$, part~\eqref{theo-item:untwisting-psi} follows.

  Suppose $V \in \cF_\bx$.  Then $V$ is annihilated by some $\g \otimes I_\eta$ and the action of $\g \otimes A$ on $\bU_\bx \bT_\bx (V)$ is given by
  \[
    \g \otimes A \twoheadrightarrow (\g \otimes A)/(\g \otimes I_\eta) \xrightarrow{\cong} (\g \otimes A)^\Gamma/(\g \otimes I_\eta)^\Gamma \xrightarrow{\cong} (\g \otimes A)/(\g \otimes I_\eta) \to \End V,
  \]
  where the two isomorphisms are mutually inverse.  Thus $\bU_\bx \bT_\bx (V) = V$.  One easily verifies that $\bU_\bx \bT_\bx$ is also the identity on morphisms and is therefore the identity functor on $\cF_\bx$.  Similarly, $\bT_\bx \bU_\bx$ is the identity functor on $\cF^\Gamma_\bx$.  This proves part~\eqref{theo-item:category-equivalence}.
\end{proof}

\begin{remark}
  Theorem~\ref{theo:category-equivalence} allows one to translate any reasonable question in the representation theory of finite-dimensional modules for equivariant maps algebras, where $\Gamma$ is abelian and acts freely on $X$, to a corresponding question for untwisted map algebras (generalized current algebras).  For instance, it can be used to reduce the computation of extensions between irreducible finite-dimensional $(\g \otimes A)^\Gamma$-modules to the case of extensions of $(\g \otimes A)$-modules, which were considered in \cite{Kod10}.  In this way, one can give an alternate proof of \cite[Prop.~6.3]{NS11}.
\end{remark}

%
\section{Local Weyl modules} \label{sec:local-weyl-modules}
%

In this section, we define the local Weyl modules for equivariant map algebras.  We begin by reviewing the local Weyl modules for untwisted map algebras.

Fix a triangular decomposition $\g=\mathfrak{n}^-\oplus \mathfrak h\oplus \mathfrak{n}^+$.  Then we have a triangular decomposition of the untwisted map algebra
\[
  \g \otimes A = (\mathfrak{n}^- \otimes A) \oplus (\mathfrak{h} \otimes A) \oplus (\mathfrak{n}^+ \otimes A).
\]
Let $\{e_i,h_i,f_i\}_{i \in I}$ denote a set of Chevalley generators of $\g$ compatible with its triangular decomposition.  In particular, the $f_i$ generate $\mathfrak{n}^-$.

\begin{definition}[Untwisted local Weyl module] \label{def:untwisted-local-Weyl-module}
  Given $\psi\in \cE$, the \emph{(untwisted) local Weyl module} $W(\psi)$ is the $(\g \otimes A)$-module generated by a nonzero vector $w_\psi$ satisfying the relations
  \begin{gather}
    (\mathfrak{n}^+ \otimes A) \cdot w_\psi =0, \\
    \ts (f_i \otimes 1)^{\lambda(h_i)+1} \cdot w_\psi=0, \quad i\in I, \quad \text{where } \lambda = \wt \psi := \sum_{x\in \bx} \psi(x),\\
    \ts \alpha \cdot w_\psi = \left( \sum_{x\in \Supp \psi} \psi(x)(\alpha(x)) \right) w_\psi, \quad \alpha \in \mathfrak{h} \otimes A.
  \end{gather}
\end{definition}

\begin{proposition} \label{prop:Weyl-untwisted-quotient}
  \begin{enumerate}
    \item \cite[Th.~2]{CFK} \label{prop-item:Weyl-fd} For every $\psi \in \cE$, $W(\psi)$ is a finite-dimensional $(\g \otimes A)$-module.

    \item \cite[Prop.~5]{CFK} \label{prop-item:Weyl-untwisted:quotient} Let $V$ be any finite-dimensional $(\g \otimes A)$-module generated by a nonzero element $v \in V$ such that
        \[
          (\mathfrak{n}^+ \otimes A) \cdot v=0 \quad \text{and} \quad (\mathfrak{h} \otimes A) \cdot v = k v.
        \]
        Then there exists $\psi\in \cE$ such that the assignment $w_\psi \mapsto v$ extends to a surjective homomorphism $W(\psi ) \twoheadrightarrow V$ of $(\g \otimes A)$-modules.
  \end{enumerate}
\end{proposition}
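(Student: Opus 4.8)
The plan is to follow the arguments of \cite[Th.~2, Prop.~5]{CFK}, whose main device is the \emph{global} Weyl module. Recall that for $\lambda\in P^+$ one has the $(\g\otimes A)$-module $W(\lambda)$ generated by a vector $w_\lambda$ subject only to $(\mathfrak n^+\otimes A)w_\lambda=0$, $(h_i\otimes1)w_\lambda=\lambda(h_i)w_\lambda$ and $(f_i\otimes1)^{\lambda(h_i)+1}w_\lambda=0$; it carries a commuting action of the commutative algebra $\mathbf{A}_\lambda:=U(\mathfrak h\otimes A)/\Ann_{U(\mathfrak h\otimes A)}(w_\lambda)$, and for $\psi\in\cE$ with $\wt\psi=\lambda$ the local module $W(\psi)$ of Definition~\ref{def:untwisted-local-Weyl-module} is the specialization $W(\lambda)\otimes_{\mathbf{A}_\lambda}k_\psi$, where $k_\psi$ is the one-dimensional $\mathbf{A}_\lambda$-module given by the character of $\mathbf{A}_\lambda$ attached to $\psi$.

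I would dispatch \eqref{prop-item:Weyl-untwisted:quotient} first, as it is essentially formal once the global module is in hand. Since $V$ is finite dimensional and $v$ is killed by $\mathfrak n^+\otimes1$, the vector $v$ is a $\g$-highest-weight vector; let $\lambda\in P^+$ be its $\g$-weight. Applying $\mathfrak{sl}_2$-theory to the copy of $\mathfrak{sl}_2$ spanned by $e_i\otimes1,h_i\otimes1,f_i\otimes1$ acting on the integrable $\g$-module $V$ gives $(f_i\otimes1)^{\lambda(h_i)+1}v=0$ for every $i$, so $w_\lambda\mapsto v$ extends to a surjection $W(\lambda)\twoheadrightarrow V$. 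The hypothesis $(\mathfrak h\otimes A)v=kv$ says that $\mathbf{A}_\lambda$ acts on the line $kv$ through a character, i.e.\ through $\mathbf{A}_\lambda/\mathfrak m$ for a maximal ideal $\mathfrak m$, and since $v$ generates $V$ this forces $V$ to be a quotient of $W(\lambda)\otimes_{\mathbf{A}_\lambda}(\mathbf{A}_\lambda/\mathfrak m)$. Identifying this specialization with $W(\psi)$ for the $\psi\in\cE$ (necessarily with $\wt\psi=\lambda$) corresponding to $\mathfrak m$ — which amounts to the bijection $\maxSpec\mathbf{A}_\lambda\leftrightarrow\{\psi\in\cE:\wt\psi=\lambda\}$ of \cite{CFK} — then yields the asserted surjection $W(\psi)\twoheadrightarrow V$.

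For \eqref{prop-item:Weyl-fd}, PBW together with the relations gives $W(\psi)=U(\mathfrak n^-\otimes A)w_\psi$, so the $\g$-weights of $W(\psi)$ lie in $\lambda-Q^+$ with $\lambda=\wt\psi$. Moreover $e_i\otimes1$ annihilates $w_\psi$ and $f_i\otimes1$ acts nilpotently on $w_\psi$ by the relation $(f_i\otimes1)^{\lambda(h_i)+1}w_\psi=0$; since both elements are $\ad$-nilpotent on $\g\otimes A$, the standard lemma (nilpotent on a generating vector plus $\ad$-nilpotent implies locally nilpotent) shows they act locally nilpotently on $W(\psi)$, so $W(\psi)$ is an integrable $\g$-module and its $\g$-weight set, being $W$-stable and bounded above by $\lambda$, is finite. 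It then remains to bound each $\g$-weight space, for which I would show that $\g\otimes I_\eta$ annihilates $W(\psi)$ for some finitely supported $\eta:X_\rat\to\N$; granting this, $W(\psi)$ is a module over the finite-dimensional Lie algebra $\g\otimes(A/I_\eta)$, and its weight-$\mu$ space is a quotient of the finite-dimensional weight-$(\mu-\lambda)$ subspace of $U(\mathfrak n^-\otimes(A/I_\eta))$, whence $\dim W(\psi)<\infty$. In the global language this annihilation is repackaged as the assertion that $\mathbf{A}_\lambda$ is a finitely generated (hence Noetherian) $k$-algebra and that $W(\lambda)$ is a finitely generated $\mathbf{A}_\lambda$-module, from which $W(\psi)=W(\lambda)\otimes_{\mathbf{A}_\lambda}k_\psi$ is at once finite dimensional over $k$.

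The main obstacle in both parts is the same: controlling the $(\mathfrak h\otimes A)$-action well enough to produce the cofinite ideal $I_\eta$, equivalently the finite generation of $\mathbf{A}_\lambda$ and of $W(\lambda)$ over it and the explicit description of $\maxSpec\mathbf{A}_\lambda$. This is exactly where finite generation of $A$ enters, and it rests on Garland-type identities: applying divided powers $(e_i\otimes a)^{(r)}(f_i\otimes1)^{(r)}$ to $w_\psi$ (respectively $w_\lambda$) and using the defining relations produces, for each $i$ and each $a\in A$, polynomial relations on the action of $\mathfrak h\otimes A$ which, combined with $\mathfrak{sl}_2$-representation theory for each root $\mathfrak{sl}_2$, force the image of $\g\otimes A$ in $\End W(\psi)$ to factor through a finite-dimensional quotient controlled by $\Supp\psi$ and $\wt\psi$. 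By contrast, once this input from \cite{CFK} is granted, \eqref{prop-item:Weyl-untwisted:quotient} is formal; note also that Proposition~\ref{prop:fd-annihilator} (with $\Gamma$ trivial) cannot be invoked to prove \eqref{prop-item:Weyl-fd}, since it presupposes finite-dimensionality, and is instead a consequence to be used downstream.
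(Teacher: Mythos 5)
The paper offers no proof of this proposition at all: both parts are imported verbatim from \cite{CFK} (Th.~2 and Prop.~5 there), so there is nothing internal to compare your argument against. Your sketch is a faithful reconstruction of the route taken in that reference: the global Weyl module $W(\lambda)$, its commuting $\mathbf{A}_\lambda$-action, the identification of the local module of Definition~\ref{def:untwisted-local-Weyl-module} with the specialization $W(\lambda)\otimes_{\mathbf{A}_\lambda}k_\psi$, and the bijection $\maxSpec \mathbf{A}_\lambda \leftrightarrow \{\psi\in\cE : \wt\psi=\lambda\}$. The formal reduction in part~(b) is correct as you present it: $v$ is a $\g$-highest weight vector of some $\lambda\in P^+$, $\mathfrak{sl}_2$-theory gives the $(f_i\otimes 1)^{\lambda(h_i)+1}$-relation, the character hypothesis on $\mathfrak{h}\otimes A$ forces the kernel of $W(\lambda)\twoheadrightarrow V$ to contain $W(\lambda)\mathfrak{m}$ for a maximal ideal $\mathfrak{m}$ of $\mathbf{A}_\lambda$, and the specialization at $\mathfrak{m}$ coincides with the generators-and-relations module $W(\psi)$. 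However, as a standalone proof your text is incomplete exactly where you say it is: the two genuinely hard inputs --- that every character of $\mathbf{A}_\lambda$ (equivalently, of $\mathfrak{h}\otimes A$ on a finite-dimensional highest weight vector) is of evaluation type, i.e.\ corresponds to some $\psi\in\cE$, and that $\g\otimes I_\eta$ annihilates $W(\psi)$ for a suitable cofinite ideal (equivalently, finite generation of $W(\lambda)$ over the finitely generated algebra $\mathbf{A}_\lambda$) --- are the actual content of \cite{CFK} and rest on the Garland-type identities you only gesture at. Citing \cite{CFK} for these is consistent with what the paper itself does, so your proposal is acceptable as a proof-by-reference with correct formal glue, but it should not be mistaken for an independent proof. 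Your closing observation that Proposition~\ref{prop:fd-annihilator} cannot be used to prove part~(a) without circularity is correct and worth keeping.
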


For a subset $Y \subseteq X_\rat$, let
\[
  I_Y = \{f \in A \ |\ f(x) = 0 \ \forall\ x \in Y\}.
\]
For $\psi \in \cE$, we define $I_\psi = I_{\Supp \psi}$.  Note then that $I_\psi = I_\eta$ as in~\eqref{eq:I_eta-def} for
\[
  \eta : X_\rat \to \N,\quad \eta(x) =
  \begin{cases}
    1 & \text{if } x \in \Supp(\psi), \\
    0 & \text{if } x \not \in \Supp(\psi).
  \end{cases}
\]

\begin{proposition} \label{prop:Weyl-untwisted-ann-tensor}
  \begin{enumerate}
    \item \label{prop-item:untwisted-Weyl-annihilator} \cite[Prop.~9]{CFK} If $\psi\in \mathcal{E}$ with $\wt\psi=\lambda\in P^+$, then
      \[
        (\g \otimes I_\psi^N) \cdot W(\psi) =0 \quad \forall\ N \geq \lambda(h_\theta),
      \]
      where $\theta$ is the highest root for $\g$ and $h_\theta$ is the corresponding coroot.

    \item \label{prop-item:Weyl-untwisted:tensor} \cite[Th.~3]{CFK} If $\psi, \psi' \in \mathcal{E}$ such that $\Supp \psi \cap \Supp \psi' = \varnothing$, then
      \[
        W(\psi+\psi') \cong W(\psi) \otimes W(\psi')
      \]
      as $(\g \otimes A)$-modules.

    \item \cite[Lem.~6]{CFK} For $\psi \in \mathcal{E}$, $V(\psi)$ is the unique irreducible quotient of $W(\psi)$ (see Definition~\ref{def:irred-notation}).
  \end{enumerate}
\end{proposition}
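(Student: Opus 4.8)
These three statements are established in \cite{CFK}, and I would reprove them along the lines below; I begin with the last one, which is the most self-contained. The module $W(\psi)$ is finite-dimensional by Proposition~\ref{prop:Weyl-untwisted-quotient}\eqref{prop-item:Weyl-fd} and is generated by $w_\psi$, and PBW together with the defining relations of Definition~\ref{def:untwisted-local-Weyl-module} gives $W(\psi) = U(\mathfrak{n}^- \otimes A)\, w_\psi$, so its $\mathfrak{h}$-weight space of weight $\wt\psi$ is the line $k w_\psi$. Hence $W(\psi)$ has a unique maximal proper submodule, and thus a unique irreducible quotient $L$. To identify $L$, introduce the character $\Lambda_\psi \colon \mathfrak{h}\otimes A \to k$, $h\otimes f \mapsto \sum_{x\in\Supp\psi}\psi(x)(h)f(x)$, and the ``current Verma module'' $M(\Lambda_\psi) = U(\g\otimes A)\otimes_{U((\mathfrak{h}\oplus\mathfrak{n}^+)\otimes A)} k_{\Lambda_\psi}$. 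Both $W(\psi)$ and the evaluation module $V(\psi)$ are nonzero quotients of $M(\Lambda_\psi)$ (for $W(\psi)$ one imposes in addition the relations $(f_i\otimes 1)^{(\wt\psi)(h_i)+1}w_\psi = 0$; for $V(\psi)$ one sends the cyclic vector to the highest weight vector of $V(\psi)$). The same weight-line argument shows that $M(\Lambda_\psi)$ has a unique irreducible quotient; since $V(\psi)$ is an irreducible quotient of it, $V(\psi)$ \emph{is} that unique irreducible quotient, hence also the unique irreducible quotient of $W(\psi)$.

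For part~\eqref{prop-item:untwisted-Weyl-annihilator}: since $\g\otimes I_\psi^N$ is an ideal of $\g\otimes A$ and $W(\psi)$ is generated by $w_\psi$, it suffices to show $(\g\otimes I_\psi^N)\cdot w_\psi = 0$. Writing $\g$ as the sum of $\mathfrak{h}$ and its root spaces, the parts of $\g\otimes I_\psi^N$ lying in $\mathfrak{n}^+\otimes A$ or in $\mathfrak{h}\otimes I_\psi$ annihilate $w_\psi$ directly (using $I_\psi^N\subseteq I_\psi$ and that elements of $I_\psi$ vanish on $\Supp\psi$). For a negative root vector $x_{-\beta}$, $\beta\in\Delta^+$, the maximality of the highest root $\theta$ lets one write $x_{-\beta}$ as an iterated bracket of the $e_i$ against $f_\theta$; because $\mathfrak{n}^+\otimes 1$ annihilates $w_\psi$ and bracketing with $\mathfrak{n}^+\otimes 1$ leaves the $A$-factor unchanged, this gives $(x_{-\beta}\otimes g)\cdot w_\psi = (e_{i_1}\otimes 1)\cdots(e_{i_k}\otimes 1)(f_\theta\otimes g)\cdot w_\psi$ for $g\in I_\psi^N$, reducing everything to $(f_\theta\otimes I_\psi^N)\cdot w_\psi = 0$. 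Restricting to $\mathfrak{sl}_2^\theta = \langle e_\theta,h_\theta,f_\theta\rangle$, the vector $w_\psi$ is killed by $e_\theta\otimes A$ and by $h_\theta\otimes I_\psi$, has $h_\theta\otimes 1$ acting by the scalar $m := \lambda(h_\theta)$, and satisfies $(f_\theta\otimes 1)^{m+1}w_\psi = 0$ (as $W(\psi)$ is a finite-dimensional, hence integrable, $\g$-module and $w_\psi$ is a weight-$\lambda$ vector killed by $e_\theta$). These are precisely the defining relations of the $\mathfrak{sl}_2$-local Weyl module, so the bound $N\geq m$ follows from part~\eqref{prop-item:untwisted-Weyl-annihilator} in the case $\g=\mathfrak{sl}_2$, which is the classical Garland/Chari--Pressley computation.

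For part~\eqref{prop-item:Weyl-untwisted:tensor}, set $\mu = \wt\psi + \wt\psi'\in P^+$. Using the coproduct and $\Supp\psi\cap\Supp\psi' = \varnothing$, one checks that $w_\psi\otimes w_{\psi'}\in W(\psi)\otimes W(\psi')$ is annihilated by $\mathfrak{n}^+\otimes A$, is an $\mathfrak{h}\otimes A$-eigenvector with character $\Lambda_\psi + \Lambda_{\psi'} = \Lambda_{\psi+\psi'}$, and is killed by $(f_i\otimes 1)^{\mu(h_i)+1}$ (since the $\g$-submodule it generates is $\cong V(\mu)$). By part~\eqref{prop-item:untwisted-Weyl-annihilator} the $\g\otimes A$-actions on $W(\psi)$ and $W(\psi')$ factor through $(\g\otimes A)/(\g\otimes I_\psi^N)$ and $(\g\otimes A)/(\g\otimes I_{\psi'}^N)$ for $N\gg 0$; as $I_\psi^N$ and $I_{\psi'}^N$ are coprime, $\g\otimes A$ surjects onto the product of these two quotient algebras, $W(\psi)\otimes W(\psi')$ is the external tensor product of the corresponding modules, and in particular is generated by $w_\psi\otimes w_{\psi'}$. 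Thus the defining relations of $W(\psi+\psi')$ yield a surjection $W(\psi+\psi')\twoheadrightarrow W(\psi)\otimes W(\psi')$. The step I expect to be the real obstacle is proving this map is injective, equivalently $\dim W(\psi+\psi')\le \dim W(\psi)\cdot\dim W(\psi')$: producing the surjection and checking the relations is formal, but ruling out a proper quotient needs genuine input. I would obtain it either from the known dimension/character formula for local Weyl modules, or, following \cite{CFK}, from the homological characterization of Weyl modules, whose $\Hom$- and $\Ext^1$-conditions are inherited by $W(\psi)\otimes W(\psi')$ via a Künneth-type argument over the coprime ideals $I_\psi^N$, $I_{\psi'}^N$, so that $W(\psi)\otimes W(\psi')$ \emph{is} the Weyl module $W(\psi+\psi')$.
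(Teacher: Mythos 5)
The paper itself offers no argument here --- Proposition~\ref{prop:Weyl-untwisted-ann-tensor} is simply quoted from \cite{CFK} --- so your proposal can only be measured against the standard proofs in the literature, and it tracks them accurately. Your part (c) (one-dimensional top weight line, hence a unique maximal submodule; identification of the simple quotient by comparing $W(\psi)$ and $V(\psi)$ as quotients of the induced module $M(\Lambda_\psi)$) and your reduction in part (a) ($\g\otimes I_\psi^N$ is an ideal, so it suffices to kill $w_\psi$; the $\mathfrak{h}\otimes I_\psi$ and $\mathfrak{n}^+\otimes A$ pieces are immediate; negative root vectors are reached from $f_\theta$ by bracketing with the $e_i\otimes 1$, reducing everything to the rank-one case, where for semisimple $\g$ one should really argue one simple ideal at a time with its own highest root) are sound, and complete modulo the rank-one Garland/Chari--Pressley computation, which is fair to cite given that the paper cites the entire proposition. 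For part (b) you correctly isolate the genuine content: the surjection $W(\psi+\psi')\twoheadrightarrow W(\psi)\otimes W(\psi')$ and the cyclicity of the target (your CRT argument over the coprime ideals is right) are formal, and the issue is injectivity. Of your two proposed remedies, the homological one is the correct one and is in fact how \cite{CFK} proves the tensor product theorem (compare the introduction's remark that the homological reformulation of the universal property yields a simplified proof): one checks that $W(\psi)\otimes W(\psi')$ is a maximal weight module satisfying the $\Hom$ and $\Ext^1$ vanishing of Proposition~\ref{prop:untwistedhom}, the $\Ext^1$ part by splitting all modules involved over $A/(I_\psi^N I_{\psi'}^N)\cong A/I_\psi^N\times A/I_{\psi'}^N$. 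By contrast, the fallback via a ``known dimension/character formula'' is not actually available at this level of generality: such formulas are theorems for $\g\otimes k[t]$ and smooth points, whereas for arbitrary finitely generated $A$ the dimension of a single-point local Weyl module can depend on the local ring at the point, and knowing $\dim W(\psi+\psi')=\dim W(\psi)\cdot\dim W(\psi')$ in advance is essentially the statement to be proved. So: correct in outline, following the same route as \cite{CFK}, with the two hard ingredients (the rank-one computation and the homological injectivity argument) invoked rather than reproved.
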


\begin{remark}
  In the case that $A$ is the coordinate algebra of an affine algebraic variety, Proposition~\ref{prop:Weyl-untwisted-quotient} and parts~\eqref{prop-item:untwisted-Weyl-annihilator} and~\eqref{prop-item:Weyl-untwisted:tensor} of Proposition~\ref{prop:Weyl-untwisted-ann-tensor} are proven in \cite{FL04} (Theorems 1, 2, and 5, and Proposition~7).
\end{remark}

We now turn our attention to the equivariant map algebras.  For a $(\g \otimes A)$-module $U$, let $\rho_U : \g \otimes A \to \End_k U$ be the corresponding representation.

\begin{lemma} \label{lem:Weyl-module-automs}
  Suppose $\psi \in \cE^\Gamma$ and $\bx \in (\Supp \psi)_\Gamma$.  Then, for $\gamma \in \Gamma$,
  \[
    \rho_{W(\psi_\bx)} \circ \gamma^{-1} \cong \rho_{W(\psi_{\gamma \cdot \bx})},
  \]
  where $\gamma \cdot \bx = \{ \gamma \cdot x\ |\ x \in \bx\}$.
\end{lemma}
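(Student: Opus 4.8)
The plan is to trace through how the automorphism $\gamma^{-1}$ acts on the defining data of the local Weyl module $W(\psi_\bx)$ and check that it transports the presentation of $W(\psi_\bx)$ to that of $W(\psi_{\gamma\cdot\bx})$. First I would recall the action of $\gamma$ on $\g\otimes A$: it sends $a\otimes f$ to $(\gamma\cdot a)\otimes(\gamma\cdot f)$, and since $\gamma\cdot\fm_x=\fm_{\gamma\cdot x}$ it carries the ideal $\g\otimes I_{\psi_\bx}=\g\otimes I_\bx$ to $\g\otimes I_{\gamma\cdot\bx}$. Thus $\gamma$ induces an isomorphism of Lie algebras $(\g\otimes A)/(\g\otimes I_\bx)\xrightarrow{\cong}(\g\otimes A)/(\g\otimes I_{\gamma\cdot\bx})$, compatible with the evaluation maps $\ev_\bx$ and $\ev_{\gamma\cdot\bx}$ under the coordinatewise automorphism $\g^\bx\to\g^{\gamma\cdot\bx}$ sending the $x$-component by $\gamma$. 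On the level of representations this says precisely that $\rho\circ\gamma^{-1}$ is an evaluation representation supported on $\gamma\cdot\bx$ whenever $\rho$ is one supported on $\bx$.

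The key step is then to see what $\gamma^{-1}$ does to the \emph{triangular} data. Twisting $W(\psi_\bx)$ by $\gamma^{-1}$ gives a module generated by a vector $w$ killed by $(\mathfrak n^+\otimes A)\cdot\gamma = \gamma\cdot(\mathfrak n^+\otimes A)$; but $\gamma$ need not preserve $\mathfrak n^+$, only its \emph{outer} part matters. Here I would invoke the fact recalled in Section~\ref{sec:ema-background} (from \cite[VIII, \S7.2, Rem.~1]{Bou75}): if $\rho$ has highest weight $\lambda\in P^+$ then $\rho\circ\gamma^{-1}$ has highest weight $\gamma_{\mathrm{Out}}\cdot\lambda$, where $\gamma_{\mathrm{Out}}$ is the image of $\gamma$ under $\Aut\g\twoheadrightarrow\Out\g$. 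Applying this pointwise, $\rho_{W(\psi_\bx)}\circ\gamma^{-1}$, when restricted to $\g=\g\otimes k$, has the same $\g$-structure as $W(\psi_{\gamma\cdot\bx})$ provided one uses the twisted weights $\gamma_{\mathrm{Out}}\cdot\psi(x)$ at the points $\gamma\cdot x$; but since $\Gamma$ acts on $P^+$ via $\Out\g$, the $\Gamma$-equivariance of $\psi$ means $\psi(\gamma\cdot x)=\gamma\cdot\psi(x)=\gamma_{\mathrm{Out}}\cdot\psi(x)$, so the twisted data is exactly $\psi_{\gamma\cdot\bx}$. More cleanly: I would note that $W(\psi_\bx)$ and $W(\psi_{\gamma\cdot\bx})$ are, by Proposition~\ref{prop:Weyl-untwisted-ann-tensor}(a), modules for the finite-dimensional quotients $(\g\otimes A)/(\g\otimes I_\bx^N)$ and $(\g\otimes A)/(\g\otimes I_{\gamma\cdot\bx}^N)$ respectively (for $N$ large), and since $\gamma$ carries one quotient algebra isomorphically onto the other, pulling $W(\psi_{\gamma\cdot\bx})$ back along $\gamma^{-1}$ yields a module satisfying the defining universal property of $W(\psi_\bx)$; conversely pushing $W(\psi_\bx)$ forward yields one satisfying that of $W(\psi_{\gamma\cdot\bx})$, so the two are isomorphic and the isomorphism is exactly the claimed one.

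Concretely I would argue: let $w$ generate $W(\psi_\bx)$ and consider the module $\rho_{W(\psi_\bx)}\circ\gamma^{-1}$, with the same underlying space and generator $w$. I would verify the three relations of Definition~\ref{def:untwisted-local-Weyl-module} for $W(\psi_{\gamma\cdot\bx})$ hold on $w$ in this twisted module — using that $\gamma$ maps $\g\otimes I_{\gamma\cdot\bx}$ into $\g\otimes I_\bx$ to handle $(\mathfrak n^+\otimes A)$-invariance and the $(\mathfrak h\otimes A)$-eigenvalue condition after a choice of triangular decomposition adapted via $\gamma_{\mathrm{Out}}$, and the Bourbaki remark for the $(f_i\otimes 1)^{\lambda(h_i)+1}$ relations — to get a surjection $W(\psi_{\gamma\cdot\bx})\twoheadrightarrow \rho_{W(\psi_\bx)}\circ\gamma^{-1}$. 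Running the same argument with $\gamma$ in place of $\gamma^{-1}$ gives a surjection the other way, and comparing dimensions forces both to be isomorphisms. The main obstacle I anticipate is the bookkeeping around the generator $w$ not being a highest-weight vector for the \emph{original} triangular decomposition after twisting: one must either choose a $\gamma$-compatible triangular decomposition or phrase everything in terms of the universal property / finite-dimensional quotient algebras, which is why I would favor the quotient-algebra formulation above as the cleanest route, reducing the statement to the transport-of-structure fact that $\gamma$ intertwines $\ev_\bx$ and $\ev_{\gamma\cdot\bx}$ together with the uniqueness in Proposition~\ref{prop:Weyl-untwisted-quotient}(b).
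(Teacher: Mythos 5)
Your closing move (mutual surjections plus finite-dimensionality) is the same as the paper's, and you correctly sense that only the outer part of $\gamma$ should matter. But the heart of the argument --- actually producing a surjection $W(\psi_{\gamma\cdot\bx}) \twoheadrightarrow \rho_{W(\psi_\bx)}\circ\gamma^{-1}$ --- is not carried out, and neither of the routes you sketch for it works as stated. Checking the relations of Definition~\ref{def:untwisted-local-Weyl-module} on the original generator $w$ fails whenever $\gamma$ acts on $\g$ with a nontrivial inner part, which is allowed in this paper ($\Gamma$ acts on $\g$ by arbitrary automorphisms, cf.\ the multiloop example): in the twisted module the condition ``$(\mathfrak{n}^+\otimes A)\cdot w=0$'' unwinds to $\rho_{W(\psi_\bx)}\bigl((\gamma^{-1}\cdot\mathfrak{n}^+)\otimes A\bigr)w=0$, and $\gamma^{-1}\cdot\mathfrak{n}^+\neq\mathfrak{n}^+$ in general, so $w$ is neither annihilated by $\mathfrak{n}^+\otimes A$ nor an $\mathfrak{h}\otimes A$-eigenvector for the fixed triangular decomposition; your remark that $\gamma$ interchanges $\g\otimes I_\bx$ and $\g\otimes I_{\gamma\cdot\bx}$ has no bearing on these relations, which involve all of $A$. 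The ``cleanest route'' via the finite-dimensional quotient algebras begs the same question: transport of structure along $\gamma$ produces a module with a universal property relative to the \emph{transported} triangular decomposition $(\gamma\cdot\mathfrak{n}^-,\gamma\cdot\mathfrak{h},\gamma\cdot\mathfrak{n}^+)$, whereas Definition~\ref{def:untwisted-local-Weyl-module} and Proposition~\ref{prop:Weyl-untwisted-quotient}\eqref{prop-item:Weyl-untwisted:quotient} are tied to the fixed one, so no identification with $W(\psi_{\gamma\cdot\bx})$ follows from transport of structure alone. (Also note that the Bourbaki remark quoted in the paper is about irreducible $\g$-modules; it does not by itself give relations in a cyclic non-irreducible $(\g\otimes A)$-module.)

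To close the gap you need one of two genuinely additional inputs. Either prove the auxiliary fact that twisting a finite-dimensional $(\g\otimes A)$-module by an inner automorphism of $\g$ (extended to $\g\otimes A$ through the first tensor factor) yields an isomorphic module --- e.g.\ conjugate by $\exp\rho(n\otimes 1)$ for $n$ nilpotent --- which reduces you to the case where $\gamma$ acts by a diagram automorphism, where your relation-checking on $w$ does go through; or argue as the paper does with the fixed triangular decomposition: the twisted module is a finite-dimensional $\g$-module whose weights lie in $\gamma\cdot\lambda - Q^+$ with one-dimensional top weight space $\gamma\cdot\lambda$ (here $\lambda=\wt\psi_\bx$), its unique irreducible quotient is $V(\psi_{\gamma\cdot\bx})$ (this is where the $\Gamma$-equivariance of $\psi$ enters), and this forces any nonzero vector of weight $\gamma\cdot\lambda$ to be cyclic; only then does Proposition~\ref{prop:Weyl-untwisted-quotient}\eqref{prop-item:Weyl-untwisted:quotient} give the desired surjection, after which your symmetry-and-dimension argument finishes the proof.
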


\begin{proof}
  Let $W(\psi_\bx)^\gamma$ be the $(\g \otimes A)$-module corresponding to the representation $\rho_{W(\psi_\bx)} \circ \gamma^{-1}$.  Recall that we identify $\g$ with the subalgebra $\g \otimes k$ of $\g \otimes A$.  Thus, via restriction, we can view $W(\psi_\bx)$ and $W(\psi_\bx)^\gamma$ as $\g$-modules.  Recall that $W(\psi_\bx)$ is a finite-dimensional $\g$-module with $\wt W(\psi_\bx) \subseteq \lambda - Q_+$, where $\lambda = \sum_{x \in \bx} \psi(x)$.  It follows that $W(\psi_\bx)^\gamma$ is a finite-dimensional $\g$-module with $\wt W(\psi_\bx)^\gamma \subseteq \gamma \cdot \lambda - Q_+$.  Furthermore, the $\gamma \cdot \lambda$ weight space of $W(\psi_\bx)^\gamma$ is one-dimensional since the $\lambda$ weight space of $W(\psi_\bx)$ is one-dimensional.

  We also know that $W(\psi_\bx)$ has unique irreducible quotient $V(\psi_\bx)$.  By the definition of $\cE^\Gamma$, we have that $\rho_{V(\psi_{\bx})} \cdot \gamma^{-1} \cong \rho_{V({\psi_{\gamma \cdot \bx}})}$.  Thus $W(\psi_\bx)^\gamma$ has unique irreducible quotient $V(\psi_{\gamma \cdot \bx})$.  Let $v \in W(\psi_\bx)^\gamma$ be a nonzero vector of weight $\gamma \cdot \lambda$ and let $U$ be the smallest $(\g \otimes A)$-submodule of $W(\psi_\bx)^\gamma$ containing $v$.  If $U \ne W(\psi_\bx)^\gamma$, then $U$ is contained in the unique maximal submodule of $W(\psi_\bx)^\gamma$.  But this contradicts the fact that the unique irreducible quotient of $W(\psi_\bx)^\gamma$ has a nonzero $\gamma \cdot \lambda$ weight space.  Therefore $U=W(\psi_\bx)^\gamma$ and so $v$ is a cyclic vector.  It then follows from Proposition~\ref{prop:Weyl-untwisted-quotient}\eqref{prop-item:Weyl-untwisted:quotient} that $W(\psi_\bx)^\gamma$ is isomorphic to a quotient of $W(\psi_{\gamma \cdot \bx})$.  By symmetry, $W(\psi_{\gamma \cdot \bx})$ is also isomorphic to a quotient of $W(\psi_\bx)^\gamma$.  Since these modules are finite-dimensional, we conclude that $W(\psi_\bx)^\gamma \cong W(\psi_{\gamma \cdot x})$.
\end{proof}

\begin{proposition} \label{prop:Weyl-restriction-independence}
  Suppose $\psi \in \cE^\Gamma$ and $\bx, \bx' \in (\Supp \psi)_\Gamma$.  Then the restriction to $(\g \otimes A)^\Gamma$-modules of the Weyl modules $W(\psi_\bx)$ and $W(\psi_{\bx'})$ for $\g \otimes A$ are isomorphic (as $(\g \otimes A)^\Gamma$-modules).
\end{proposition}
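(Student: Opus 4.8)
The plan is to reduce to the case where $\bx'$ is obtained from $\bx$ by replacing a single point $x_0$ by $\gamma_0 \cdot x_0$ for some $\gamma_0 \in \Gamma$, and then to bootstrap using the tensor product property of Proposition~\ref{prop:Weyl-untwisted-ann-tensor}\eqref{prop-item:Weyl-untwisted:tensor} together with Lemma~\ref{lem:Weyl-module-automs}. More precisely, write $\Supp \psi$ as a disjoint union of $\Gamma$-orbits $O_1, \dots, O_r$, and decompose $\psi = \psi_1 + \cdots + \psi_r$ where $\psi_j$ is the restriction of $\psi$ to $O_j$. For $\bx, \bx' \in (\Supp \psi)_\Gamma$, we have $\bx = \{x_1, \dots, x_r\}$ and $\bx' = \{x_1', \dots, x_r'\}$ with $x_j, x_j' \in O_j$, so there are $\gamma_j \in \Gamma$ with $x_j' = \gamma_j \cdot x_j$. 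Since the orbits $O_1, \dots, O_r$ are pairwise disjoint and each $\psi_{j,\bx}$ (in the notation of the lemma) is supported at the single point $x_j$, the functions $(\psi_j)_{\bx}$ have pairwise disjoint supports, and likewise for $(\psi_j)_{\bx'}$. Hence by Proposition~\ref{prop:Weyl-untwisted-ann-tensor}\eqref{prop-item:Weyl-untwisted:tensor},
\[
  W(\psi_\bx) \cong W((\psi_1)_{\bx}) \otimes \cdots \otimes W((\psi_r)_{\bx}), \qquad
  W(\psi_{\bx'}) \cong W((\psi_1)_{\bx'}) \otimes \cdots \otimes W((\psi_r)_{\bx'}),
\]
as $(\g \otimes A)$-modules, and hence also as $(\g \otimes A)^\Gamma$-modules (the tensor product is taken over $k$, so restriction to the subalgebra $(\g \otimes A)^\Gamma$ commutes with it, using the coproduct on the universal enveloping algebra).

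The next step is to compare each tensor factor. Here I would apply Lemma~\ref{lem:Weyl-module-automs}: since $\psi$ is $\Gamma$-equivariant, $\psi(x_j') = \psi(\gamma_j \cdot x_j) = \gamma_j \cdot \psi(x_j)$, and the lemma gives $\rho_{W((\psi_j)_{\bx})} \circ \gamma_j^{-1} \cong \rho_{W((\psi_j)_{\bx'})}$, i.e.\ $W((\psi_j)_{\bx'})$ is the pullback of $W((\psi_j)_{\bx})$ along the automorphism $\gamma_j^{-1}$ of $\g \otimes A$. The key observation is now that pulling back a $(\g \otimes A)$-module along an element $\gamma \in \Gamma$ does \emph{not} change its isomorphism class as a $(\g \otimes A)^\Gamma$-module: if $U^\gamma$ denotes the pullback of $U$ along $\gamma^{-1}$, then the identity map on the underlying vector space is an isomorphism $U \to U^\gamma$ of $(\g \otimes A)^\Gamma$-modules, because for $\alpha \in (\g \otimes A)^\Gamma$ we have $\gamma^{-1} \cdot \alpha = \alpha$, so $\alpha$ acts by the same operator on $U$ and on $U^\gamma$. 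Applying this with $U = W((\psi_j)_{\bx})$ and $\gamma = \gamma_j$, we get $W((\psi_j)_{\bx'}) \cong W((\psi_j)_{\bx})$ as $(\g \otimes A)^\Gamma$-modules for each $j$.

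Combining the two displays with this factor-by-factor isomorphism yields $W(\psi_\bx) \cong W(\psi_{\bx'})$ as $(\g \otimes A)^\Gamma$-modules, as desired. I do not expect a serious obstacle here; the only point requiring a little care is the compatibility of restriction to $(\g \otimes A)^\Gamma$ with the tensor product decomposition of Proposition~\ref{prop:Weyl-untwisted-ann-tensor}\eqref{prop-item:Weyl-untwisted:tensor}, but this is immediate because the $\g \otimes A$-module structure on a tensor product is defined via the standard cocommutative coproduct, which restricts to the coproduct on $U((\g \otimes A)^\Gamma)$. An alternative, perhaps cleaner, packaging avoids the explicit tensor decomposition entirely: one checks directly that for any single $\gamma \in \Gamma$ the identity map gives an isomorphism of $(\g \otimes A)^\Gamma$-modules $W(\psi_\bx)^\gamma \cong W(\psi_\bx)$ (restricting the action as above), combines this with Lemma~\ref{lem:Weyl-module-automs} to handle a one-point move $\bx \mapsto \bx'$ differing only in one orbit representative, and then iterates over the orbits. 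Either route gives the result; I would present the tensor-product version since it makes the $r$ moves simultaneous rather than sequential.
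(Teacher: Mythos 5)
Your proof is correct and follows essentially the same route as the paper: the key points in both are Lemma~\ref{lem:Weyl-module-automs} combined with the observation that pulling back along $\gamma \in \Gamma$ leaves the $(\g \otimes A)^\Gamma$-action unchanged (since $\gamma$ fixes $(\g \otimes A)^\Gamma$ pointwise), together with the tensor product property of Proposition~\ref{prop:Weyl-untwisted-ann-tensor}\eqref{prop-item:Weyl-untwisted:tensor} to pass from a single orbit to the general case. The only difference is organizational: you decompose into orbit factors first and compare factor by factor, whereas the paper handles the single-orbit case first and then invokes the tensor decomposition.
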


\begin{proof}
  We first prove the result in the case that the support of $\psi$ consists of a single $\Gamma$-orbit.  Suppose $x,x' \in \Supp \psi$.  Then there exist a unique $\gamma \in \Gamma$ such that $x' = \gamma \cdot x$.  By Lemma~\ref{lem:Weyl-module-automs}, we have
  \[
    \rho_{W(\psi_x)} \circ \gamma^{-1} \cong \rho_{W(\psi_{x'})}.
  \]
  Since the restriction of the automorphism $\gamma^{-1}$ to $(\g \otimes A)^\Gamma$ is trivial, it follows immediately that the restrictions of $\rho_{W(\psi_x)}$ and $\rho_{W(\psi_{x'})}$ to $(\g \otimes A)^\Gamma$ are isomorphic.  The general result where the support of $\psi$ is a union of $\Gamma$-orbits now follows from Proposition~\ref{prop:Weyl-untwisted-ann-tensor}\eqref{prop-item:Weyl-untwisted:tensor}.
\end{proof}

\begin{definition}[Twisted local Weyl module]
  For $\psi \in \cE^\Gamma$, we define $W_\Gamma(\psi)$ to be the restriction to $(\g \otimes A)^\Gamma$-modules of the Weyl module $W(\psi_\bx)$ for $\g \otimes A$, for some choice of $\bx \in (\Supp \psi)_\Gamma$.  In other words, $W_\Gamma(\psi) := \bT (W(\psi_\bx))$.  By Proposition~\ref{prop:Weyl-restriction-independence}, $W_\Gamma(\psi)$ is independent of the choice of $\bx$ (up to isomorphism).  We call $W_\Gamma(\psi)$ the \emph{(twisted) local Weyl module} of $(\g \otimes A)^\Gamma$ associated to $\psi$.
\end{definition}

\begin{lemma} \label{lem:Weyl-simple-correspondences}
  For $\psi \in \mathcal{E}^\Gamma$ and $\bx \in (\Supp \psi)_\Gamma$, we have $\bU_\bx(W_\Gamma(\psi)) = W(\psi_\bx)$ and $\bU_\bx(V_\Gamma(\psi)) = V(\psi_\bx)$.
\end{lemma}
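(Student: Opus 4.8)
The plan is to reduce both equalities to the formal properties of the twisting and untwisting functors recorded in Theorem~\ref{theo:category-equivalence}; the only nonformal ingredient will be the verification that $W(\psi_\bx)$ actually lies in the subcategory $\cF_\bx$.

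First I would handle the statement about the irreducibles, which is immediate. Since $\psi \in \cE^\Gamma$, its support is $\Gamma$-invariant, so $\Gamma \cdot \bx = \Supp \psi$ and the irreducible module $V_\Gamma(\psi)$ (a representative of the isomorphism class $\ev^\Gamma_\psi$) has support $\Gamma \cdot \bx$; in particular $V_\Gamma(\psi) \in \cF^\Gamma_\bx$. By Theorem~\ref{theo:category-equivalence}\eqref{theo-item:untwisting-psi}, the functor $\bU_\bx$ carries the isomorphism class $\ev^\Gamma_\psi$ to $\ev_{\psi_\bx}$, which by definition is the isomorphism class of $V(\psi_\bx)$. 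Hence $\bU_\bx(V_\Gamma(\psi)) = V(\psi_\bx)$.

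For the statement about Weyl modules, the heart of the matter is the claim $W(\psi_\bx) \in \cF_\bx$. Finite-dimensionality is Proposition~\ref{prop:Weyl-untwisted-quotient}\eqref{prop-item:Weyl-fd}. For the condition on the supports of the irreducible constituents, note that $\Supp \psi_\bx = \bx$ (since $\bx \subseteq \Supp \psi$, so $\psi(x) \ne 0$ for $x \in \bx$); hence $I_{\psi_\bx} = I_\bx$, and Proposition~\ref{prop:Weyl-untwisted-ann-tensor}\eqref{prop-item:untwisted-Weyl-annihilator} gives $(\g \otimes I_\bx^N) \cdot W(\psi_\bx) = 0$ for all sufficiently large $N$. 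It then suffices to observe that any irreducible finite-dimensional $\g \otimes A$-module annihilated by $\g \otimes I_\bx^N$ has support contained in $\bx$: by the $\Gamma = \{1\}$ case of Proposition~\ref{prop:irred-classification} such a module is an evaluation module $V(\phi)$, and if some $y \in \Supp \phi$ were not in $\bx$, then $I_\bx^N \not\subseteq \fm_y$, so one may choose $f \in I_\bx^N$ with $f(y) \ne 0$ and $a \in \g$ so that $a \otimes f$ acts nontrivially on $V(\phi)$ (its image under evaluation at $y$ is $f(y)\,a$ acting on the nontrivial $\g$-module $V(\phi(y))$), contradicting $a \otimes f \in \g \otimes I_\bx^N$. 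Thus every irreducible constituent of $W(\psi_\bx)$ is supported in $\bx$, so $W(\psi_\bx) \in \cF_\bx$.

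Finally I would assemble the pieces. By Proposition~\ref{prop:Weyl-restriction-independence} we may assume, up to isomorphism, that $W_\Gamma(\psi) = \bT(W(\psi_\bx))$ for the given $\bx$; since $W(\psi_\bx) \in \cF_\bx$ this is the same as $\bT_\bx(W(\psi_\bx))$. Applying Theorem~\ref{theo:category-equivalence}\eqref{theo-item:category-equivalence}, which asserts that $\bU_\bx \bT_\bx$ is the identity functor on $\cF_\bx$, then yields $\bU_\bx(W_\Gamma(\psi)) = W(\psi_\bx)$, and since $\bU_\bx$ is a functor the conclusion is insensitive to the choice of $\bx$ used to define $W_\Gamma(\psi)$. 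I expect the main (albeit routine) obstacle to be the verification that $W(\psi_\bx) \in \cF_\bx$; everything else is bookkeeping with the functors $\bT_\bx$, $\bU_\bx$ and the already-established properties of untwisted local Weyl modules.
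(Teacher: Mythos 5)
Your proof is correct and takes essentially the same route as the paper: the paper's proof simply notes $W_\Gamma(\psi) = \bT_\bx(W(\psi_\bx))$ and applies $\bU_\bx\bT_\bx = \mathrm{id}$ from Theorem~\ref{theo:category-equivalence}, handling the irreducible case via part~\eqref{theo-item:untwisting-psi} of that theorem, exactly as you do. The only difference is that you explicitly verify $W(\psi_\bx) \in \cF_\bx$ (via Proposition~\ref{prop:Weyl-untwisted-ann-tensor}\eqref{prop-item:untwisted-Weyl-annihilator} and the classification of irreducibles), a point the paper takes for granted here and only invokes later, so your write-up is just a more detailed version of the same argument.
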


\begin{proof}
  By definition, $W_\Gamma(\psi) = \bT_\bx(W(\psi_\bx))$.  Thus, by Theorem~\ref{theo:category-equivalence}, we have
  \[
    \bU_\bx(W_\Gamma(\psi)) = \bU_\bx \bT_\bx (W(\psi_\bx)) = W(\psi_\bx).
  \]
  The proof of the second statement is analogous (see the proof of Theorem~\ref{theo:category-equivalence}\eqref{theo-item:untwisting-psi}).
\end{proof}

\begin{proposition}[Tensor product property]
  If $\psi,\psi' \in \mathcal{E}^\Gamma$ have disjoint support, then $W_\Gamma(\psi + \psi') \cong W_\Gamma(\psi) \otimes W_\Gamma(\psi')$.
\end{proposition}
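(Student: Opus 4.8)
The plan is to reduce the twisted tensor product property to the untwisted one (Proposition~\ref{prop:Weyl-untwisted-ann-tensor}\eqref{prop-item:Weyl-untwisted:tensor}) via the twisting functor $\bT$. First I would observe that since $\psi$ and $\psi'$ have disjoint support, so do $\psi^\Gamma$ and $(\psi')^\Gamma$ in the sense that $\Supp \psi$ and $\Supp \psi'$ lie in disjoint unions of $\Gamma$-orbits; hence one can choose $\bx \in (\Supp \psi)_\Gamma$ and $\bx' \in (\Supp \psi')_\Gamma$ with $\bx \cap \bx' = \varnothing$ and, crucially, with $\bx \cup \bx' \in X_*$ (no two chosen points share a $\Gamma$-orbit). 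Then $(\psi + \psi')_{\bx \cup \bx'} = \psi_\bx + \psi'_{\bx'}$ as elements of $\cE$, and these two summands have disjoint support in $X_\rat$.

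Next I would apply the untwisted tensor product property: by Proposition~\ref{prop:Weyl-untwisted-ann-tensor}\eqref{prop-item:Weyl-untwisted:tensor},
\[
  W\big((\psi+\psi')_{\bx \cup \bx'}\big) = W(\psi_\bx + \psi'_{\bx'}) \cong W(\psi_\bx) \otimes W(\psi'_{\bx'})
\]
as $(\g \otimes A)$-modules. Now apply the twisting functor $\bT$ (restriction to $(\g \otimes A)^\Gamma$). Restriction obviously commutes with tensor products of modules, so
\[
  W_\Gamma(\psi + \psi') = \bT\big(W((\psi+\psi')_{\bx\cup\bx'})\big) \cong \bT(W(\psi_\bx)) \otimes \bT(W(\psi'_{\bx'})) = W_\Gamma(\psi) \otimes W_\Gamma(\psi'),
\]
where the outer equalities are the definitions of the twisted Weyl modules together with the independence-of-$\bx$ statement of Proposition~\ref{prop:Weyl-restriction-independence}.

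The one genuine point that needs care — and what I expect to be the main (though modest) obstacle — is the very first step: verifying that $\bx \cup \bx'$ can be chosen to lie in $X_*$ and that under this choice $(\psi + \psi')_{\bx \cup \bx'}$ really decomposes as $\psi_\bx + \psi'_{\bx'}$ with disjoint supports. This is where the disjointness hypothesis on $\Supp \psi$ and $\Supp \psi'$ (as $\Gamma$-equivariant functions, so their supports are disjoint unions of full $\Gamma$-orbits) gets used: disjoint orbits let us pick orbit representatives independently on each piece, and the resulting union contains exactly one point per orbit. Everything else is formal: restriction is a tensor-compatible functor, and the definition of $W_\Gamma$ via $\bT$ together with Proposition~\ref{prop:Weyl-restriction-independence} handles the passage between the two sides. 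I would also remark that one could alternatively phrase the argument entirely on the equivariant side using the untwisting functor $\bU_{\bx\cup\bx'}$ and Lemma~\ref{lem:Weyl-simple-correspondences}, but the direct restriction argument is cleaner.
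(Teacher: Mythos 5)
Your proof is correct and follows essentially the same route as the paper: choose $\bx \in (\Supp \psi)_\Gamma$ and $\bx' \in (\Supp \psi')_\Gamma$, note $\bx \cup \bx' \in (\Supp(\psi+\psi'))_\Gamma$ (automatic since the supports are disjoint $\Gamma$-invariant sets), apply the untwisted tensor product property to $W(\psi_\bx + \psi'_{\bx'})$, and restrict to $(\g \otimes A)^\Gamma$. The point you flag as needing care is indeed immediate for exactly the reason you give, and the paper treats it in the same one-line fashion.
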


\begin{proof}
  Choose $\bx \in (\Supp \psi)_\Gamma$ and $\bx' \in (\Supp \psi')_\Gamma$.  Then $\bx \cap \bx' = \varnothing$ and, by Proposition~\ref{prop:Weyl-untwisted-ann-tensor}\eqref{prop-item:Weyl-untwisted:tensor}, we have $W(\psi_\bx + \psi'_{\bx'}) \cong W(\psi_\bx) \otimes W(\psi'_{\bx'})$.  Since $\bx \cup \bx' \in (\Supp (\psi + \psi'))_\Gamma$, the proposition follows after restricting to $(\g \otimes A)^\Gamma$-modules.
\end{proof}

%
\section{Characterization of local Weyl modules by homological properties}
\label{sec:homological-properties}
%

In this section, we show that the local Weyl modules are characterized by homological properties, extending results of \cite{CFK} to the equivariant setting.

For $\lambda \in P$, write $\lambda = \sum_{i \in I} k_i \alpha_i$, $k_i \in \Q$, as a linear combination of simple roots, and define
\[ \ts
  \height \lambda := \sum_{i \in I} k_i.
\]
Recall the usual partial order on $P$ given by
\[
  \lambda \geq \mu \iff \lambda - \mu \in Q^+.
\]
It is clear that
\[
  \lambda > \mu \implies \height \lambda > \height \mu.
\]
Since $\Gamma$ acts on $P^+$ via diagram automorphisms, it preserves the set of positive roots.  Therefore, for $\psi \in \cE^\Gamma$, we have $\sum_{x \in X_\rat} \height \psi_\bx(x) = \sum_{x \in X_\rat} \height \psi_{\bx'}(x)$ for all $\bx,\bx' \in (\Supp \psi)_\Gamma$.

\begin{definition}[Height function on $\cE^\Gamma$]
Define the \emph{height} of $\psi \in \cE^\Gamma$ to be
\[ \ts
  \height \psi = \sum_{x \in X_\rat} \height \psi_\bx(x) \quad \text{for some } \bx \in (\Supp \psi)_\Gamma.
\]
By the above discussion, this definition is independent of the choice of $\bx$.
\end{definition}

For a finite-dimensional $(\g \otimes A)^\Gamma$-module $M$ and $\psi \in \cE^\Gamma$, let $\mult_\psi M$ denote the multiplicity of $\ev^\Gamma_\psi$ in $M$.  In other words, $\mult_\psi M$ is the number of (irreducible) composition factors of $M$ in the isomorphism class $\ev^\Gamma_\psi$.

\begin{definition}[Maximal weight module]
  We call a finite-dimensional $(\g \otimes A)^\Gamma$-module $M$ a \emph{maximal weight module} of \emph{maximal weight} $\psi$ if $\mult_\psi M =1$ and, for all $\varphi \neq \psi$,
  \[
    \mult_\varphi M \neq 0 \implies \height \varphi < \height \psi.
  \]
\end{definition}

\begin{lemma}
  The local Weyl module $W_\Gamma(\psi)$ is a maximal weight module of maximal weight $\psi$.
\end{lemma}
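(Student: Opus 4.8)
The plan is to reduce everything to the untwisted case via the untwisting functor and then apply what is known about $W(\psi_\bx)$. First I would fix $\bx \in (\Supp\psi)_\Gamma$, so that $W_\Gamma(\psi) = \bT_\bx(W(\psi_\bx))$ and, by Lemma~\ref{lem:Weyl-simple-correspondences}, $\bU_\bx(W_\Gamma(\psi)) = W(\psi_\bx)$. Since $\bT_\bx$ and $\bU_\bx$ are mutually inverse isomorphisms of categories (Theorem~\ref{theo:category-equivalence}), they induce bijections on composition factors; more precisely, by Theorem~\ref{theo:category-equivalence}\eqref{theo-item:untwisting-psi}, $\bU_\bx$ sends the isomorphism class $\ev^\Gamma_\varphi$ to $\ev_{\varphi_\bx}$ for each $\varphi \in \cE^\Gamma$ with $\Supp\varphi \subseteq \Gamma\cdot\bx$. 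Hence $\mult_\varphi W_\Gamma(\psi) = \mult_{\varphi_\bx} W(\psi_\bx)$ for all such $\varphi$ (and both sides are $0$ when $\Supp\varphi \not\subseteq \Gamma\cdot\bx$, since then $\ev^\Gamma_\varphi$ cannot occur in $W_\Gamma(\psi) \in \cF^\Gamma_\bx$).

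Next I would record the analogous statement for the untwisted Weyl module $W(\psi_\bx)$, which is surely implicit in \cite{CFK}: as a $\g$-module, $\wt W(\psi_\bx) \subseteq \lambda - Q^+$ where $\lambda = \wt\psi_\bx = \sum_{x\in\bx}\psi(x)$, and the $\lambda$-weight space is one-dimensional and generates $W(\psi_\bx)$. Consequently the only composition factor of $W(\psi_\bx)$ whose highest $\g$-weight equals $\lambda$ is $V(\psi_\bx)$, occurring with multiplicity one, and every other composition factor $\ev_\mu$ (for $\mu \in \cE$, $\Supp\mu \subseteq \bx$) has $\wt\mu = \sum_{x\in\bx}\mu(x)$ strictly less than $\lambda$ in the order on $P$, hence of strictly smaller height. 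This gives: $\mult_{\psi_\bx} W(\psi_\bx) = 1$, and $\mult_\mu W(\psi_\bx) \neq 0$ with $\mu \neq \psi_\bx$ implies $\height \wt\mu < \height \lambda = \height\psi$.

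Finally I would translate the height bound back through $\bU_\bx$. For $\varphi \in \cE^\Gamma$ with $\varphi \neq \psi$ and $\mult_\varphi W_\Gamma(\psi) \neq 0$, the previous paragraph applied to $\mu = \varphi_\bx$ gives $\height \sum_{x\in\bx}\varphi_\bx(x) < \height\psi$; but by the definition of the height function on $\cE^\Gamma$, $\height\varphi = \sum_{x\in X_\rat}\height\varphi_\bx(x) = \height\bigl(\sum_{x\in\bx}\varphi_\bx(x)\bigr)$ (using that height is additive over the disjoint points of $\bx$ and well-defined independently of the choice in $(\Supp\varphi)_\Gamma$), so $\height\varphi < \height\psi$. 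Combined with $\mult_\psi W_\Gamma(\psi) = \mult_{\psi_\bx}W(\psi_\bx) = 1$, this is exactly the assertion that $W_\Gamma(\psi)$ is a maximal weight module of maximal weight $\psi$.

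The main obstacle is purely bookkeeping: carefully matching up composition factors and the two notions of height ($\height$ on $P$ versus $\height$ on $\cE^\Gamma$) across the untwisting functor, and making sure the statement ``$\mu \neq \psi_\bx$ and $\mult_\mu W(\psi_\bx) \neq 0$ $\Rightarrow$ $\wt\mu < \wt\psi_\bx$'' is extracted cleanly from \cite{CFK} (or deduced from the cyclic highest-weight structure of $W(\psi_\bx)$ and the fact that a composition factor $\ev_\mu$ with $\mu = \psi_\bx$ forces, by one-dimensionality of the top weight space, multiplicity exactly one). No serious analytic or algebraic difficulty arises beyond this.
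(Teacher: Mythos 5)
Your proof is correct and follows essentially the same route as the paper's: untwist via $\bU_\bx$, use the fact that the $\g$-weights of $W(\psi_\bx)$ lie in $\wt\psi_\bx - Q^+$ with one-dimensional top weight space, match multiplicities through the category isomorphism, and translate heights back. The only small point worth making explicit is that the support containment of the constituents of $W(\psi_\bx)$ in $\bx$ (equivalently, $W_\Gamma(\psi)\in\cF^\Gamma_\bx$), which you use to justify the multiplicity matching, follows from the annihilator statement in Proposition~\ref{prop:Weyl-untwisted-ann-tensor}\eqref{prop-item:untwisted-Weyl-annihilator}, exactly as the paper cites it.
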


\begin{proof}
  If $\Gamma = \{1\}$, the result follows from the fact that the $\g$-weights of $W(\psi)$ lie in $\wt \psi - Q^+$ by Definition~\ref{def:untwisted-local-Weyl-module}.  Suppose now $\Gamma \ne \{1\}$ and let $\psi \in \mathcal{E}^\Gamma$. Then for any $\bx \in (\Supp \psi)_\Gamma$, we have, by Lemma~\ref{lem:Weyl-simple-correspondences}, $\bU_\bx(W_\Gamma(\psi)) = W(\psi_\bx)$.  By Proposition~\ref{prop:Weyl-untwisted-ann-tensor}\eqref{prop-item:untwisted-Weyl-annihilator}, we have that all constituents of $W(\psi_\bx)$ have support contained in $\bx$.  Thus
  \[
    \mult_\varphi W(\psi_\bx) \ne 0 \implies V(\varphi) \in \cF_\bx.
  \]
  By Theorem~\ref{theo:category-equivalence} and Lemma~\ref{lem:Weyl-simple-correspondences}, we then have
  \[
    \mult_\varphi W_\Gamma(\psi) = \mult_{\varphi_\bx} W(\psi_\bx).
  \]
  Thus, for $\varphi \ne \psi$ (hence $\varphi_\bx \ne \psi_\bx$),
  \begin{align*}
    \mult_\varphi W_\Gamma(\psi) \ne 0 &\implies \mult_{\varphi_\bx} W(\psi_\bx) \ne 0 \\
    &\implies \wt \varphi_\bx < \wt \psi_\bx \\
    &\implies \height \varphi = \height \varphi_\bx < \height \psi_\bx = \height \psi,
  \end{align*}
  where the second implication follows again from the fact that the $\g$-weights of $W(\psi_\bx)$ lie in $\wt \psi_\bx - Q^+$ by Definition~\ref{def:untwisted-local-Weyl-module}.
\end{proof}

Recall that, for $\psi \in \cE$, we have $\wt \psi = \sum_{x \in X_\rat} \psi(x)$.  It is clear that $\wt \psi$ is the maximal $\g$-weight occurring in $V(\psi)$.  We have the following characterization of untwisted local Weyl modules in terms of homological properties.

\begin{proposition}[{\cite[Prop.~8]{CFK}}] \label{prop:untwistedhom}
  Let $M$ be a maximal weight $(\g \otimes A)$-module of maximal weight $\psi$.  Then $M \cong W(\psi)$ if and only if
  \[
    \Hom_\cF(M, V(\varphi)) = 0 \quad \text{and} \quad \Ext^1_\cF(M, V(\varphi)) = 0
  \]
  for all $\varphi \in \cE$ with $\wt(V(\varphi)) \subseteq (\wt \psi - Q^+)\setminus\{\wt \psi\}$.
\end{proposition}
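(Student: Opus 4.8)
The plan is to prove the two implications separately.

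The forward implication ($M\cong W(\psi)$ implies the vanishing) is essentially formal. A nonzero map $W(\psi)\to V(\varphi)$ is surjective since $V(\varphi)$ is irreducible, so it exhibits $V(\varphi)$ as an irreducible quotient of $W(\psi)$; but the unique such quotient is $V(\psi)$ (Proposition~\ref{prop:Weyl-untwisted-ann-tensor}), and $V(\psi)\not\cong V(\varphi)$ because $\wt\psi$ is a $\g$-weight of $V(\psi)$ but, by hypothesis, not of $V(\varphi)$; hence $\Hom_\cF(W(\psi),V(\varphi))=0$. For $\Ext^1$, consider $0\to V(\varphi)\to E\to W(\psi)\to 0$ in $\cF$. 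Since the $\g$-weights of $W(\psi)$ lie in $\wt\psi-Q^+$ (Definition~\ref{def:untwisted-local-Weyl-module}) and those of $V(\varphi)$ in $(\wt\psi-Q^+)\setminus\{\wt\psi\}$, we get $E_\mu=0$ for all $\mu>\wt\psi$ and $\dim E_{\wt\psi}=\dim W(\psi)_{\wt\psi}=1$. Lift the generator $w_\psi$ to $\tilde w\in E_{\wt\psi}$ and check it satisfies the defining relations of $W(\psi)$: $(\mathfrak{n}^+\otimes A)\tilde w=0$ because $\mathfrak{n}^+\otimes A$ maps $E_{\wt\psi}$ into $\bigoplus_{\alpha>0}E_{\wt\psi+\alpha}=0$; $\mathfrak{h}\otimes A$ preserves the line $E_{\wt\psi}=k\tilde w$, and the character it acts by must be the one attached to $\psi$ (project to $W(\psi)$); and $(f_i\otimes1)^{\lambda(h_i)+1}\tilde w=0$ since $\tilde w$ generates an integrable highest weight $\g$-submodule of the finite-dimensional $\g$-module $E$. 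By the universal property of $W(\psi)$ this gives a section $W(\psi)\to E$ of $E\twoheadrightarrow W(\psi)$, so the sequence splits.

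For the reverse implication, let $M$ be a maximal weight module of maximal weight $\psi$ satisfying both vanishing conditions. Every composition factor $V(\varphi)$ of $M$ with $\varphi\ne\psi$ has $\height\varphi<\height\psi$, so no $\g$-weight of $V(\varphi)$ can be $\ge\wt\psi$; consequently $M_\mu=0$ for $\mu>\wt\psi$ and $\dim M_{\wt\psi}=\mult_\psi M=1$. A nonzero $v_0\in M_{\wt\psi}$ is annihilated by $\mathfrak{n}^+\otimes A$ and spans an $\mathfrak{h}\otimes A$-eigenline, so by Proposition~\ref{prop:Weyl-untwisted-quotient}\eqref{prop-item:Weyl-untwisted:quotient} the submodule $M_0:=U(\g\otimes A)v_0$ is a quotient of $W(\varphi_0)$ for some $\varphi_0\in\cE$ with $\wt\varphi_0=\wt\psi$; its irreducible quotient $V(\varphi_0)$ is a composition factor of $M$ of height $\height\psi$, so the multiplicity-one/height hypothesis forces $\varphi_0=\psi$. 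Assuming $M_0=M$, we obtain a surjection $q\colon W(\psi)\twoheadrightarrow M$ with kernel $K$. Since $\mult_\psi W(\psi)=\mult_\psi M=1$ we have $\mult_\psi K=0$, so $K$ lies in the unique maximal submodule of $W(\psi)$ and every irreducible quotient $V(\varphi)$ of $K$ has $\varphi\ne\psi$, hence $\wt V(\varphi)\subseteq(\wt\psi-Q^+)\setminus\{\wt\psi\}$. Pushing $0\to K\to W(\psi)\to M\to 0$ out along a surjection $K\twoheadrightarrow V(\varphi)$ yields an extension of $M$ by $V(\varphi)$, which splits by hypothesis; a splitting produces a nonzero map $W(\psi)\to V(\varphi)$, contradicting the $\Hom$-vanishing for $W(\psi)$ proved above. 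Hence $K=0$ and $M\cong W(\psi)$.

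The main obstacle is the step left open above: that $M$ is generated by $M_{\wt\psi}$, equivalently that $M$ has no composition factor $V(\varphi)$ with $\wt\varphi\not\le\wt\psi$. The maximal weight hypothesis controls only the height of the labels, not the $\g$-weight order, so a priori $M$ might carry a highest weight vector of weight incomparable to $\wt\psi$ but of strictly smaller height; such a factor is not among the $V(\varphi)$ for which vanishing is assumed, and so is not excluded by the $\Hom$-argument alone. This is where the two hypotheses must be used together and where the argument of \cite{CFK} does real work: the natural way to organize it is to pass to the subcategory of $\cF$ whose objects have all composition factors with $\g$-weights in $\wt\psi-Q^+$ — a highest weight category with enough projectives, standard objects the $W(\varphi)$, in which $W(\psi)$ is characterized homologically in the usual way via projective covers and BGG reciprocity — and then to show that the vanishing hypotheses already force $M$ into this subcategory. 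Establishing that last point (that $\Ext^1$-vanishing against the lower simples rules out rogue composition factors) needs input beyond the formal steps above, e.g.\ non-vanishing of suitable extensions among evaluation modules; with it in hand, the proposition follows. (The height function was introduced precisely so that, together with the remarks preceding the statement, this characterization survives the passage through the twisting and untwisting functors of Theorem~\ref{theo:category-equivalence}.)
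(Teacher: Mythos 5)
The paper itself gives no proof of this proposition: it is imported verbatim from \cite[Prop.~8]{CFK}, so the only thing to assess is your argument. Your forward direction is correct and standard (the weight bounds force a lift of $w_\psi$ to satisfy the defining relations of $W(\psi)$, whence a splitting), and your reduction of the converse to the single claim that $M$ is generated by $M_{\wt\psi}$ is also correct.

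The gap you flag at that point is genuine, but it is worse than you suggest: it cannot be closed by any Ext non-vanishing or highest-weight-category input, because with the paper's height-based notion of ``maximal weight module'' the converse is false as literally stated. Take $\g=\mathfrak{sl}_2$, $A=k[t]$, $\psi$ the function with single value $2\omega$ at a point ($\omega$ the fundamental weight, so $\wt\psi=\alpha$, $\height\psi=1$), and $M=W(\psi)\oplus \ev_x V(\omega)$ for any rational point $x$. Then $M$ is a maximal weight module of maximal weight $\psi$ (its other constituents are the trivial module and $\ev_x V(\omega)$, of heights $0$ and $\tfrac12$), and the only $\varphi$ with $\wt(V(\varphi))\subseteq(\wt\psi-Q^+)\setminus\{\wt\psi\}$ is $\varphi=0$; one checks $\Hom_\cF(M,V(0))=0$ and $\Ext^1_\cF(M,V(0))=0$, since $\Ext^1_\cF(W(\psi),V(0))=0$ by your forward argument and $\Ext^1_\cF(\ev_x V(\omega),V(0))\cong H^1(\g\otimes A,\ev_x V(\omega))=0$ (a cocycle normalized to vanish on $\g\otimes 1$ gives, for each $f\in A$, a $\g$-module map $\g\to V(\omega)$, and $\Hom_\g(\g,V(\omega))=0$). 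Yet $M\not\cong W(\psi)$. So the hypotheses as transplanted into this paper do not suffice; the statement must be read with the hypothesis actually present in \cite{CFK}, namely that all $\g$-weights of $M$ lie in $\wt\psi-Q^+$ (equivalently, every composition factor $V(\varphi)$ of $M$ has $\wt\varphi\le\wt\psi$). Under that hypothesis your own argument closes immediately: an irreducible quotient of $M/U(\g\otimes A)\cdot M_{\wt\psi}$ then has highest weight strictly below $\wt\psi$, contradicting the assumed $\Hom$-vanishing, so $M$ is generated by $M_{\wt\psi}$ and your kernel argument finishes the proof. Note that Theorem~\ref{theo:twisted-Weyl-module-hom-property} is unaffected by this issue, since its vanishing hypothesis ranges over all $\varphi$ of smaller height and hence does exclude such incomparable constituents --- which is precisely how its proof uses it.
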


We want to reformulate this theorem and generalize it to the case of equivariant map algebras.

\begin{theorem} \label{theo:twisted-Weyl-module-hom-property}
  Let $M$ be a maximal weight $(\g \otimes A)^\Gamma$-module of maximal weight $\psi$.  Then $M \cong W_\Gamma(\psi)$ if and only if
  \begin{equation} \label{eq:twisted-Weyl-module-hom-property}
    \Hom_{\cF^\Gamma}(M, V_\Gamma(\varphi)) = 0 \text{ and } \Ext^1_{\cF^\Gamma}(M, V_\Gamma(\varphi)) = 0 \ \forall\ \varphi \in \cE^\Gamma \text{ with } \height(\varphi) < \height(\psi).
  \end{equation}
\end{theorem}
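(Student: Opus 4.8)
The strategy is to transport the statement along the untwisting functor $\bU_\bx$ and invoke the untwisted characterization (Proposition~\ref{prop:untwistedhom}). First I would fix $\psi \in \cE^\Gamma$ and choose $\bx \in (\Supp\psi)_\Gamma$. By Proposition~\ref{prop:fd-annihilator}, any module $M$ appearing in the statement lies in $\cF^\Gamma_\by$ for some $\by \in X_*$; enlarging $\by$ if necessary, we may assume $\bx \subseteq \by$ and work inside $\cF^\Gamma_\by$, so that $\bU_\by$ is available and is an isomorphism of categories onto $\cF_\by$ by Theorem~\ref{theo:category-equivalence}\eqref{theo-item:category-equivalence}. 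Since isomorphisms of abelian categories preserve $\Hom$ and $\Ext^1$, we have $\Hom_{\cF^\Gamma}(M, V_\Gamma(\varphi)) \cong \Hom_{\cF}(\bU_\by M, \bU_\by V_\Gamma(\varphi))$ and likewise for $\Ext^1$, whenever $V_\Gamma(\varphi) \in \cF^\Gamma_\by$ (i.e.\ $\Supp\varphi \subseteq \Gamma\cdot\by$); and by Theorem~\ref{theo:category-equivalence}\eqref{theo-item:untwisting-psi} (or Lemma~\ref{lem:Weyl-simple-correspondences}) we have $\bU_\by V_\Gamma(\varphi) = V(\varphi_\by)$ and $\bU_\by W_\Gamma(\psi) = W(\psi_\by)$.

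\textbf{Key steps.} (1) Show $M$ is a maximal weight $(\g\otimes A)^\Gamma$-module of maximal weight $\psi$ if and only if $\bU_\by M$ is a maximal weight $(\g\otimes A)$-module of maximal weight $\psi_\by$. For this I would combine the multiplicity identity $\mult_\varphi M = \mult_{\varphi_\by}(\bU_\by M)$ (immediate from Theorem~\ref{theo:category-equivalence}, since $\bU_\by$ is exact and sends the isomorphism class $\ev^\Gamma_\varphi$ to $\ev_{\varphi_\by}$) with the fact that $\height\varphi = \sum_x \height \varphi_\by(x) = \height(\wt \varphi_\by)$, so that $\height\varphi < \height\psi \iff \height(\wt\varphi_\by) < \height(\wt\psi_\by)$. (2) Translate the weight condition: one checks that the set of $\varphi \in \cE^\Gamma$ with $\height(\varphi) < \height(\psi)$ and $V_\Gamma(\varphi) \in \cF^\Gamma_\by$ corresponds under $\varphi \mapsto \varphi_\by$ to a set of $\phi \in \cE$ that, together with the maximal-weight hypothesis on $\bU_\by M$, suffices to run the ``only if'' direction of Proposition~\ref{prop:untwistedhom}; conversely the hypotheses of Proposition~\ref{prop:untwistedhom} force vanishing against exactly those $V(\phi)$ with $\wt(V(\phi)) \subseteq (\wt\psi_\by - Q^+)\setminus\{\wt\psi_\by\}$, which is implied by $\height(\wt\phi) < \height(\wt\psi_\by)$ together with the maximal-weight condition (the support must be contained in $\by$ because all constituents of $W(\psi_\by)$ have support in $\bx \subseteq \by$ by Proposition~\ref{prop:Weyl-untwisted-ann-tensor}\eqref{prop-item:untwisted-Weyl-annihilator}). (3) Conclude: $M \cong W_\Gamma(\psi)$ iff $\bU_\by M \cong W(\psi_\by)$ (as $\bU_\by$ is an isomorphism of categories) iff the homological vanishing \eqref{eq:twisted-Weyl-module-hom-property} holds, via Proposition~\ref{prop:untwistedhom} applied to $\bU_\by M$.

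\textbf{Main obstacle.} The delicate point is matching the index sets of the quantifiers on the two sides. The untwisted statement quantifies over $\varphi$ with the $\g$-weight condition $\wt(V(\varphi)) \subseteq (\wt\psi - Q^+)\setminus\{\wt\psi\}$, whereas the twisted statement quantifies over the (a priori larger) set $\height(\varphi) < \height(\psi)$ with no support restriction. Reconciling these requires two observations: that for the ``if'' direction one only needs vanishing against the irreducibles that can actually occur as subquotients of $M$ or of an extension by $M$ — and maximality of weight plus the definition of $\height$ confines these to exactly the relevant range — while for the ``only if'' direction one uses that $\height\varphi < \height\psi$ combined with $\mult_\varphi M \neq 0$ or $\varphi$ arising in an extension forces $\wt\varphi_\by < \wt\psi_\by$ in $Q^+$ (not merely a height inequality), because $M$ lives in $\cF^\Gamma_\by$ and its constituents have support in $\by$. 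I expect step (2), specifically the careful bookkeeping showing the two quantifier ranges produce equivalent conditions in the presence of the maximal weight hypothesis, to be where the real work lies; everything else is formal transport along the equivalence $\bU_\by$.
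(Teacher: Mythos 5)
Your overall strategy (transport along the untwisting functor and quote the untwisted characterization) is the same as the paper's, but both directions of your reduction contain a genuine gap, and in each case the missing piece is exactly where the paper does its real work. For the ``only if'' direction, your claim that ``$\varphi$ arising in an extension forces $\wt \varphi_\by < \wt \psi_\by$ in $Q^+$'' is false: an extension $0 \to V_\Gamma(\varphi) \to E \to W_\Gamma(\psi) \to 0$ can be written down for any $\varphi$ with $\height \varphi < \height \psi$, and nothing about $E$ makes $\wt \varphi_\by$ comparable to $\wt \psi_\by$ in the root order, so Proposition~\ref{prop:untwistedhom} (whose quantifier is the weight condition $\wt(V(\phi)) \subseteq (\wt\psi_\by - Q^+)\setminus\{\wt\psi_\by\}$) does not directly give the required $\Ext^1$-vanishing for the strictly larger height-indexed family. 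One must first prove the untwisted ``height version'': since $\height\varphi < \height\psi$ forces $V(\varphi_\by)_{\lambda} = 0$ for $\lambda = \wt\psi_\by$, the $\lambda$-weight space of the (untwisted) extension is one-dimensional and annihilated by $\mathfrak{n}^+ \otimes A$; the submodule generated by it surjects onto $W(\psi_\by)$ with kernel $U$ whose weights lie in $(\lambda - Q^+)\setminus\{\lambda\}$, and Proposition~\ref{prop:untwistedhom}, applied along a composition series of $U$, splits this auxiliary sequence and hence the original one. This strengthening of the CFK statement is the paper's first step and is absent from your plan.

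For the ``if'' direction, ``apply Proposition~\ref{prop:untwistedhom} to $\bU_\by M$'' requires its hypothesis against \emph{all} $V(\phi)$, $\phi \in \cE$, in the weight range --- including $\phi$ whose support is not contained in $\by$ (or meets a $\Gamma$-orbit in several points); such $\phi$ do not correspond under $\bU_\by$ to any $V_\Gamma(\varphi)$, so the transported hypothesis \eqref{eq:twisted-Weyl-module-hom-property} says nothing about them. The $\Hom$-vanishing for these $\phi$ is easy (a nonzero map would make $V(\phi)$ a constituent of $\bU_\by M \in \cF_\by$), but the $\Ext^1$-vanishing is not: it holds for $W(\psi_\by)$ by CFK, but for an arbitrary maximal weight module $M$ it is part of what has to be proved, so assuming it is circular, and your remark that one ``only needs vanishing against the irreducibles that can actually occur'' is not a substitute for verifying the hypothesis of Proposition~\ref{prop:untwistedhom}. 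The paper avoids this entirely by arguing directly inside $\cF_\bx$ and $\cF_\bx^\Gamma$: the $\Hom$-vanishing shows $\bU_\bx M$ is generated by its $\wt\psi_\bx$-weight vector, Proposition~\ref{prop:Weyl-untwisted-quotient}\eqref{prop-item:Weyl-untwisted:quotient} then exhibits it as a quotient of $W(\psi_\bx)$ with kernel $U$ whose constituents are supported in $\bx$ and have smaller height, and twisting back, the long exact sequence together with \eqref{eq:twisted-Weyl-module-hom-property} forces $\bT_\bx U = 0$. You would need to replace your formal appeal to Proposition~\ref{prop:untwistedhom} by such an argument (or by an additional lemma controlling extensions against irreducibles supported off $\by$) for the proof to close.
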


\begin{proof}
  We first prove the theorem in the case $\Gamma = \{1\}$, where it is a slightly modified version of Proposition~\ref{prop:untwistedhom}.  In this case $(\g \otimes A)^\Gamma = \g \otimes A$ and $W_\Gamma (\psi) = W(\psi)$.  We first want to show that $W(\psi)$ satisfies
  \[
    \Hom_{\cF}(W(\psi), V(\varphi)) = 0 \quad \text{and} \quad \Ext^1_{\cF}(W(\psi), V(\varphi)) = 0
  \]
  for all $\varphi \in \cE$ with $\height(\varphi) < \height(\psi)$. Since the group $\Gamma$ is trivial, all finite-dimensional $(\g \otimes A)$-modules are also $\g$-modules via the identification of $\g$ with $\g \otimes k \subseteq \g \otimes A$.  Thus we have weight space decompositions as $\g$-modules.

  Let $\lambda = \wt \psi$.  Since $\height \varphi < \height \psi $, we have $\lambda \notin \wt(\varphi) - Q^+$ and so $V(\varphi)_\lambda = 0$. Since $W(\psi)$ is generated by $W(\psi)_\lambda$, this implies $\Hom_{\cF}(W(\psi), V(\varphi)) = 0$.

  Now suppose we have an extension of $(\g \otimes A$)-modules
  \begin{equation} \label{eq:proof-seq-splitting-1}
    0 \to V(\varphi) \to E \to W(\psi) \to 0.
  \end{equation}
  Let $w_\lambda$ be the preimage in $E$ of a maximal weight vector of $W(\psi)$.  Since $\lambda \notin \wt(\varphi) - Q^+$, we have $\dim E_\lambda = 1$, and so $w_\lambda$ is unique up to nonzero scalar multiple.  Also, $(\mathfrak{n}^+ \otimes A) \cdot w_\lambda = 0$ and so we have an exact sequence
  \begin{equation} \label{eq:proof-seq-splitting-2}
    0 \longrightarrow U \longrightarrow U(\g \otimes A) \cdot w_\lambda \longrightarrow W(\psi) \longrightarrow 0
  \end{equation}
  where $U$ is a $\g \otimes A$-module with $U_\lambda = 0$.  Since $\wt(U(\g \otimes A) \cdot w_\lambda) \subseteq \lambda - Q^+$, we have $\wt(U) \subseteq (\lambda - Q^+) \setminus \{\lambda\}$.  Thus Proposition~\ref{prop:untwistedhom} implies that~\eqref{eq:proof-seq-splitting-2} splits, which in turn implies that~\eqref{eq:proof-seq-splitting-1} splits.  Thus $E$ is the trivial extension.  Therefore $\Ext^1_{\cF}(W(\psi), V(\varphi)) = 0$.

  On the other hand, suppose $M$ satisfies~\eqref{eq:twisted-Weyl-module-hom-property}.  We claim that $M$ also satisfies the properties characterizing $W(\psi)$ as given in Proposition~\ref{prop:untwistedhom}.  Let $\varphi \in \cE$ with $\wt(V(\varphi)) \subseteq (\lambda - Q^+) \setminus \{\lambda\}$.  Then $\wt \varphi < \lambda$, hence $\height(\varphi) < \height(\psi)$.  The claim then follows from \eqref{eq:twisted-Weyl-module-hom-property}.   Hence the theorem is true for $\Gamma = \{1\}$.

  Now consider the case of arbitrary $\Gamma$.  Let $\varphi \in \cE^\Gamma$ with $\height(\varphi) < \height(\psi)$. We would first like to show that
  \[
    \Hom_{\cF^\Gamma}(W_\Gamma(\psi), V_\Gamma(\varphi)) = 0 \quad \text{and} \quad \Ext^1_{\cF^\Gamma}(W_\Gamma(\psi), V_\Gamma(\varphi)) = 0.
  \]
  Let $\tau \in \Hom_{\cF^\Gamma}(W_\Gamma(\psi), V_\Gamma(\varphi))$ be nonzero.  Then $\tau$ is surjective since $V_\Gamma(\varphi)$ is irreducible, and so $V_\Gamma(\varphi)$ is isomorphic to a quotient of $W_\Gamma(\psi)$. By Proposition~\ref{prop:fd-annihilator} there exists $\eta$, $\Supp \eta \subseteq X_*$, such that $W_\Gamma(\psi)$ (hence also $V_\Gamma(\varphi)$) is annihilated by $(\g \otimes I_\eta)^\Gamma$. Let $\bx = \Supp \eta$.  Then
  \[
    \Hom_{\cF^\Gamma}(W_\Gamma(\psi), V_\Gamma(\varphi)) \cong \Hom_{\cF^\Gamma_\bx}(W_\Gamma(\psi), V_\Gamma(\varphi)).
  \]
  Now, by Theorem~\ref{theo:category-equivalence} and Lemma~\ref{lem:Weyl-simple-correspondences}, we have
    \[
      \Hom_{\cF^\Gamma_\bx}(W_\Gamma(\psi), V_\Gamma(\varphi)) \cong \Hom_{\cF_\bx}(W(\psi_\bx), V(\varphi_\bx)).
    \]
  Since $\height \varphi_\bx = \height \varphi < \height \psi = \height \psi_\bx$, we conclude $\Hom_{\cF_\bx}(W(\psi_\bx), V(\varphi_\bx)) = 0$ since we know the theorem is true in the untwisted case.  Thus $\tau = 0$ and so $\Hom_{\cF^\Gamma}(W_\Gamma(\psi), V_\Gamma(\varphi)) = 0$.

  Now let
  \begin{equation} \label{eq:proof-splitting-3}
    0 \to V_\Gamma(\varphi) \to E \to W_\Gamma(\psi) \to 0
  \end{equation}
  be an extension of $(\g \otimes A)^\Gamma$-modules with $\height \varphi < \height\psi$. Since $E$ is finite-dimensional, by Proposition~\ref{prop:fd-annihilator} there exists $\eta$, $\Supp \eta \subseteq X_*$, such that $(\g \otimes I_\eta)^\Gamma \cdot E = 0$.  But this implies
  \[
    (\g \otimes I_\eta)^\Gamma \cdot W_\Gamma(\psi) = 0 \quad \text{and} \quad (\g \otimes I_\eta)^\Gamma \cdot V_\Gamma(\varphi) = 0.
  \]
  Thus~\eqref{eq:proof-splitting-3} is an exact sequence in $\cF^\Gamma_\bx$ for $\bx = \Supp \eta$ and hence, by Theorem~\ref{theo:category-equivalence} and Lemma~\ref{lem:Weyl-simple-correspondences},
  \begin{equation} \label{eq:proof-splitting-4}
    0 \to V(\varphi_\bx) \to \bU_\bx E \to W(\psi_\bx) \to 0
  \end{equation}
  is a short exact sequence in $\cF_\bx$.  Since $\height \varphi_\bx = \height \varphi < \height \psi = \height \psi_\bx$, \eqref{eq:proof-splitting-4} splits by the fact that the theorem is true in the untwisted case.  Then Theorem~\ref{theo:category-equivalence} implies that~\eqref{eq:proof-splitting-3} splits.  So $\Ext^1_{\cF^\Gamma}(W_\Gamma(\psi), V_\Gamma(\varphi)) = 0$.

  On the other hand, suppose $M$ satisfies \eqref{eq:twisted-Weyl-module-hom-property}.  We would like to show that $M \cong W_\Gamma(\psi)$.  Fix $\bx \in (\Supp M)_\Gamma$.  Then $M \in \cF_\bx^\Gamma$ and so $\bU_\bx M$ is a module in $\cF_\bx$.  By Theorem~\ref{theo:category-equivalence} and Lemma~\ref{lem:Weyl-simple-correspondences}, it suffices to show that $\bU_\bx M \cong W(\psi_\bx)$. Since $M$ is a maximal weight module of maximal weight $\psi$, we have $\Supp \psi \subseteq \Supp M$, hence $\bx \cap (\Supp \psi) \in (\Supp \psi)_\Gamma$ and $\bU_\bx M$ is a maximal weight module of maximal weight $\psi_\bx$.  In particular, this implies that the $\g$-weight space of $\bU_\bx M$ of weight $\wt \psi_\bx$ is one-dimensional.

  Let $m_\psi$ be a nonzero element of $(\bU_\bx M)_{\wt{\psi_\bx}}$.  We claim that $\bU_\bx M$ is cyclic and generated by $m_\psi$.
  Indeed, if this were not the case, then the submodule generated by $v$, where $v$ is in a $\g$-complement of $U(\g \otimes A) \cdot m_\psi$ would have an irreducible quotient $V(\varphi)$, with $\height \varphi < \height \psi$ and $\Supp \varphi \subseteq \bx$. Then $\bT_\bx (V(\varphi)) = V_\Gamma(\varphi^\Gamma)$ would be an irreducible object of $\mathcal{F}_\bx^\Gamma$.  Again by Theorem~\ref{theo:category-equivalence}, we would have
  \[
    \Hom_{\cF_\bx}(\bU_\bx M, V(\varphi)) \neq 0 \implies \Hom_{\cF_\bx^\Gamma}(M, V_\Gamma(\varphi^\Gamma)) \neq 0,
  \]
  which contradicts~\eqref{eq:twisted-Weyl-module-hom-property} since $\height \varphi^\Gamma = \height \varphi < \height \psi$.  By Proposition~\ref{prop:Weyl-untwisted-quotient}\eqref{prop-item:Weyl-untwisted:quotient}, $\bU_\bx M$ is a quotient of $W(\psi_\bx)$.  It remains to show that it is not a proper quotient.   We have $(\bU_\bx M)_\mu = 0 $ for all $\mu > \lambda$, so $(\mathfrak{n}^+ \otimes A) \cdot m_\psi  = 0$, which implies we have an exact sequence
  \[
    0 \to U \to W(\psi_\bx) \to \bU_\bx M \to 0
  \]
  with $U$ an object of $\cF_\bx$ satisfying $U_\lambda = 0$ and $\wt(U) \subseteq \lambda - Q^+$. Applying $\bT_\bx$, we have
  \begin{equation} \label{eq:proof-ext}
    0 \to \bT_\bx U \to W_\Gamma(\psi) \to M\to 0.
  \end{equation}
  Now applying $\Hom_{\cF_\bx^\Gamma} ( -, V_\Gamma(\varphi))$, for $\varphi \in \cE^\Gamma$ with $\Supp \varphi \subseteq \bx$, to the short exact sequence \eqref{eq:proof-ext}, we obtain the long exact sequence
  \begin{multline*}
    0\to \Hom_{\cF_\bx^\Gamma}(M,V_\Gamma(\varphi)) \to \Hom_{\cF_\bx^\Gamma}(W_\Gamma(\psi),V_\Gamma(\varphi)) \to \Hom_{\cF_\bx^\Gamma}(\bT_\bx U,V_\Gamma(\varphi)) \\
    \to \Ext^1_{\cF_\bx^\Gamma}(M,V_\Gamma(\varphi)) \to \cdots
  \end{multline*}
  By \eqref{eq:twisted-Weyl-module-hom-property}, we have
  \begin{gather*}
    \Hom_{\cF_\bx^\Gamma}(W_\Gamma(\psi), V_\Gamma(\varphi)) = \Hom_{\cF^\Gamma}(W_\Gamma(\psi), V_\Gamma(\varphi)) = 0 \quad \text{and} \\
    \Ext^1_{\cF_\bx^\Gamma}(M,V_\Gamma(\varphi)) = \Ext^1_{\cF^\Gamma}(M,V_\Gamma(\varphi)) = 0
  \end{gather*}
  when $\height \varphi <\height \psi$.  Thus $\Hom_{\cF_\bx^\Gamma}(\bT_\bx U,V_\Gamma(\varphi))=0$, whenever $\height \varphi < \height \psi$.  Since all irreducible subquotients $V_\Gamma(\varphi)$ of $\bT_\bx U$ satisfy $\Supp \varphi \subseteq \bx$ and  $\height \varphi < \psi$, we have $\bT_\bx U = 0$ and hence $U=0$. Thus the theorem follows.
\end{proof}

The following corollary is a twisted version of Proposition~\ref{prop:Weyl-untwisted-quotient}\eqref{prop-item:Weyl-untwisted:quotient}.  Condition~\eqref{eq:twisted-quotient} below should be thought of as a twisted analogue of the condition in Proposition~\ref{prop:Weyl-untwisted-quotient}\eqref{prop-item:Weyl-untwisted:quotient} that $M$ is cyclicly generated by the vector $v$.

\begin{corollary}
  Let $M$ be a maximal weight $(\g \otimes A)^\Gamma$-module of maximal weight $\psi \in \cE^\Gamma$ such that
  \begin{equation} \label{eq:twisted-quotient}
    \Hom_{\cF^\Gamma}(M, V(\varphi)) = 0
  \end{equation}
  for all $\varphi \in \cE^\Gamma$ with $\height \varphi < \height \psi$.  Then $M$ is a quotient of $W_\Gamma(\psi)$.
\end{corollary}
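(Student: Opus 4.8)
The plan is to reduce to the untwisted case via the untwisting functor $\bU_\bx$ and then apply Proposition~\ref{prop:Weyl-untwisted-quotient}\eqref{prop-item:Weyl-untwisted:quotient}, exactly as in the ``if'' direction of the proof of Theorem~\ref{theo:twisted-Weyl-module-hom-property}. First I would fix $\bx \in (\Supp M)_\Gamma$, so that $M \in \cF_\bx^\Gamma$ and $\bU_\bx M \in \cF_\bx$; since $M$ has maximal weight $\psi$, we have $\Supp \psi \subseteq \Supp M$, hence $\bx \cap \Supp\psi \in (\Supp\psi)_\Gamma$ and $\bU_\bx M$ is a maximal weight $(\g \otimes A)$-module of maximal weight $\psi_\bx$. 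In particular the $\g$-weight space of $\bU_\bx M$ of weight $\lambda := \wt \psi_\bx$ is one-dimensional; pick a nonzero vector $m_\psi$ in it.

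The key step is to show $\bU_\bx M$ is cyclic, generated by $m_\psi$. This is precisely the argument already carried out in the last part of the proof of Theorem~\ref{theo:twisted-Weyl-module-hom-property}: if $\bU_\bx M$ were not generated by $m_\psi$, take a nonzero vector $v$ in a $\g$-complement of $U(\g \otimes A)\cdot m_\psi$; the submodule it generates has an irreducible quotient $V(\varphi)$ with $\Supp\varphi \subseteq \bx$ and (since its highest $\g$-weight is $< \lambda$, as $\wt(\bU_\bx M) \subseteq \lambda - Q^+$ and the $\lambda$-weight space is exhausted by $m_\psi$) $\wt\varphi < \lambda$, hence $\height \varphi < \height \psi$. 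Applying $\bT_\bx$ and Theorem~\ref{theo:category-equivalence}, $V_\Gamma(\varphi^\Gamma)$ is an irreducible object of $\cF_\bx^\Gamma$ and the nonzero map $\bU_\bx M \twoheadrightarrow V(\varphi)$ yields a nonzero element of $\Hom_{\cF_\bx^\Gamma}(M, V_\Gamma(\varphi^\Gamma)) = \Hom_{\cF^\Gamma}(M, V_\Gamma(\varphi^\Gamma))$ with $\height\varphi^\Gamma = \height\varphi < \height\psi$, contradicting~\eqref{eq:twisted-quotient}. Hence $\bU_\bx M$ is cyclic on $m_\psi$, and since $\wt(\bU_\bx M) \subseteq \lambda - Q^+$ we also get $(\mathfrak{n}^+ \otimes A)\cdot m_\psi = 0$. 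By Proposition~\ref{prop:Weyl-untwisted-quotient}\eqref{prop-item:Weyl-untwisted:quotient}, $\bU_\bx M$ is a quotient of $W(\psi'_\bx)$ for some $\psi' \in \cE$; comparing highest $\g$-weights and supports forces (the relevant part of) $\psi'$ to be $\psi_\bx$, so $\bU_\bx M$ is a quotient of $W(\psi_\bx)$.

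Finally I would apply the twisting functor: by Theorem~\ref{theo:category-equivalence}, $\bT_\bx$ is an isomorphism of categories $\cF_\bx \to \cF_\bx^\Gamma$, so it sends the surjection $W(\psi_\bx) \twoheadrightarrow \bU_\bx M$ to a surjection $W_\Gamma(\psi) = \bT_\bx W(\psi_\bx) \twoheadrightarrow \bT_\bx \bU_\bx M = M$ (using Lemma~\ref{lem:Weyl-simple-correspondences} and Theorem~\ref{theo:category-equivalence}\eqref{theo-item:category-equivalence}), which is the desired conclusion. The main obstacle is the cyclicity step, but as noted it is essentially verbatim from the proof of Theorem~\ref{theo:twisted-Weyl-module-hom-property}; a minor point requiring care is matching $\psi'$ with $\psi_\bx$ when invoking Proposition~\ref{prop:Weyl-untwisted-quotient}\eqref{prop-item:Weyl-untwisted:quotient}, which follows since the maximal-weight and support data of $\bU_\bx M$ pin down $\psi'$ on $\bx$ and $\bU_\bx M \in \cF_\bx$ has support contained in $\bx$. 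Alternatively, one can bypass Proposition~\ref{prop:Weyl-untwisted-quotient}\eqref{prop-item:Weyl-untwisted:quotient} and argue directly that a cyclic highest-weight-type vector $m_\psi$ satisfying the defining relations of $W(\psi_\bx)$ produces the quotient map, using that the relations $(f_i \otimes 1)^{\lambda(h_i)+1} m_\psi = 0$ hold automatically because $\bU_\bx M$ is a finite-dimensional $\g$-module.
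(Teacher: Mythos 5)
Your proposal is correct and is essentially the paper's own argument: the paper proves this corollary by pointing to the proof of Theorem~\ref{theo:twisted-Weyl-module-hom-property}, whose final portion (fixing $\bx$, establishing cyclicity of $\bU_\bx M$ on $m_\psi$ using only the Hom-vanishing hypothesis, invoking Proposition~\ref{prop:Weyl-untwisted-quotient}\eqref{prop-item:Weyl-untwisted:quotient}, and twisting back with $\bT_\bx$) is exactly what you spelled out. Your extra care in identifying the $\psi'$ from Proposition~\ref{prop:Weyl-untwisted-quotient}\eqref{prop-item:Weyl-untwisted:quotient} with $\psi_\bx$ is a reasonable elaboration of a detail the paper leaves implicit, not a different route.
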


\begin{proof}
  This follows from the proof of Theorem~\ref{theo:twisted-Weyl-module-hom-property}.
\end{proof}


\bibliographystyle{alpha}
\bibliography{twm-biblist}

\begin{thebibliography}{CFK10}

\bibitem[BN04]{BN04}
Jonathan Beck and Hiraku Nakajima.
\newblock Crystal bases and two-sided cells of quantum affine algebras.
\newblock {\em Duke Math. J.}, 123(2):335--402, 2004.

\bibitem[Bou75]{Bou75}
N.~Bourbaki.
\newblock {\em \'{E}l\'ements de math\'ematique}.
\newblock Hermann, Paris, 1975.
\newblock Fasc. XXXVIII: Groupes et alg{\`e}bres de Lie. Chapitre VII:
  Sous-alg{\`e}bres de Cartan, {\'e}l{\'e}ments r{\'e}guliers. Chapitre VIII:
  Alg{\`e}bres de Lie semi-simples d{\'e}ploy{\'e}es, Actualit{\'e}s
  Scientifiques et Industrielles, No. 1364.

\bibitem[CFK10]{CFK}
Vyjayanthi Chari, Ghislain Fourier, and Tanusree Khandai.
\newblock A categorical approach to {W}eyl modules.
\newblock {\em Transform. Groups}, 15(3):517--549, 2010.

\bibitem[CFS08]{CFS08}
Vyjayanthi Chari, Ghislain Fourier, and Prasad Senesi.
\newblock Weyl modules for the twisted loop algebras.
\newblock {\em J. Algebra}, 319(12):5016--5038, 2008.

\bibitem[Cha86]{C85}
Vyjayanthi Chari.
\newblock Integrable representations of affine {L}ie-algebras.
\newblock {\em Invent. Math.}, 85(2):317--335, 1986.

\bibitem[CL06]{CL06}
Vyjayanthi Chari and Sergei Loktev.
\newblock Weyl, {D}emazure and fusion modules for the current algebra of
  {$\mathfrak{sl}_{r+1}$}.
\newblock {\em Adv. Math.}, 207(2):928--960, 2006.

\bibitem[CM04]{CM04}
Vyjayanthi Chari and Adriano~A. Moura.
\newblock Spectral characters of finite-dimensional representations of affine
  algebras.
\newblock {\em J. Algebra}, 279(2):820--839, 2004.

\bibitem[CP86]{CP86}
Vyjayanthi Chari and Andrew Pressley.
\newblock New unitary representations of loop groups.
\newblock {\em Math. Ann.}, 275(1):87--104, 1986.

\bibitem[CP01]{CP01}
Vyjayanthi Chari and Andrew Pressley.
\newblock Weyl modules for classical and quantum affine algebras.
\newblock {\em Represent. Theory}, 5:191--223 (electronic), 2001.

\bibitem[FL04]{FL04}
B.~Feigin and S.~Loktev.
\newblock Multi-dimensional {W}eyl modules and symmetric functions.
\newblock {\em Comm. Math. Phys.}, 251(3):427--445, 2004.

\bibitem[FL07]{FoL07}
G.~Fourier and P.~Littelmann.
\newblock Weyl modules, {D}emazure modules, {KR}-modules, crystals, fusion
  products and limit constructions.
\newblock {\em Adv. Math.}, 211(2):566--593, 2007.

\bibitem[Her10]{Her10}
David Hernandez.
\newblock Kirillov-{R}eshetikhin conjecture: the general case.
\newblock {\em Int. Math. Res. Not. IMRN}, (1):149--193, 2010.

\bibitem[Kod10]{Kod10}
Ryosuke Kodera.
\newblock Extensions between finite-dimensional simple modules over a
  generalized current {L}ie algebra.
\newblock {\em Transform. Groups}, 15(2):371--388, 2010.

\bibitem[Lau10]{Lau10}
Michael Lau.
\newblock Representations of multiloop algebras.
\newblock {\em Pacific J. Math.}, 245(1):167--184, 2010.

\bibitem[Nak01]{N01}
Hiraku Nakajima.
\newblock Quiver varieties and finite-dimensional representations of quantum
  affine algebras.
\newblock {\em J. Amer. Math. Soc.}, 14(1):145--238, 2001.

\bibitem[Nao]{Na10}
Katsuyuki Naoi.
\newblock Weyl modules, {D}emazure modules and finite crystals for non-simply
  laced type.
\newblock arXiv:1012.5480.

\bibitem[NS]{NS11}
Erhard Neher and Alistair Savage.
\newblock Extensions and block decompositions for finite-dimensional
  representations of equivariant map algebras.
\newblock arXiv:1103.4367.

\bibitem[NSS]{NSS}
Erhard Neher, Alistair Savage, and Prasad Senesi.
\newblock Irreducible finite-dimensional representations of equivariant map
  algebras.
\newblock \emph{Trans. Amer. Math. Soc.} (to appear), arXiv:0906.5189.

\bibitem[VV02]{VV02}
M.~Varagnolo and E.~Vasserot.
\newblock Standard modules of quantum affine algebras.
\newblock {\em Duke Math. J.}, 111(3):509--533, 2002.

\end{thebibliography}

\end{document}